\documentclass[11pt,reqno,UTF8,twoside]{amsart}
\usepackage{mathrsfs}
\usepackage{amsfonts,amssymb,amsmath,amsthm}
\usepackage{cite}
\usepackage[colorlinks=true,citecolor=red]{hyperref}
\usepackage{titletoc}
\usepackage{geometry}
\usepackage{bm}
\usepackage{indentfirst}
\usepackage{graphicx}
\usepackage{float} 
\usepackage{booktabs}
\usepackage{longtable}
\usepackage{subfigure}
\usepackage{stmaryrd}
\usepackage{enumerate}
\usepackage{tikz}
\usepackage[normalem]{ulem}
\usepackage{color,soul}
\usepackage{setspace}
\usepackage{stmaryrd}
\usepackage[pagewise]{lineno} 
\allowdisplaybreaks[2]
\usepackage{appendix}
\usepackage{cases}
\usepackage{todonotes}
\usepackage{enumitem}

\numberwithin{equation}{section}
\newtheorem{theorem}{Theorem}[section]
\newtheorem{lemma}[theorem]{Lemma}

\newtheorem{proposition}[theorem]{Proposition}
\newtheorem{maintheorem}{Theorem}

\newtheorem*{motivation}{Motivation}

\newtheorem{remark}[theorem]{Remark}

\newcommand{\N}{\mathbb{N}}
\newcommand{\R}{\mathbb{R}}

\newcommand{\E}{\mathbb{E}}

\newcommand{\Pb}{\mathbb{P}}

\newcommand{\dvg}{\operatorname{div}}
\newcommand{\curl}{\operatorname{curl}}

\newcommand{\law}{\operatorname{Law}}
\newcommand{\dist}{\operatorname{dist}}
\newcommand{\runum}[1]{\romannumeral #1}
\hypersetup{linkcolor=blue}

    \makeatletter
    \@namedef{subjclassname@2020}{\textup{2020} Mathematics Subject
	Classification}
    \makeatother

\begin{document}

     \title[ Mixing for Navier--Stokes system ]
	{{\Large E{\MakeLowercase{xponential mixing for the stochastic} N{\MakeLowercase{avier--}S{\MakeLowercase{tokes equation}\\
    \vspace{1mm}
    \MakeLowercase{ with localized noise}}}}
    }}

    \author[Z. Liu, S. Xiang, Z. Zhang]{ {\small Z\MakeLowercase{iyu} L\MakeLowercase{iu}, S\MakeLowercase{hengquan} X\MakeLowercase{iang}, Z\MakeLowercase{hifei} Z\MakeLowercase{hang}}}
    
    \address[Ziyu Liu]{School of Mathematics and Physics, University of Science and Technology Beijing, 100083, Beijing, China.}
    \email{ziyu@ustb.edu.cn}

    \address[Shengquan  Xiang]{School of Mathematical Sciences, Peking University, 100871, Beijing, China.}
    \email{shengquan.xiang@math.pku.edu.cn}

    \address[Zhifei  Zhang]{School of Mathematical Sciences, Peking University, 100871, Beijing, China.}
    \email{zfzhang@math.pku.edu.cn}
 
    \begin{abstract}
     This paper studies the 2D Navier--Stokes equation on a bounded domain with Navier-slip boundary conditions, driven by spatially localized stationary forcing generated by a stochastic heat equation.  We prove exponential mixing for the associated Navier--Stokes--heat system, and consequently exponential convergence of the velocity law under stationary forcing.
     
     The main difficulty is that the white noise is confined to a subdomain and reaches the velocity indirectly through the heat component. To address the degeneracy, the proof combines Malliavin calculus with PDE control theory. The key ingredient is a stabilization scheme that converts localized Navier--Stokes controls into time-regular controls compatible with the heat dynamics.

    \end{abstract}

    \subjclass[2020]{
    37A25,  
    60H15, 
    60H07, 
    93B05,  
    93C20. 
    }
	
        \keywords{Exponential mixing; Navier--Stokes equation; localized white noise; controllability; Malliavin calculus}

    \maketitle
    \setcounter{tocdepth}{1}
    \tableofcontents

    \section{Introduction}\label{Sec 1}

    \subsection{Background and motivation}\label{Sec 1.1}

    Ergodicity and mixing for SPDEs, central to the statistical description of physical systems, have been extensively studied in recent decades. They are by now well understood when all determining modes are directly forced, but much less so when the noise reaches them only indirectly.

    In \cite{HM-06,HM-08,HM-11b}, Hairer and Mattingly studied ergodicity and mixing for the 2D stochastic Navier--Stokes equations driven by highly degenerate white noise in Fourier space, including configurations with only four directly forced modes. In the study of mixing for Navier--Stokes equations driven by random forcing, Shirikyan considered another direction, where the noise is localized in physical space and bounded in time \cite{Shi-15,Shi-21}. Along this line, the authors and their collaborators recently established mixing results for dispersive equations driven by spatially localized bounded noise \cite{LWXZZ-24,CXZZ-25,CXZZ-26}.

    \vspace{0.6em}

    In contrast, much less is known for SPDEs driven by spatially localized white noise, where the white-in-time forcing acts only on a proper subdomain. In particular, establishing ergodicity and mixing for the Navier–Stokes system under such forcing is quite challenging. Motivated by this problem, the authors recently took a first step in this direction by proving exponential mixing for the 1D stochastic Allen–Cahn equation with localized white noise \cite{LXZ-26}. 

    \begin{motivation}
    Turbulence and statistical properties of the Navier--Stokes equation with localized white noise.
    \end{motivation}

    \vspace{0.3em}

    In the present paper, we study the 2D Navier--Stokes equation on a bounded domain with Lions-type Navier-slip boundary conditions, driven by a spatially localized stationary force generated by a stochastic heat equation. We establish exponential mixing for the resulting Markovian Navier--Stokes--heat system and, consequently, exponential convergence of the velocity law. Our approach combines stochastic analysis and PDE control theory, with quantitative stabilization of the coupled linearized system providing the key link between localized forcing and asymptotic regularization.

    \subsection{Main results}\label{Sec 1.2}
    Let $D\subset\R^2$ be a bounded simply connected domain with smooth boundary, and let $\omega\Subset D$ be a non-empty smooth subdomain. We consider the stochastic Navier--Stokes equation on $D$ with Navier-slip boundary conditions and a localized random force, which reads
	\begin{equation}\label{eq NS}
	\begin{cases}
	\partial_tu-\nu\Delta u+(u\cdot\nabla)u+\nabla\pi=\overline{\xi}(t,x),\quad x\in D,\;t>0,\\
	\dvg u=0,\\
	u\cdot n=0,\quad\curl u=0,\quad x\in\partial D,\\
	u(0,\cdot)=u_0(\cdot).
	\end{cases}
	\end{equation}
    Here $\nu>0$ is the viscosity, $n$ is the outward unit normal on $\partial D$, and $\overline{\xi}$ denotes the zero extension to $D$ of a random field $\xi$ on $\omega$ given by
	\begin{equation}\label{eq xi}
	\overline{\xi}(t,x)=\begin{cases}
	\xi(t,x),\quad&x\in\omega,\\
	0,\quad&x\in D\setminus\omega,
	\end{cases}
	\end{equation}
    where $\xi(t,x)$ is a random process, as specified below.
    
    \vspace{0.6em}

     We consider the Navier--Stokes system \eqref{eq NS} on $\mathcal H^1$-space, which is given by
	\begin{equation*}
	\mathcal H:=\left\{u\in L^2(D;\R^2):\dvg u=0\ \text{in }D,\;u\cdot n=0\ \text{on }\partial D\right\},\quad \mathcal H^1:=H^1(D;\R^2)\cap \mathcal H,
	\end{equation*}
    endowed with the usual $L^2$-norm $\|\cdot\|$ and $H^1(D;\R^2)$-norm, respectively.

    \vspace{0.6em}

    To formulate the localized noise structure, let $\{\psi_j\}_{j\in\N^+}\subset H_0^1(\omega;\R^2)$ be an orthonormal basis of $L^2(\omega;\R^2)$ consisting of eigenvectors of the Dirichlet Laplacian on $\omega$, namely
	\begin{equation*}
	-\Delta_{\omega}\psi_j=\lambda_j\psi_j,\quad\psi_j|_{\partial\omega}=0,\quad j\in\N^+,
	\end{equation*}
    where $\{\lambda_j\}_{j\in\N^+}$ are the corresponding eigenvalues satisfying $0<\lambda_1\leq\lambda_2\leq\cdots$ and  $\lambda_j\rightarrow\infty$. The Hilbert space $L^2(\omega;\R^2)$ is identified with a subspace of $L^2(D;\R^2)$ by extending functions by zero outside $\omega$.

    \vspace{0.3em}
    The localized stationary noise is then defined as follows:
	\begin{equation}\label{eq eta_s}
	\eta^{s}(t,x):=\sum_{j\in\N^+}b_j\int_{-\infty}^{t}e^{-\lambda_j(t-s)}d\hat\beta_j(s)\psi_j(x),\quad t\in\R,
	\end{equation}
    where $b_j$ are real numbers satisfying $\sum_{j\in\N^+}b_j^2<\infty$, and $\{\hat\beta_j(t)\}_{j\in\N^+}$ is a sequence of independent two-sided standard Brownian motions defined on a filtered probability space.

    \vspace{0.6em}

    Our main result concerning exponential mixing under localized stationary noise is as follows.    

    \begin{maintheorem}\label{thm 1}
    For the equation \eqref{eq NS},\eqref{eq xi} with $\xi=\eta^{s}$ given by \eqref{eq eta_s}, and any $B_0>0$, there exists an integer $N=N(B_0)\in\N^+$ such that the following holds. If $\{b_j\}_{j\in\N^+}$ satisfies
	\begin{equation*}
	\sum_{j\in\N^+}b_j^2\leq B_0\quad\text{and}\quad b_j\neq0\quad\text{for }1\leq j\leq N,
	\end{equation*}
    then there exists a probability measure $\mu$ on $\mathcal H^1$ and constants $C,\alpha,\gamma>0$ such that
	\begin{equation}\label{eq mixing1}
	\left|\E\varphi(u(t,u_0))-\int_{\mathcal H^1}\varphi d\mu\right|\leq Ce^{-\alpha t}\exp\left(C\gamma\|u_0\|_{\mathcal H^1}^2\right)\|\varphi\|_{\mathcal H^1,\gamma}
	\end{equation}
    for any $\varphi\in\mathcal{O}_{\mathcal H^1,\gamma}$, $u_0\in \mathcal H^1$ and $t\geq0$, where
	\begin{equation*}
	\mathcal{O}_{\mathcal H^1,\gamma}=\left\{\varphi\in C^1(\mathcal H^1):\|\varphi\|_{\mathcal H^1,\gamma}<\infty\right\},\quad\|\varphi\|_{\mathcal H^1,\gamma}=\sup_{u\in \mathcal H^1}\left(e^{-\gamma\|u\|_{\mathcal H^1}^2}(|\varphi(u)|+\|\nabla\varphi(u)\|_{\mathcal H^1})\right).
	\end{equation*}
    Moreover, there exists a  stationary solution $u^s$ of equation \eqref{eq NS} such that the pair $(u^s,\eta^s)$ is jointly stationary and
	\begin{equation}\label{eq stationary}
	\law(u^s(t))=\mu\quad\forall\,t\in\R.
	\end{equation}
    \end{maintheorem}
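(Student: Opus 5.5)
We plan to lift the problem to a Markovian system. The stationary forcing $\eta^s$ is the stationary solution of the localized stochastic heat equation
\begin{equation*}
\partial_t\eta-\Delta_\omega\eta=\sum_{j}b_j\psi_j\,\dot{\hat\beta}_j,\qquad \eta|_{\partial\omega}=0 .
\end{equation*}
The pair $(u,\eta)$ solving \eqref{eq NS} with $\xi=\eta$ together with this heat equation is then a Markov process on $\mathcal H^1\times H_0^1(\omega)$; we work in a slightly smoother space for $\eta$ if needed. We will prove exponential mixing for this Navier--Stokes--heat semigroup $P_t$ in the weighted norm $\|\cdot\|_{\gamma}$. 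The statement of Theorem \ref{thm 1} then follows in two steps. First, we start the heat component from its Gaussian invariant law, which is independent of $u_0$, and note that the marginal on $u$ of the invariant measure of the joint system gives $\mu$. Second, the stationary pair $(u^s,\eta^s)$ is obtained by the usual pullback construction: start at time $-T$ and let $T\to\infty$, using the contraction estimate.

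\textbf{Ingredients for mixing.} We will follow the Hairer--Mattingly framework, which needs three inputs.

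(i) \emph{Lyapunov-type estimates.} Using enstrophy ($\curl u$) estimates, which are valid under the Navier-slip condition because the vorticity vanishes on $\partial D$, we show
\begin{equation*}
\E\exp\bigl(\gamma\|u(t)\|_{\mathcal H^1}^2\bigr)\le C e^{C\gamma\|u_0\|^2_{\mathcal H^1}e^{-\kappa t}},
\end{equation*}
together with exponential moments of $\int_0^t\|u\|_{H^2}^2$, for $\gamma$ small depending on $B_0$.

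(ii) \emph{Asymptotic strong Feller gradient estimate.} We aim for
\begin{equation*}
\|\nabla P_t\varphi(z)\|\le C e^{\gamma\|z\|^2}\Bigl(\sqrt{P_t|\varphi|^2(z)}+e^{-\alpha t}\sup\|\nabla\varphi\|_\gamma\Bigr).
\end{equation*}

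(iii) \emph{Weak irreducibility.} Dissipation drives every solution, with positive probability, near $0$ when the noise is small. A standard coupling or Harris-type argument then combines (i)--(iii) into \eqref{eq mixing1}.

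\textbf{Gradient estimate (main obstacle).} For a perturbation direction $h=(h_u,h_\eta)$ we look for a Malliavin control $v\in L^2(0,\infty;\R^N)$ acting through the first $N$ noise modes, which is allowed because $b_j\ne0$ for $j\le N$. We require that the remainder $\rho_t=J_{0,t}h-\mathcal A_{0,t}v$ decays exponentially, with $\E\int|v|^2$ controlled. The noise enters only $\eta$, and $\eta$ enters $u$ through $\overline{\eta}$. So $v$ must produce a heat trajectory $\zeta$ on $\omega$ which in turn serves as a localized control $f=\zeta$ for the linearized Navier--Stokes equation. We construct it in three steps.

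1. Use known localized (interior) null controllability or exponential stabilization of the linearized Navier--Stokes equation with Navier-slip conditions. Since the linearization around a random trajectory changes at every step, we apply it on successive unit time intervals with Carleman-type cost bounds that depend on $\exp(C\int\|u\|_{H^2}^2)$. This yields a control $f$ supported in $\omega$ that kills the $u$-part of $\rho$ up to a small error.

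2. Make $f$ time-regular and compatible with the heat dynamics. We require $f\in H^1_t$, $f(0)$ matching, and values in $\mathrm{span}\{\psi_j\}_{j\le N}$ up to a high-frequency error that the dissipation absorbs. Then we set $v_j=b_j^{-1}(\dot f_j+\lambda_j f_j)$. For this we need a regularized stabilization scheme: smooth the control in time and project onto $N$ modes, with $N=N(B_0)$ chosen so that the truncation error is dominated by the viscosity-driven contraction. We then check that the cumulative error still contracts geometrically.

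3. Handle the $\eta$-component of $h$ and the heat remainder, which decay on their own like $e^{-\lambda_1 t}$.

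The resulting cost bound is $\E\|v\|^2\le C e^{C\gamma\|z\|^2}$. To handle the adaptedness of $v$, we use the standard Skorokhod-integral bound via the $\mathcal H^1$ exponential moments from (i).

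We expect step 2, converting a rough localized control into a time-regular, finite-mode control that is consistent with the heat equation while keeping the stabilization rate uniform, to be the main difficulty.
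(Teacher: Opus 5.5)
Your architecture matches the paper's. You use the Markovian lift, the Hairer--Mattingly criterion (Lyapunov bound, asymptotic strong Feller, irreducibility), and a Malliavin control on the first $N$ heat modes. You also lift a localized Navier--Stokes control via $v_j=b_j^{-1}(\dot f_j+\lambda_jf_j)$, which is exactly the paper's $h=(\partial_r-\Delta_\omega)\widehat\zeta_{N,\delta}$.

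\textbf{The gap.} It lies in the step you call standard: using the constructed control itself as the Malliavin control. Because $v$ is anticipating, the Skorokhod bound needs $\E\|v\|_H^2+\E\|\mathcal Dv\|_{H\otimes H}^2$. Your $v$ depends on the random trajectory through a Carleman-weighted minimization whose parameters $\lambda_g,s_g$ are functions of $\|u\|_{L^\infty}$ and $\|\partial_tu\|_{L^2L^4}$, and you give no formula or bound for $\mathcal Dv$.

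Moreover, the deterministic control costs are not of the form $\exp(C\int\|u\|_{\mathcal H^2}^2)$. Bounds of that type, with an arbitrarily small constant in the exponent, hold for the Jacobian (Lemma~\ref{lemma J1}), not for control costs. In the paper the cost is $\mathcal C_g=C\exp(\exp(CG_1^{20/3}+\exp(CG_0^2)))$, which has no finite expectation, since only small exponential moments of $\|u\|^2$ are available. Hence $\E\|v\|^2\le Ce^{C\gamma\|z\|^2}$ does not follow. Neither does ``choose $N=N(B_0)$ so that the truncation error is dominated'': the error $\mathcal C_g\lambda_N^{-1/4}$ is trajectory dependent and unbounded for every fixed $N$.

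\textbf{The missing idea.} The paper uses a second controllability--observability duality. It uses the lifted control only deterministically, for a fixed reference field $g$, to prove the weak observability inequality of Proposition~\ref{prop obs2}. The actual Malliavin control is the Tikhonov-regularized Gramian control $v_n=\mathcal A_n^*(\mathcal M_n+\beta)^{-1}\check{\mathcal J}_n\hat{\mathcal J}_n\rho_{2n}$ on $[2n+1,2n+2]$. The system evolves freely on $[2n,2n+1]$; this is also what makes $Y_n=\|u\|_{L^\infty}+\|\partial_tu\|_{L^2L^4}$ on the control interval controllable by $\|U_{2n}\|_{\mathcal X}$ (Lemma~\ref{NS estimate}), a point your ``successive unit intervals'' skip.

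Since $\|\beta(\mathcal M_n+\beta)^{-1}\|\le1$ and $\|\mathcal A_n^*(\mathcal M_n+\beta)^{-1/2}\|\le1$, this control satisfies, almost surely and independently of $\mathcal C_g$, $\|\rho_{2n+2}\|_{\mathcal X}\le K_n\|\rho_{2n}\|_{\mathcal X}$ and $\|v_n\|_H\le\beta^{-1/2}K_n\|\rho_{2n}\|_{\mathcal X}$. Its Malliavin derivative is explicit via $\mathcal J^{(2)}$ and is bounded by powers of $\exp(\theta\int_{2n}^{2n+2}\|u_r\|_{\mathcal H^2}^2dr)$ with $\theta$ arbitrarily small. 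The huge constant $\mathcal C_g$ enters only through Theorem~\ref{thm control4} on the good event $\{Y_n\le M\}$. The bad event is handled by the rough a.s.\ bound together with the conditional tail of $Y_n$. The parameters are then fixed in the order $R,M,N,\beta$ (Lemma~\ref{lemma ASF2}).

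\textbf{Other points.} For your step 2, mollifying a null control in time destroys exact nullity. The paper instead gets $H_0^1(0,1;L^2)\cap L^2(0,1;H^2)$ regularity directly, as $\zeta=-\Theta_g\varphi$ from a weighted Fenchel--Rockafellar problem whose weight vanishes to infinite order at the endpoints. The spatial $H^1$ regularity is what yields the $\lambda_{N+1}^{-1/4}$ truncation error.

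Finally, the Hairer--Mattingly contraction acts on laws, so your pullback limit exists only in law and must be glued to $\eta^s$ on an enlarged space. The paper does this via the disintegration $\mu_*(du,d\eta)=p_\eta(du)\mu_s(d\eta)$.
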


    The forcing $\eta^s$ in Theorem~\ref{thm 1} is stationary but temporally correlated, and hence the velocity component alone does not form a Markov process. It is therefore natural to include the dynamics generating the forcing as an additional component of the state space. Such Markovian extensions provide a useful way to study systems driven by colored  noise; see, e.g. \cite{HO-07,KS-25}.

    \vspace{0.3em}
    More precisely, $\eta^s$ is the stationary realization of the stochastic heat equation on $\omega$ given by
	\begin{equation}\label{eq eta}
	\begin{cases}
	d\eta-\Delta_{\omega}\eta dt=\displaystyle\sum_{j\in\N^+}b_j\psi_j(x)d\beta_j(t),\quad x\in\omega,\;t>0,\\
	\eta|_{\partial\omega}=0,\\
	\eta(0,\cdot)=\eta_0(\cdot),
	\end{cases}
	\end{equation}
    where $b_j$ are real numbers, and $\{\beta_j(t)\}_{j\in\N^+}$ is a sequence of independent standard Brownian motions defined on a filtered probability space $(\Omega,\mathcal{F},\{\mathcal{F}_t\}_{t\geq0},\Pb)$.

    \vspace{0.3em}
    We therefore consider the Markovian lift $U=(u,\eta)$ of \eqref{eq NS},\eqref{eq eta} on phase space 
	\begin{equation*}
	\mathcal X:=\mathcal H^1\times L^2(\omega;\R^2),\quad\|U\|_{\mathcal X}^2=\|(u,\eta)\|_{\mathcal X}^2=\|u\|_{\mathcal H^1}^2+\|\eta\|^2,
	\end{equation*}
    where the $L^2$-norm on $\omega$ is denoted by $\|\cdot\|$.

    \vspace{0.6em}
    Theorem \ref{thm 1} follows from the following stronger mixing result for the Markovian lift $(u,\eta)$.

    \begin{maintheorem}\label{thm 2}

    For the equation given by \eqref{eq NS},\eqref{eq xi} with $\xi=\eta$ given by \eqref{eq eta}, and any $B_0>0$, there exists an integer $N=N(B_0)\in\N^+$ such that the following holds. If $\{b_j\}_{j\in\N^+}$ satisfies
	\begin{equation*}
	\sum_{j\in\N^+}b_j^2\leq B_0\quad\text{and}\quad b_j\neq0\quad\text{for }1\leq j\leq N,
	\end{equation*}
    then the Markov process associated to the coupled system \eqref{eq NS},\eqref{eq eta} admits a unique invariant measure $\mu_*$ on $\mathcal X$.  Moreover, there exist constants $C,\alpha,\gamma>0$ such that
	\begin{equation}\label{eq mixing2}
	\left|\E\varphi(U(t,U_0))-\int_{\mathcal X}\varphi d\mu_*\right|\leq Ce^{-\alpha t}\exp\left(C\gamma\|U_0\|_{\mathcal X}^2\right)\|\varphi\|_{\mathcal X,\gamma}
	\end{equation}
    for any $\varphi\in\mathcal{O}_{\mathcal X,\gamma}$, $U_0\in \mathcal X$ and $t\geq0$, where
	\begin{equation*}
	\mathcal{O}_{\mathcal X,\gamma}=\left\{\varphi\in C^1(\mathcal X):\|\varphi\|_{\mathcal X,\gamma}<\infty\right\},\quad\|\varphi\|_{\mathcal X,\gamma}=\sup_{U_0\in \mathcal X}\left(e^{-\gamma\|U_0\|_{\mathcal X}^2}(|\varphi(U_0)|+\|\nabla\varphi(U_0)\|_{\mathcal X})\right).
	\end{equation*}
    \end{maintheorem}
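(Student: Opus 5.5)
We would prove Theorem~\ref{thm 2} by the asymptotic strong Feller plus weak irreducibility scheme of Hairer--Mattingly, in its quantitative form giving exponential mixing in a Wasserstein-type distance weighted by $\exp(\gamma\|U\|_{\mathcal X}^2)$. The proof has three ingredients: exponential moment bounds, a gradient estimate for the Markov semigroup $P_t$, and a contraction/irreducibility property. These are then combined through the now-standard abstract result, which upgrades a gradient estimate plus a ``$d$-small'' set condition to spectral-gap-type decay for observables in $\mathcal O_{\mathcal X,\gamma}$.

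\textbf{Step 1 (a priori estimates).} We apply It\^o's formula to $\|\eta\|^2$ and to $\|\curl u\|^2$. The vorticity formulation is available because the Navier-slip condition $\curl u=0$ gives a Dirichlet condition on the vorticity. This yields
\[
\E\exp\Big(\gamma\|U(t)\|_{\mathcal X}^2+\gamma\kappa\int_0^t\|U\|_{\mathcal X_1}^2ds\Big)\le C e^{\gamma e^{-\kappa t}\|U_0\|^2_{\mathcal X}+C\gamma t}
\]
for $\gamma$ small depending on $B_0$, where $\mathcal X_1$ denotes the next-order regularity space. Existence of an invariant measure then follows from Krylov--Bogoliubov. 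For irreducibility, we use that with zero noise the system dissipates to $0$: $\eta$ decays and the NS energy decays under slip conditions on a simply connected domain. A support argument then gives that every neighbourhood of $0$ is reached with positive probability, uniformly on bounded sets.

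\textbf{Step 2 (gradient estimate via Malliavin calculus and control).} For a direction $\zeta=(\zeta^u,\zeta^\eta)$ we seek a control $h\in L^2(0,\infty;\ell^2)$, adapted in the Malliavin sense, acting through $\sum_j b_j h_j\psi_j$ in the heat equation only. We require the residual $\rho(t)=J_{0,t}\zeta-\mathcal A_{0,t}h$ to satisfy $\E\|\rho(t)\|_{\mathcal X}\le Ce^{-\alpha t}e^{C\gamma\|U_0\|^2}\|\zeta\|$ and $\E\int|h|^2<\infty$ with the same weight. Integration by parts then gives
\[
|\nabla P_t\varphi(U_0)\zeta|\le C e^{C\gamma\|U_0\|^2}\big(\|\varphi\|_\infty+e^{-\alpha t}\|\nabla\varphi\|_\infty\big)\|\zeta\|.
\]
We would build $h$ on successive unit intervals $[n,n+1]$ by a stabilization/control argument for the linearized coupled system. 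In the heat component, the low modes $j\le N$ are controlled directly since $b_j\neq0$. The high modes of both components are damped by dissipation once $N$ is large relative to the Lyapunov-type bound $\int\|U\|^2_{\mathcal X_1}$, which is where $N=N(B_0)$ enters. The velocity component is the genuinely degenerate one: the linearized NS can be driven to a small residual only by a force $f$ supported in $\omega$, and $f$ must be realized as the output $\eta_h$ of the controlled heat equation.

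\textbf{Main obstacle.} The crux is converting a localized NS control $f\in L^2(0,1;L^2(\omega))$ into one of the form $\eta_h$. Since $h=\partial_t\eta_h-\Delta_\omega\eta_h$ projected onto the first $N$ modes, we need $f$ to be $H^1$ in time, valued in $H^2\cap H^1_0(\omega)$, and essentially finite-dimensional. Our plan is to use a Carleman-based null/approximate controllability of linearized NS with Navier-slip conditions and localized control, with smooth cutoff weights in time so that $f$ vanishes near $t=0$ and $t=1$. We would then regularize $f$ in time by a stabilization scheme: a feedback control that keeps $f$ time-smooth, followed by Galerkin truncation onto $\mathrm{span}\{\psi_j\}_{j\le N}$. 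The truncation error is treated as a small perturbation absorbed by exponential stability, with $N$ chosen large. The resulting $h$ has norm bounded by $\exp(C\int\|U\|^2_{\mathcal X_1})$, and it is tamed using the exponential moments of Step 1 after choosing $\gamma$ small. Finally, Steps 1--2 together with the irreducibility of Step 1 feed into the abstract exponential-mixing theorem to give uniqueness of $\mu_*$ and \eqref{eq mixing2}.
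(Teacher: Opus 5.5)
Your overall architecture matches the paper's. You use the Hairer--Mattingly criterion (Theorem~\ref{thm HM}) with exponential moments from the vorticity/heat Lyapunov functional. You get irreducibility by comparing with the dissipative unforced flow and using Gaussian support. For the gradient estimate you take a Carleman-based localized null control for linearized Navier--Stokes with endpoint-vanishing time weights, truncate it by $\mathsf P_N$, and lift it through the heat equation via $h=(\partial_t-\Delta_\omega)\zeta$.

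\textbf{The gap: the control fed into the Malliavin integration by parts.} You use the explicitly constructed $h$ itself and bound its cost by $\exp(C\int\|U\|_{\mathcal X_1}^2)$, to be tamed by the exponential moments of Step~1. The Carleman route does not give such a bound, and three problems follow.
\begin{itemize}
\item[(i)] \emph{Cost not integrable.} Along $g=u|_{[n,n+1]}$ the parameters must satisfy $\lambda\gtrsim\exp(C\|g\|_{L^\infty}^2)\bigl(1+\|g\|_{L^\infty}^{10/3}+\|\partial_tg\|_{L^2(L^4)}^{10/3}\bigr)$ and $s\gtrsim e^{8\lambda K}$. The control cost is therefore of the form $C\exp\bigl(\exp(CG_1^{20/3}+\exp(CG_0^2))\bigr)$, with $G_0=\|u\|_{L^\infty}$ and $G_1=\|\partial_tu\|_{L^2(L^4)}$. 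Moreover, $G_1$ has only polynomial moments. No small $\gamma$ makes this integrable, since the exponent is set by the observability constant, not by you.
\item[(ii)] \emph{$N$ would be random.} The level at which the residual $\mathcal C_g\lambda_{N+1}^{-1/4}$ becomes small depends on the path, whereas $N=N(B_0)$ must be deterministic. On paths where it is not small, your control of size $\mathcal C_g\|\rho\|$ can amplify $\rho$ rather than damp it.
\item[(iii)] \emph{Anticipating control.} A control on $[n,n+1]$ built from the linearization along $u|_{[n,n+1]}$ uses the future of the path. Then $\E\,\varphi(U_t)\int h\,dW$ is a Skorokhod integral, whose second moment needs bounds on $\mathcal Dh$. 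A HUM control followed by cut-offs and Galerkin truncation gives no usable formula for $\mathcal Dh$. A quantitative adapted feedback law for this degenerate localized coupled system is also not available.
\end{itemize}

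\textbf{The missing idea: a second use of duality.} In the paper the explicit control is used only deterministically, to prove the weak observability inequality of Proposition~\ref{prop obs2}. The control actually used is the regularized Malliavin-matrix control $v_n=\mathcal A_n^*(\mathcal M_n+\beta)^{-1}\check{\mathcal J}_n\hat{\mathcal J}_n\rho_{2n}$, applied on every other unit interval; the uncontrolled interval supplies the bounds on $\|u\|_{L^\infty}$ and $\|\partial_tu\|_{L^2(L^4)}$. This control has three properties yours lacks.
\begin{itemize}
\item[(a)] It satisfies $\|v_n\|_H\le\beta^{-1/2}K_n\|\rho_{2n}\|_{\mathcal X}$ and $\|\rho_{2n+2}\|_{\mathcal X}\le K_n\|\rho_{2n}\|_{\mathcal X}$ pathwise, with no controllability constant. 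Here $K_n$, the supremum of the Jacobian norms, has all conditional moments with arbitrarily small exponential weight.
\item[(b)] Its Malliavin derivative is explicit in terms of $\mathcal J$ and $\mathcal J^{(2)}$.
\item[(c)] The contraction $\|\rho_{2n+2}\|_{\mathcal X}\le\mathcal C_M\bigl(\widehat b_N\lambda_N\beta^{1/2}+\lambda_N^{-1/4}\bigr)K_n\|\rho_{2n}\|_{\mathcal X}$ from Theorem~\ref{thm control4} is needed only on the good event $\{Y_n\le M\}$. Here $Y_n=\|u\|_{L^\infty}+\|\partial_tu\|_{L^2(L^4)}$ on $[2n+1,2n+2]$, and $\mathcal C_M$ is the resulting deterministic constant.
\end{itemize}
The conditional fourth-moment bound of Lemma~\ref{lemma ASF2} then fixes $R$, then $M$, then $N$, then $\beta$. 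This is how the huge Carleman cost is confined to a deterministic constant and $N$ comes to depend only on $B_0$. Without this replacement, your Step~2 does not yield hypothesis (2) of Theorem~\ref{thm HM}.
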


   Let us mention that, to the best of our knowledge, Theorem~\ref{thm 2} provides a first exponential mixing result for a Navier--Stokes system in which white-in-time perturbation is injected only through a localized physical region. In our setting, the noise acts on finitely many modes of the local heat component and is transmitted into the velocity indirectly. The result can be viewed as our first step toward the mixing problem for Navier--Stokes equations driven directly by spatially localized white noise.  We recently learned that Hernández-Santamaría, Le Balc'h and Peralta are independently investigating related questions from a different perspective.

    Additionally, as this paper focuses on the 2D Navier--Stokes equation on a bounded domain with Navier-slip boundary conditions, we mention that the approach could be adapted to other systems with localized white noise, as well as to other noise models such as boundary or Lévy-type noise. The mixing results and underlying methods may also be useful for studying other statistical properties, e.g. large deviations.

    \subsection{Obstructions and ingredients}\label{Sec 1.3}
    
    Building on the approach in our earlier work \cite{LXZ-26,LX-26}, our strategy for proving Theorem~\ref{thm 2} combines a general probabilistic criterion for exponential mixing with a quantitative stabilization result for the coupled linearized system. 
    \vspace{0.6em}

    \noindent{1.3.0. \it Probabilistic framework.}

    \vspace{0.3em}

    The general criterion for exponential mixing that we apply is due to Hairer and Mattingly \cite{HM-06,HM-08,HM-11b}, i.e. Theorem~\ref{thm HM}. This framework reduces mixing to verifying three conditions:
    \begin{itemize}
    \item[(\runum{1})] the existence of a suitable Lyapunov function;
    \item[(\runum{2})] a weak form of irreducibility;
    \item[(\runum{3})] the asymptotic strong Feller property.
    \end{itemize}
    Among these, the last property contributes to the main challenges in the present setting, due to both the localized nature of the noise and the nonlinear turbulence in the Navier--Stokes dynamics.

    \vspace{0.6em}

    \noindent{1.3.1. \it Two obstructions to mixing.}

\vspace{0.3em}

\noindent {\bf Asymptotic strong Feller.}
The first difficulty is to establish the asymptotic strong Feller property. Indeed, such conditions are in many cases necessary for mixing; see e.g. \cite{GLLL-24,GLLL-25}.  Roughly speaking, this amounts to proving a gradient estimate of the form
\begin{equation*}
    \|\nabla P_t\varphi(U_0)\|_{\mathcal X}
    \leq C(\|U_0\|_{\mathcal X})
    \left(\|\varphi\|_{\infty}
    +\delta_t\|\nabla\varphi\|_{\infty}\right),
    \qquad \delta_t\to0,
\end{equation*}
for $C^1$-smooth observables $\varphi$. 

In classical settings, several mechanisms are available.  For systems driven by non-degenerate
white noise, one may establish the strong Feller property; see, e.g.  \cite{FM-95}.  When  the determining modes are directly forced, mixing can often be proved by combining Girsanov’s theorem with the
contractive effect of the dynamics; see, e.g. \cite{EMS-01,BKL-02,Mattingly-02}. For highly degenerate forcing, Hairer and Mattingly \cite{HM-06,HM-11b} developed a Malliavin-calculus approach exploiting the structure of the Navier--Stokes nonlinearity. 

However, in the present setting, the white noise acts only on the localized heat component and reaches the velocity indirectly, so these mechanisms cannot be applied directly. Following the approach established in our earlier work \cite{LXZ-26,LX-26}, we combine Malliavin calculus with PDE control theory.   This reduces asymptotic regularization to a stabilization problem for the coupled linearized system, which eventually leads to a weak observability inequality.

\vspace{0.6em}

\noindent {\bf Weak observability inequality.}
Let $Z=(w,q)$ denote  the solution of the adjoint coupled linearized system. The weak observability estimate required in our argument has the form
\begin{equation*}
    \|Z(0)\|_{\mathcal X}
    \leq C_N
    \|\mathsf P_N q\|_{L^2(0,1;L^2(\omega))}
    +\varepsilon_N\|Z(1)\|_{\mathcal X},
    \qquad \varepsilon_N\longrightarrow0 .
\end{equation*}
For linearized Navier--Stokes equations, local observability is usually derived from Carleman estimates; see e.g. \cite{Guerrero-06,FGIP-04}.  However, the situation here is substantially more involved. Indeed, the observation is available only through the heat component $q$ on the localized region $\omega$. Moreover, only finitely many modes of $q$ are observed. Thus a standard Carleman estimate for the velocity equation does not directly provide the observation appearing above.

The main difficulty is to transfer the partial observation into control of the full coupled state. In particular, the unobserved high-frequency modes must be absorbed into the vanishing remainder $\varepsilon_N\|Z(1)\|_{\mathcal X}$. Moreover, the observability constant must retain ``regular enough'' quantitative dependence on the non-autonomous reference trajectory, which is essential for the subsequent stochastic argument. This requirement forms one of the major difficulties of the present work.

\vspace{0.6em}

\noindent{1.3.2. \it Ingredients of the proof.}

\vspace{0.6em}

\noindent {\bf Navier--Stokes with Navier-slip boundary conditions.}
A first feature of our setting is that the Navier--Stokes equation is posed on a {\it general smooth bounded domain} with Navier-slip boundary conditions. These conditions retain the geometry of a physical boundary while allowing tangential slip, and provide a natural setting for the analysis of viscous flows on bounded domains. In particular, closely related Navier-type boundary conditions have played an important role in the study of vanishing-viscosity limits and boundary layers problems; see, e.g. \cite{Kelliher-06, IS-11}.

From the control viewpoint, bounded domains bring additional difficulties since the boundary behavior has to be incorporated into the controllability analysis. Navier slip conditions provide a useful setting for treating these effects and have been considered in global controllability problems for the Navier--Stokes equation; see, e.g. \cite{CMS-20}. Related work toward the Lions problem with no-slip boundary conditions also highlights the difficulties caused by viscous boundary layers \cite{CMSZ-19}.

The bounded-domain setting thus provides a natural model for studying localized stochastic forcing in the presence of physical boundaries. A further direction is to treat the no-slip Dirichlet condition, where stronger boundary-layer effects make both the control and mixing problems more delicate.

\vspace{0.6em}

    \noindent {\bf Regularization via a coupled structure.}
    Our original motivation was to treat the Navier--Stokes equation driven directly by spatially localized white noise. Following our series of work \cite{LXZ-26,LX-26}, the main issue of asymptotic strong Feller is transformed into a weak observability for the linearized system, which is therefore approached by Carleman estimates. The main obstruction is the low temporal regularity of white noise, which makes the required control construction difficult.

    On the other hand, we observe that the temporal-regularity obstruction can be overcome by inserting an auxiliary dissipative dynamics between the control input and the velocity equation. Indeed, a closely related idea appears in closed-loop stabilization, where additional dynamics are introduced to obtain controls compatible with the evolution; see, e.g. \cite{CP-91,Xiang-18}.   Motivated by this connection, we incorporate a heat component into the mixing problem. A prescribed localized path $\zeta$ can then be realized through  $h=(\partial_t-\Delta_\omega)\zeta$,  which provides the temporal regularity required in the control construction.

    Meanwhile, the same coupled structure also has a natural probabilistic interpretation. The auxiliary heat equation generates a spatially localized stationary forcing for the Navier--Stokes equation. Such random forcing is widely used to produce statistically stationary turbulent flows; see, e.g. \cite{EP-88}. Thus the coupled formulation {\it naturally connects the control regularization with a physically relevant stationary-noise model}.
 
\vspace{0.3em}

\noindent {\bf Regularized stabilization along trajectory with localized force.}
A further ingredient is the {\it construction of regularized controls} for the coupled linearized system. The first regularization concerns the PDE control itself. Starting from a Carleman estimate for the linearized Navier--Stokes equation under Navier-slip boundary conditions \cite{Guerrero-06}, we use Fenchel--Rockafellar duality
    \begin{equation*}
	\inf_{h\in H_{0,1}}J(h)=\sup_{Z\in \mathcal X}J^*(Z)
	\end{equation*}
to construct localized controls with sufficient spatial and temporal regularity for the finite-mode truncation and subsequent lifting through the heat component.

The second regularization is variational and is tailored to the Malliavin argument. After deriving a weak observability inequality for the coupled system, we introduce a regularized inverse of the controllability Gramian and obtain the control
\begin{equation*}
    h=-\mathcal A_{N,g}^*
    (\mathcal G_{N,g}+\beta)^{-1}\mathcal J_gV_0 .
\end{equation*}
Besides providing quantitative stabilization, this explicit structure permits sufficiently precise estimates on the control and its Malliavin derivative, which are needed in the stochastic analysis.

The proof therefore uses controllability--observability duality in both directions. A Carleman estimate first yields observability and, via Fenchel--Rockafellar duality, a regular localized control. After finite-dimensional lifting, approximate controllability is converted back into weak observability and then into the regularized control above, providing the regularity and structure required by the Malliavin argument.

    \subsection{Review of the literature}\label{Sec 1.4}

    The study of ergodicity and mixing for randomly forced PDEs has been a central motivation for the development of ergodic theory for Markov processes. Significant progress has been made, particularly for the 2D Navier--Stokes equations. Early works addressing cases where all the determining modes are perturbed include e.g. \cite{EMS-01, KS-01, BKL-02, FM-95, Mattingly-02, MY-02,NZ-24}.   Hairer and Mattingly \cite{HM-06, HM-08} applied the Malliavin calculus to provide the first results for systems with a highly degenerate white noise, where they introduced the asymptotic strong Feller property. More recently, Kuksin, Nersesyan, and Shirikyan \cite{KNS-20} used a controllability approach to establish similar results for systems perturbed by bounded degenerate colored noise. Further developments in the study of extremely degenerate noise can be found in \cite{HM-11b, FGRT-15, KNS-20-1,PZZ-24}. In \cite{Shi-15, Shi-21}, Shirikyan proposed a controllability approach to investigate systems perturbed by physically bounded localized random forces.

    \vspace{0.6em}

    Another important issue addressed in this paper is the controllability of the Navier--Stokes equations, a topic that has been extensively studied over the past few decades. Specifically, we shall focus on global controllability and stability along trajectories.   Let us mention, e.g. \cite{EGG-16,FGIP-04,FI-96, CL-14,Guerrero-06,EGGP-12} for references on local controllability. In \cite{FGIP-04}, based on this linearization approach and the global Carleman estimate method introduced by Fursikov--Imanuvilov \cite{FI-96}, Fernández-Cara--Guerrero--Imanuvilov--Puel proved the local exact controllability of the Navier--Stokes equations. More recently, the second author introduced a constructive method to quantitative null control in \cite{Xiang-23, Xiang-24},  invoking spectral inequalities and frequency Lyapunov functions.  
    
    The problem of global controllability is less well-understood. In recent years, this method has been applied to make significant contributions to the Lions' problem \cite{CMS-20,CMSZ-19}. In \cite{AS-06}, Agrachev and Sarychev used the geometric control theory to derive global controllability on the torus. This result is further improved by Nersesyan and Rissel in \cite{NR-25} by combining the two above-mentioned methods.

    The study of local exponential stabilization of the Navier--Stokes equations has also yielded fruitful results. Notably, based on the Riccati method, various results have been achieved; see e.g. \cite{BT-04,BT-11} for local exponential stabilization with  internal controls, to \cite{BLT-06,Raymond-07}  with boundary controls.  We also refer to e.g. \cite{CX-21} for quantitative stabilization results for the Burgers equation. Finally, based on the frequency Lyapunov method \cite{Xiang-24}, the second author established quantitative rapid stabilization for the Navier--Stokes system \cite{Xiang-23}. Related stability questions for the Vlasov--Navier--Stokes system were studied in \cite{GHM-18}.

     \subsection*{Organization of the paper} 
     This paper is organized as follows. Section~\ref{Sec 2} collects probabilistic preliminaries and relevant notions from Malliavin calculus. Section~\ref{Sec 3} establishes quantitative stabilization estimates for the coupled linearized system. In Section~\ref{Sec 4}, we prove the asymptotic strong Feller property and irreducibility, and then apply the abstract mixing criterion to prove Theorem~\ref{thm 2} and derive Theorem~\ref{thm 1}. The Appendix contains auxiliary estimates and proofs.

    \section{Probabilistic setup and Malliavin calculus}\label{Sec 2}

    In this section, we first summarize the mathematical setup of the mixing problem for the equation \eqref{eq NS},\eqref{eq eta}. Additionally, we introduce the Malliavin calculus setting, which will play a crucial role in the subsequent analysis.

    \subsection{Well-posedness and Markovian framework}\label{Sec 2.1} To begin with, let us formulate system \eqref{eq NS},\eqref{eq eta} as an abstract evolution equation and define its associated Markovian framework.

    \vspace{0.3em}

    Recall that $\mathcal H$ and $\mathcal H^1$ are given by Section \ref{Sec 1.2}. Set
	\begin{equation*}
	\mathcal H^2=\{u\in H^2(D;\R^2)\cap \mathcal H^1:\curl u=0\ {\rm on}\ \partial D\}.
	\end{equation*}
    On $\mathcal H^2$ we use the standard Sobolev norm $H^2(D;\R^2)$. Let $\Pi$ be the Leray projection onto $\mathcal H$, and define
	\begin{equation*}
	A\colon D(A)=\mathcal H^2\subset \mathcal H\rightarrow \mathcal H,\quad Au=-\nu\Pi\Delta u,\quad B(u,v)=\Pi((u\cdot\nabla)v).
	\end{equation*}
    For the localized heat component, the zero extension from $\omega$ to $D$ will be used without further comment. The corresponding norms are given by
	\begin{equation*}
	\|\eta\|^2=\int_{\omega}|\eta(x)|^2dx,\quad\|\eta\|_{H^1(\omega)}^2=\int_{\omega}|\eta(x)|^2dx+\int_{\omega}|\nabla\eta(x)|^2dx.
	\end{equation*}

    With these notations, the coupled system \eqref{eq NS},\eqref{eq eta} can be written as an abstract evolution equation $U=(u,\eta)$ on $\mathcal X$:
    \begin{equation}\label{eq NS2}
	\begin{cases}
	\partial_tu+Au+B(u,u)=\Pi\,\overline{\eta},\\
	d\eta-\Delta_{\omega}\eta dt=\sum_{j\in\N^+}b_j\psi_jd\beta_j(t),\\
	U(0)=U_0=(u_0,\eta_0)\in \mathcal X.
	\end{cases}
	\end{equation}
    We say that $U=U(t,U_0)$ is a solution of \eqref{eq NS2} if it is $\mathcal F_t$-adapted,
	\begin{equation}\label{eq NS3}
	U\in C([0,\infty);\mathcal X),\quad u\in L^2_{loc}([0,\infty);\mathcal H^2),\quad\eta\in L^2_{loc}([0,\infty);H_0^1(\omega;\R^2))\quad a.s.,
	\end{equation}
    and satisfies \eqref{eq NS2} in the mild sense, that is,
	\begin{equation*}
	\begin{cases}
	u(t)=e^{-At}u_0-\displaystyle\int_0^te^{-A(t-r)}B(u(r),u(r))dr+\displaystyle\int_0^te^{-A(t-r)}\Pi\overline{\eta}(r)dr,\\
	\eta(t)=e^{t\Delta_{\omega}}\eta_0+\displaystyle\sum_{j\in\N^+}b_j\displaystyle\int_0^te^{(t-r)\Delta_{\omega}}\psi_jd\beta_j(r).
	\end{cases}
	\end{equation*}

    \vspace{0.3em}

    Using $\{\beta_j(t)\}_{j\in\N^+}$, let the cylindrical Wiener process $W$ on $L^2(\omega;\R^2)$ be given by 
	\begin{equation*}
	W(t)=\sum_{j\in\N^+}\beta_j(t)\psi_j.
	\end{equation*}
    Let $\mathsf{P}_N$ be the projection in $L^2(\omega;\R^2)$ onto the finite-dimensional subspace spanned by $\{\psi_j\}_{1\leq j\leq N}$ with $N\in\N^+$. We also set
	\begin{equation*}
	W_N(t)=\mathsf{P}_NW(t)=\sum_{1\leq j\leq N}\beta_j(t)\psi_j.
	\end{equation*}
    Note that $W_N\in C([0,\infty);\mathsf P_NL^2(\omega;\R^2))$ a.s., where
	\begin{equation*}
	\mathsf{P}_NL^2(\omega;\R^2):=\operatorname{span}\{\psi_j:1\leq j\leq N\}.
	\end{equation*}
    Identifying $\mathsf{P}_NL^2(\omega;\R^2)$ with $\R^N$, we may view $W_N$ as an element of $C([0,\infty);\R^N)$.

    \vspace{0.3em}

    The following proposition summarizes the basic well-posedness, regularity, and smoothness with respect to data and noise for the coupled system.

    \begin{proposition}\label{prop well-posed}
    For the coupled system given by \eqref{eq NS},\eqref{eq xi},\eqref{eq eta} with $\xi=\eta$, assume that the sequence $\{b_j\}_{j\in\N^+}$ satisfies $\sum_{j\in\N^+}b_j^2<\infty$. Then for any $U_0\in \mathcal X$, there exists a unique solution $U=(u,\eta)\colon[0,\infty)\times\Omega\rightarrow \mathcal X$ of the coupled system \eqref{eq NS},\eqref{eq eta}, which is $\mathcal{F}_t$-adapted and satisfies \eqref{eq NS3}. Moreover, for any $t\geq0$ and almost every realization of the noise $W(\cdot,\omega)$, the map $U_0\mapsto U(t,U_0)$ is Fr\'echet differentiable on $\mathcal X$. Meanwhile, for any $U_0\in \mathcal X$, $t\geq0$ and $N\in\N^+$, the map $W_N\mapsto U(t,U_0,W)$ is Fr\'echet differentiable from $C([0,t];\R^N)$ to $\mathcal X$.
    \end{proposition}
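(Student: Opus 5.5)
The plan is to decouple the system. The heat component $\eta$ solves a linear equation with additive noise and does not depend on $u$. The velocity equation is then a deterministic 2D Navier--Stokes equation with random forcing $\Pi\overline\eta$, which is treated pathwise.

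\emph{Step 1: the heat component.} Since $\sum_j b_j^2<\infty$, the stochastic convolution
\[
z(t)=\sum_{j} b_j\int_0^t e^{(t-r)\Delta_\omega}\psi_j\,d\beta_j(r)
\]
is well defined. Its mode $j$ has variance $b_j^2(1-e^{-2\lambda_j t})/(2\lambda_j)$. By the factorization method (Da Prato--Kwapie\'n--Zabczyk), $z\in C([0,\infty);L^2(\omega))$ a.s. The energy identity gives $\E\int_0^T\|\nabla z\|^2\,dt\le CT\sum_j b_j^2$, so $z\in L^2_{loc}H^1_0$ a.s. Adding $e^{t\Delta_\omega}\eta_0$ yields the unique adapted mild solution $\eta$ with the regularity required in \eqref{eq NS3}.

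\emph{Step 2: pathwise Navier--Stokes.} For a.e.\ $\omega$, $f:=\Pi\overline\eta\in L^2_{loc}([0,\infty);\mathcal H)$. The zero extension is only in $L^2$, not $H^1_0(D)$, but $L^2$ forcing is enough. I would solve the deterministic equation $\partial_t u+Au+B(u,u)=f$ with $u_0\in\mathcal H^1$ by Galerkin approximation in the eigenbasis of the Stokes operator with Navier-slip conditions, which is self-adjoint and positive on $\mathcal H$ for simply connected $D$. The estimates are:
\begin{itemize}
\item the $L^2$ energy estimate, using $\langle B(u,u),u\rangle=0$;
\item the enstrophy estimate, testing with $Au$.
\end{itemize}
In the enstrophy estimate one needs the key bound on $\langle B(u,u),Au\rangle$. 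With the boundary condition $\curl u=0$ one can pass to the vorticity $\curl u$, which vanishes on $\partial D$. Alternatively one uses Ladyzhenskaya's inequality to get $|\langle B(u,u),Au\rangle|\le C\|u\|^{1/2}\|u\|_{H^1}\|u\|_{H^2}^{3/2}$, and then applies Young and Gronwall. This gives bounds in $L^\infty H^1\cap L^2\mathcal H^2$, and by the Lions--Magenes lemma $u\in C([0,T];\mathcal H^1)$. Uniqueness follows from a Gronwall argument on the difference in $\mathcal H$. The mild and weak formulations agree by standard semigroup theory.

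\emph{Step 3: adaptedness.} The solution map $f\mapsto u$ is continuous from $L^2(0,T;\mathcal H)$ to $C([0,T];\mathcal H^1)$. Since $\eta|_{[0,t]}$ is $\mathcal F_t$-measurable, $u(t)$ is $\mathcal F_t$-measurable.

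\emph{Step 4: Fr\'echet differentiability in $U_0$.} The map $U_0\mapsto \eta(t)$ is affine and bounded, hence smooth. For $u$, fix a path and consider the linearized equation
\[
\partial_t v+Av+B(u,v)+B(v,u)=\Pi\overline{\zeta},\qquad \zeta(t)=e^{t\Delta_\omega}\zeta_0 .
\]
Its solution is estimated in $L^\infty\mathcal H^1\cap L^2\mathcal H^2$ with constants depending on $\|u\|_{L^\infty H^1\cap L^2H^2}$. The remainder $r=u(U_0+\epsilon)-u(U_0)-v$ solves a linear equation forced by $B(\delta u,\delta u)$. Combined with local Lipschitz dependence, this gives $\|r\|_{C\mathcal H^1}=o(\|\epsilon\|_{\mathcal X})$.

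\emph{Step 5: Fr\'echet differentiability in $W_N$.} The map $W_N\mapsto \eta$ must be treated pathwise for continuous $W_N$. I would write the low-mode part via integration by parts:
\[
\int_0^t e^{-\lambda_j(t-r)}\,d\beta_j(r)=\beta_j(t)-\lambda_j\int_0^t e^{-\lambda_j(t-r)}\beta_j(r)\,dr .
\]
This is a bounded linear map from $C([0,t];\R^N)$ to $C([0,t];H^1_0)$, and the high modes are held fixed. Hence $W_N\mapsto \eta$ is affine and continuous into $L^2(0,t;L^2)$. Composing with the $C^1$ map $f\mapsto u$ from Step 4, which is differentiable in the forcing by the same remainder argument, gives the claim.

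The main obstacle I expect is Step 2: the $\mathcal H^1$ a priori estimate on a bounded domain with Navier-slip conditions. One must check the boundary terms in $\langle B(u,u),Au\rangle$ and the coercivity/regularity of $A$ with $\curl u=0$ on $\partial D$. Everything else is routine linear and Gronwall analysis.
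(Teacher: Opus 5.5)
Your proposal is correct and follows the route the paper relies on. The paper states this proposition without proof, treating it as standard; its appendix estimates (Lemma~\ref{NS estimate}, applied pathwise in Lemma~\ref{lemma L bdd} with forcing $\overline{\eta}$) treat the velocity as a deterministic Navier--Stokes equation driven by the autonomous heat component. They work with the vorticity $\varpi=\curl u$, which vanishes on $\partial D$, so the transport term drops out of the enstrophy estimate (equivalently $\langle B(u,u),Au\rangle=0$), and the boundary obstacle you anticipate in Step~2 does not arise.

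Two small corrections. First, since $\eta(t)\in H^1_0(\omega)$ for a.e.\ $t$, its zero extension does lie in $H^1_0(D)$; the paper uses $\|\overline{\eta}\|_{H^1(D)}=\|\eta\|_{H^1(\omega)}$. Your $L^2$-forcing argument does not need this, so the slip is harmless. Second, in Step~5 the integration-by-parts formula needs the extra term $-e^{-\lambda_j t}\beta_j(0)$ for paths in $C([0,t];\R^N)$ that do not start at $0$.
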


    Throughout this paper, we use the notations $U(t)=U(t,U_0)=U_t=U(t,U_0,W)=(u_t,\overline{\eta}_t)$ to denote the unique solution of equation \eqref{eq NS2}, depending on the context. The meaning will be clear from the circumstances and should not cause any confusion.

    \vspace{0.3em}

    Using Proposition \ref{prop well-posed}, the coupled system \eqref{eq NS},\eqref{eq eta} defines a Feller family of Markov processes in $\mathcal X$. The transition function is given by
	\begin{equation*}
	P_t(U_0,A)=\Pb(U(t,U_0)\in A)\quad\text{for }A\in\mathcal B(\mathcal X),\;U_0\in \mathcal X,\;t\geq0.
	\end{equation*}
    The corresponding Markov semigroup $P_t\colon B_b(\mathcal X)\rightarrow B_b(\mathcal X)$ and its dual $P^*_t\colon\mathcal{P}(\mathcal X)\rightarrow\mathcal{P}(\mathcal X)$ are defined by
	\begin{equation*}
    P_t\varphi(U_0)=\int_{\mathcal X}\varphi(\hat{U})P_t(U_0,d\hat{U}),\quad P_t^*\mu(\Gamma)=\int_{\mathcal X}P_t(U_0,\Gamma)\mu(dU_0)
	\end{equation*}
    for $\varphi\in B_b(\mathcal X)$, $\mu\in\mathcal{P}(\mathcal X)$, $U_0\in \mathcal X$ and $\Gamma\in\mathcal B(\mathcal X)$. Recall that a probability measure $\mu\in\mathcal{P}(\mathcal X)$ is called \textit{invariant} for $\{P_t^*\}_{t\geq0}$ if $P_t^*\mu=\mu$ for any $t\geq0$. Our goal is to investigate exponential mixing for the Markov process $\{U(t,U_0)\}_{t\geq0,\;U_0\in \mathcal X}$ on $\mathcal X$, i.e. the Main Theorem.

\subsection{Malliavin calculus and the coupled system}\label{Sec 2.2}

To establish the asymptotic strong Feller property, we follow the approach developed in \cite{HM-06,HM-11b,FGRT-15}, which relies on the Malliavin calculus techniques. To this end, we introduce in this subsection the associated definitions and summarize some preliminaries for the coupled system.

\vspace{0.6em}

In the following notations, let $U_t=U(t,U_0,W)=(u_t,\eta_t)$ be the solution of \eqref{eq NS},\eqref{eq eta} with initial value $U_0=(u_0,\eta_0)$ and noise input $W$. We introduce a separable Hilbert space
	\begin{equation*}
	H=L^2(\R^+;L^2(\omega;\R^2)).
	\end{equation*}
The cylindrical Wiener process $W$ thus gives rise to an isonormal Gaussian process on $H$.

\vspace{0.3em}

The Malliavin derivative $\mathcal{D}$ associated with the coupled system \eqref{eq NS},\eqref{eq eta} is therefore defined by, for each $h\in L^2(\Omega;H)$, 
	\begin{equation*}
	\mathcal{D}\colon\mathbb D^{1,2}(\mathcal X)\rightarrow L^2(\Omega;H\otimes \mathcal X),\quad\langle\mathcal{D}U_t,h\rangle_{H}=\lim\limits_{\varepsilon\rightarrow0}\frac{U(t,U_0,W+\varepsilon\int_0^\cdot h(r)dr)-U(t,U_0,W)}{\varepsilon},
	\end{equation*}
where $\mathbb D^{1,2}(\mathcal X)$ denotes the space of $\mathcal X$-valued random variables that are Malliavin differentiable with both $F$ and $\mathcal DF$ square integrable, and the limit is understood in $L^2(\Omega;\mathcal X)$.

\vspace{0.6em}

For any $V_s=(v_s,p_s)\in \mathcal X$ and $s\geq0$, let $\mathcal J_{s,t}V_s=V_t=(v_t,p_t)$ be the unique solution of the linearized equation, which reads
	\begin{equation}\label{eq J}
	\begin{cases}
	\partial_tv-\nu\Delta v+(u\cdot\nabla)v+(v\cdot\nabla)u+\nabla\pi=\overline{p}(t,x),\quad x\in D,\;t>s,\\
	\dvg v=0,\\
	v\cdot n=0,\quad\curl v=0,\quad x\in\partial D,\\
	\partial_tp-\Delta_\omega p=0,\quad x\in\omega,\\
	p|_{\partial\omega}=0,\\
	v(s,\cdot)=v_s(\cdot),\quad p(s,\cdot)=p_s(\cdot).
	\end{cases}
	\end{equation}
Here $\pi$ is the pressure and $\overline{p}$ denotes the zero extension of $p$ from $\omega$ to $D$.

It is useful to introduce the diagonal operator
	\begin{equation*}
	\Lambda\colon L^2(\omega;\R^2)\rightarrow L^2(\omega;\R^2),\quad\Lambda h=\sum_{j\in\N^+}b_j\langle h,\psi_j\rangle_{L^2(\omega)}\psi_j.
	\end{equation*}
Note that the assumption $\sum_{j\in\N^+}b_j^2<\infty$ implies that $\Lambda$ is Hilbert--Schmidt on $L^2(\omega;\R^2)$. Then for any $h\in H$, it follows that
	\begin{equation}\label{eq D def}
	\langle\mathcal{D}U_t,h\rangle_{H}=\int_0^t\mathcal J_{r,t}\mathcal Bh(r)dr,
	\end{equation}
where the operator $\mathcal B$ is defined as
	\begin{equation*}
	\mathcal B\colon L^2(\omega;\R^2)\rightarrow \mathcal X,\quad\mathcal Bh=\left(0,\Lambda h\right).
	\end{equation*}

Inspired by \eqref{eq D def}, we define the random operator $\mathcal A_{s,t}$ for $0\leq s<t<\infty$ by
	\begin{equation*}
	\mathcal A_{s,t}\colon H_{s,t}\rightarrow \mathcal X,\quad\mathcal A_{s,t}h=\int_s^t\mathcal J_{r,t}\mathcal Bh(r)dr,
	\end{equation*}
where $H_{s,t}:=L^2(s,t;L^2(\omega;\R^2))$, endowed with the inner product inherited from $H$.

For any $h\in H_{s,t}$, the trajectory $\mathcal A_{s,r}h=(v_r,p_r)$ satisfies
	\begin{equation*}
	\begin{cases}
	\partial_rv-\nu\Delta v+(v\cdot\nabla)u+(u\cdot\nabla)v+\nabla\pi=\overline{p}(r,x),\quad x\in D,\;r\in(s,t),\\
	\dvg v=0,\\
	v\cdot n=0,\quad\curl v=0,\quad x\in\partial D,\\
	\partial_rp-\Delta_{\omega}p=\Lambda h(r),\quad x\in\omega,\\
	p|_{\partial\omega}=0,\\
	v(s,\cdot)=0,\quad p(s,\cdot)=0.
	\end{cases}
	\end{equation*}
The adjoint operator $\mathcal A^*_{s,t}$ of $\mathcal A_{s,t}$ is given by
	\begin{equation*}
	\mathcal A^*_{s,t}\colon \mathcal X\rightarrow H_{s,t},\quad(\mathcal A^*_{s,t}V)(r)=\mathcal B^*\mathcal J^*_{r,t}V\quad\text{for }V\in \mathcal X,\;r\in[s,t].
	\end{equation*}
Here $\mathcal B^*$ denotes the adjoint operator of $\mathcal B$, given by
	\begin{equation*}
	\mathcal B^*\colon \mathcal X\rightarrow L^2(\omega;\R^2),\quad\mathcal B^*(w,q)=\Lambda q=\sum_{j\in\N^+}b_j\langle q,\psi_j\rangle\psi_j\quad\text{for }(w,q)\in \mathcal X.
	\end{equation*}
Meanwhile, $\mathcal J_{s,t}^*$ stands for the Hilbert adjoint of $\mathcal J_{s,t}$ with respect to the $\mathcal X$-inner product.

Similarly, for any $N\in\N^+$ and $0\leq s<t$, define the random operator $\mathcal A_{s,t,N}=\mathcal A_{s,t}\mathsf{P}_N$ as
	\begin{equation*}
	\mathcal A_{s,t,N}\colon H_{s,t}\rightarrow \mathcal X,\quad\mathcal A_{s,t,N}h=\int_s^t\mathcal J_{r,t}\mathcal B\mathsf{P}_Nh(r)dr,
	\end{equation*}
where $\mathsf{P}_N$ is the projection on $L^2(\omega;\R^2)$. The associated adjoint operator $\mathcal A^*_{s,t,N}$ is given by
	\begin{equation*}
	\mathcal A^*_{s,t,N}\colon \mathcal X\rightarrow H_{s,t},\quad(\mathcal A^*_{s,t,N}V)(r)=\mathsf{P}_N\mathcal B^*\mathcal J^*_{r,t}V\quad\text{for }V\in \mathcal X,\;r\in[s,t].
	\end{equation*}

\vspace{0.3em}

Finally, we define the truncated Malliavin matrix $\mathcal M_{s,t,N}$ by
	\begin{equation*}
	\mathcal M_{s,t,N}\colon \mathcal X\rightarrow \mathcal X,\quad\mathcal M_{s,t,N}=\mathcal A_{s,t,N}\mathcal A^*_{s,t,N}.
	\end{equation*}
In particular, one has
	\begin{equation*}
	\mathcal M_{s,t,N}=\int_s^t\mathcal J_{r,t}\mathcal B\mathsf{P}_N\mathcal B^*\mathcal J^*_{r,t}dr,
	\end{equation*}
and
	\begin{equation*}
	\langle\mathcal M_{s,t,N}V,V\rangle=\int_s^t\|(\mathcal A^*_{s,t,N}V)(r)\|^2dr=\sum_{1\leq j\leq N}b_j^2\int_s^t\langle q_r,\psi_j\rangle_{L^2(\omega)}^2dr\quad\forall\,V\in \mathcal X,
	\end{equation*}
where $(w_r,q_r)=\mathcal J^*_{r,t}V$.

    \section{Stabilization analysis for the coupled system via localized force}\label{Sec 3}

    In this section, we establish the quantitative stabilization result for the coupled linearized system stated in Theorem~\ref{thm control4}. It provides a finite-dimensional control acting only on the forced component, together with the estimates required for the Malliavin calculus argument proving the asymptotic strong Feller property in Section~\ref{Sec 4}.

    \vspace{0.6em}

    Let us consider the following controlled coupled linearized system:
	\begin{equation}\label{eq V}
	\begin{cases}
	\partial_rv-\nu\Delta v+(g\cdot\nabla)v+(v\cdot\nabla)g+\nabla\pi=\overline{p}(r,x),\quad x\in D,\;r\in(0,1),\\
	\dvg v=0,\\
	v\cdot n=0,\quad\curl v=0,\quad x\in\partial D,\\
	\partial_rp-\Delta_\omega p=\mathsf P_N\Lambda h(r,x),\quad x\in\omega,\\
	p|_{\partial\omega}=0,\\
	v(0,\cdot)=v_0(\cdot),\quad p(0,\cdot)=p_0(\cdot).
	\end{cases}
	\end{equation}
    Here $V_0=(v_0,p_0)\in \mathcal X$, $N\in\N^+$ and $h\in L^2(0,1;L^2(\omega;\R^2))$. Let $Q=D\times(0,1)$. The reference velocity field $g$ is assumed to satisfy
	\begin{equation}\label{eq g}
    \dvg g=0\text{ in } D,\quad g\cdot n=0\text{ on }\partial D,\quad g\in L^\infty(Q;\R^2),\quad\partial_rg\in L^2(0,1;L^4(D;\R^2)).    
	\end{equation}
    For $0\leq s<t\leq1$, let $\mathcal J_{s,t,g}\colon \mathcal X\rightarrow \mathcal X$ denote the solution operator of \eqref{eq V}, and set $\mathcal J_g:=\mathcal J_{0,1,g}$. Define the auxiliary low-frequency control map $\mathcal R_{N,g}\in\mathcal L(H_{0,1};\mathcal X)$ by
	\begin{equation*}
	\mathcal R_{N,g}h:=\int_0^1\mathcal J_{r,1,g}(0,\mathsf P_Nh(r))dr,\quad h\in L^2(0,1;L^2(\omega;\R^2)).
	\end{equation*}
    This operator corresponds to the auxiliary system obtained from \eqref{eq V} by replacing $\mathsf P_N\Lambda h$ with $\mathsf P_Nh$. The  control map $\mathcal A_{N,g}\in\mathcal L(H_{0,1};\mathcal X)$ and its Gramian $\mathcal G_{N,g}\in\mathcal L(\mathcal X)$ are given by
	\begin{equation*}
	\mathcal A_{N,g}h:=\mathcal R_{N,g}(\Lambda h)=\int_0^1\mathcal J_{r,1,g}(0,\mathsf P_N\Lambda h(r))dr,\quad\mathcal G_{N,g}:=\mathcal A_{N,g}\mathcal A_{N,g}^*.
	\end{equation*}

    In what follows, let $\mathcal C_g>0$ be a constant, which may vary from line to line, of the form
	\begin{equation*}
	\mathcal C_g=C\exp\left(\exp\left(CG_1^{20/3}+\exp(CG_0^2)\right)\right),\quad G_0=\|g\|_{L^\infty(Q)},\quad G_1=\|\partial_rg\|_{L^2(0,1;L^4(D))},
	\end{equation*}
    where $C>0$ depends only on $D$, $\omega$ and $\nu$.

    The main stabilization result is collected in the following theorem.

    \begin{theorem}\label{thm control4}
    For the equation \eqref{eq V}, there exists a constant $C>0$ such that the following holds. For any $\beta>0$, $N\in\N^+$, $V_0\in \mathcal X$ and $g$ satisfying \eqref{eq g}, the solution $V$ of \eqref{eq V} with the control $h\in H_{0,1}$ given by
	\begin{equation*}
	h(r)=-\left(\mathcal A_{N,g}^*(\mathcal G_{N,g}+\beta)^{-1}\mathcal J_gV_0\right)(r),\quad r\in(0,1),
	\end{equation*}
    satisfies that
	\begin{align}
	\|V(1)\|_{\mathcal X}&\leq\mathcal C_g\left(\widehat b_N\lambda_N\beta^{1/2}+\lambda_N^{-1/4}\right)\|V_0\|_{\mathcal X},\label{eq control4a}\\
	\|h\|_{L^2(0,1;L^2(\omega))}&\leq\mathcal C_g\left(\widehat b_N\lambda_N+\beta^{-1/2}\lambda_N^{-1/4}\right)\|V_0\|_{\mathcal X},\label{eq control4b}
	\end{align}
    where $\widehat b_N:=\max\{|b_j|^{-1}:1\leq j\leq N\}$.
    \end{theorem}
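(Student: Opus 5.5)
The plan is to separate the statement into an abstract Hilbert-space part, which turns a weak observability inequality into the two bounds, and a PDE part, which proves that inequality with constant $\mathcal C_g$. By Duhamel, the controlled solution is $V(1)=\mathcal J_gV_0+\mathcal A_{N,g}h$. With the stated choice of $h$ this becomes
\begin{equation*}
V(1)=\bigl(I-\mathcal G_{N,g}(\mathcal G_{N,g}+\beta)^{-1}\bigr)\mathcal J_gV_0=\beta(\mathcal G_{N,g}+\beta)^{-1}\mathcal J_gV_0.
\end{equation*}
Write $Y:=\beta(\mathcal G_{N,g}+\beta)^{-1}\mathcal J_gV_0$. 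The standard Tikhonov identities then give two bounds:
\begin{equation*}
\|\mathcal A_{N,g}^*Y\|^2_{H_{0,1}}\le \beta\,\langle Y,\mathcal J_gV_0\rangle,\qquad \|h\|^2_{H_{0,1}}\le \beta^{-1}\langle (\mathcal G_{N,g}+\beta)^{-1}\mathcal J_gV_0,\ \mathcal J_gV_0\rangle .
\end{equation*}
So everything reduces to a weak observability inequality for the adjoint flow. Specifically, I aim for the following: for every $Z\in\mathcal X$,
\begin{equation}\label{eq plan obs}
\|\mathcal J_g^*Z\|_{\mathcal X}\le \mathcal C_g\bigl(\widehat b_N\lambda_N\|\mathcal A^*_{N,g}Z\|_{H_{0,1}}+\lambda_N^{-1/4}\|Z\|_{\mathcal X}\bigr).
\end{equation}

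Given \eqref{eq plan obs}, the two estimates follow. Pair $V(1)=Y$ with a test vector $Z$. We have $\langle Y,Z\rangle=\beta\langle \mathcal J_gV_0,(\mathcal G_{N,g}+\beta)^{-1}Z\rangle$, so moving $\mathcal J_g$ to the other side reduces matters to estimating $\|\mathcal J_g^*(\mathcal G_{N,g}+\beta)^{-1}Z\|$. For $W=(\mathcal G_{N,g}+\beta)^{-1}Z$ one has $\|\mathcal A^*_{N,g}W\|\le (4\beta)^{-1/2}\|Z\|$ and $\|W\|\le\beta^{-1}\|Z\|$. Applying \eqref{eq plan obs} to $W$ and multiplying by $\beta$ gives
\begin{equation*}
\|V(1)\|_{\mathcal X}\le\mathcal C_g\bigl(\widehat b_N\lambda_N\beta^{1/2}+\lambda_N^{-1/4}\bigr)\|V_0\|_{\mathcal X},
\end{equation*}
which is \eqref{eq control4a}. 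For the control, $h=-\mathcal A^*_{N,g}W$ with $W=(\mathcal G_{N,g}+\beta)^{-1}\mathcal J_gV_0$. Write $\langle W,\mathcal J_gV_0\rangle=\langle \mathcal J_g^*W,V_0\rangle$ and apply \eqref{eq plan obs} together with $\|h\|^2+\beta\|W\|^2=\langle W,\mathcal J_gV_0\rangle$. Young's inequality then absorbs the terms and yields \eqref{eq control4b}: the observation term is absorbed into $\|h\|^2$, and the remainder term into $\beta\|W\|^2$, producing $\beta^{-1/2}\lambda_N^{-1/4}$.

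The remaining step is to prove \eqref{eq plan obs}. I would prove it by duality from an approximate controllability statement: for each $V_0$ there is a control $k$, with $\|k\|_{H_{0,1}}\le\mathcal C_g\lambda_N\|V_0\|$, acting through the first $N$ heat modes, such that $\|\mathcal J_gV_0+\mathcal R_{N,g}k\|\le\mathcal C_g\lambda_N^{-1/4}\|V_0\|$. Setting $\tilde h=\Lambda^{-1}\mathsf P_Nk$, which costs the factor $\widehat b_N$, and pairing with $Z$ gives \eqref{eq plan obs}. The controllability is built in three stages.

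\begin{enumerate}
\item \emph{Localized Navier--Stokes control.} Use the Carleman estimate of \cite{Guerrero-06} for the linearized Navier--Stokes adjoint with Navier-slip conditions. Through Fenchel--Rockafellar duality, this gives a null control $f$ supported in $\omega$ for the $v$-equation. The Carleman weights give $f$ the regularity $f\in H^1(0,1;L^2)\cap L^2(0,1;H^2\cap H^1_0(\omega))$, vanishing near $t=0,1$, with cost $\exp(\exp(CG_0^2)+CG_1^{20/3})$. The $\partial_rg$ dependence enters here, through $G_1$, when time derivatives of the control are estimated.
\item \emph{Lifting through the heat component.} Realize the forcing $\overline p=f$ by taking $p=e^{r\Delta}p_0\chi+f$ with a cutoff, and setting $k=(\partial_r-\Delta_\omega)\zeta$ for the appropriate path $\zeta$. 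This is where the time regularity from stage 1 is used.
\item \emph{Truncation to $N$ modes.} Replace $k$ by $\mathsf P_Nk$. The error $(I-\mathsf P_N)k$ produces $p$-errors of size $\lambda_N^{-1/4}$, using the extra $H^2$ regularity of $\zeta$ and interpolation. It propagates to $v(1)$ in $\mathcal H^1$ via parabolic smoothing, with constant $\mathcal C_g$.
\end{enumerate}

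I expect the main obstacle to be the first stage: obtaining a control that is regular in space and time with double-exponential dependence on $G_0,G_1$. This means carrying the weighted Carleman norms through the duality argument and tracking the $\partial_rg$ terms. I would first try to take the minimizer of a weighted functional $J(h)$ in the Fursikov--Imanuvilov style and then differentiate the optimality system in time.
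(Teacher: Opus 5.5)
Your argument for the theorem itself is the paper's. The paper also sets $Z^*=(\mathcal G_{N,g}+\beta)^{-1}\mathcal J_gV_0$ and gets $h=-\mathcal A_{N,g}^*Z^*$ and $V(1)=\beta Z^*$. It uses the identity $\|h\|_{H_{0,1}}^2+\beta^{-1}\|V(1)\|_{\mathcal X}^2=\langle\mathcal J_g^*Z^*,V_0\rangle_{\mathcal X}$, which is your identity for $W$. It then applies Proposition~\ref{prop obs2} together with $\|\mathsf P_Nq^*\|\le\widehat b_N\|\Lambda\mathsf P_Nq^*\|$, which together are exactly your weak observability inequality, and closes with Young's inequality. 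Your separate pairing proof of \eqref{eq control4a} is correct but redundant, since the same identity already bounds $\beta^{-1}\|V(1)\|_{\mathcal X}^2$. The paper proves Proposition~\ref{prop obs2} as you propose, by duality from an approximate null control built from a regular Carleman-based Navier--Stokes control, lifting through the heat equation, and truncation to $N$ modes.

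A few corrections are needed.
\begin{enumerate}
\item Your second displayed bound $\|h\|_{H_{0,1}}^2\le\beta^{-1}\langle(\mathcal G_{N,g}+\beta)^{-1}\mathcal J_gV_0,\mathcal J_gV_0\rangle$ is false for general $\beta$: on the spectrum $\sigma$ of $\mathcal G_{N,g}$ it needs $\sigma\beta\le\sigma+\beta$. The correct bound has no factor $\beta^{-1}$. You never use it, so nothing breaks.
\item More substantively, you lift before truncating, so you need $\Delta_\omega f\in L^2$, i.e.\ $f\in L^2(0,1;H^2\cap H^1_0(\omega))$. The Carleman weights do not give this. The weight $\Theta_g$ depends only on $r$, and the weighted control $-\Theta_g\varphi|_\omega$ does not vanish on $\partial\omega$. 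You have two options:
\begin{itemize}
\item insert a cutoff $\chi\in C_c^\infty(\omega)$ into the control operator, so the control is $-\chi\Theta_g\varphi$, with the Carleman observation taken on a set where $\chi=1$; or
\item truncate first, as the paper does.
\end{itemize}
The paper's order needs only $\zeta\in L^2(0,1;H^2(\omega))$. The lifting cost comes from $\|\Delta_\omega\mathsf P_N\zeta\|\le\lambda_N\|\zeta\|$, which is where the factor $\lambda_N$ comes from. Lemma~\ref{lemma PN} then gives the $\lambda_{N+1}^{-1/4}$ truncation error for $H^1(\omega)$ functions that do not vanish on $\partial\omega$. With the cutoff, your order avoids that $\lambda_N$ loss, but the improvement is not needed.
\item The Carleman route gives a triple exponential in $G_0$: $\lambda_g\sim e^{CG_0^2}$, $s_g\sim e^{C\lambda_g}$, and the observability constant is of size $\exp(e^{C\lambda_g})$. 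This is exactly what $\mathcal C_g$ allows. The double exponential you aim for is neither produced by these weights nor required.
\end{enumerate}
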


    The proof of Theorem \ref{thm control4} is based on the following  weak observability inequality.

    \begin{proposition}\label{prop obs2}
    For any $g$ satisfying \eqref{eq g}, there exists a constant $\mathcal C_g>0$ such that for any $N\in\N^+$ and $Z_1=(w_1,q_1)\in \mathcal X$, the solution $(w(r),q(r))=\mathcal J_{r,1,g}^*Z_1$
    satisfies that
	\begin{equation}\label{eq obs2}
	\|(w(0),q(0))\|_{\mathcal X}\leq\mathcal C_g\left(\lambda_N\|\mathsf P_Nq\|_{L^2(0,1;L^2(\omega))}+\lambda_N^{-1/4}\|Z_1\|_{\mathcal X}\right).
	\end{equation}
    \end{proposition}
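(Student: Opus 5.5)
The plan is to prove \eqref{eq obs2} by duality from a \emph{regularized null-controllability} result for the linearized system with a finite-mode heat control. Fix $Z_1\in\mathcal X$ and let $(w,q)(r)=\mathcal J^*_{r,1,g}Z_1$. For any $V_0\in\mathcal X$ and any control $f\in H_{0,1}$, let $V$ solve \eqref{eq V} with $\mathsf P_N\Lambda h$ replaced by $\mathsf P_N f$. Duality then gives
\begin{equation*}
\langle V(1),Z_1\rangle_{\mathcal X}=\langle V_0,(w(0),q(0))\rangle_{\mathcal X}+\int_0^1\langle \mathsf P_Nf(r),q(r)\rangle\,dr .
\end{equation*}
Since the $\mathcal X$-pairing contains an $\mathcal H^1$ part, the adjoint inner products must be understood with respect to the $\mathcal X$ structure. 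Consequently, \eqref{eq obs2} follows once we show the following: for every $V_0$ there is $f=\mathsf P_N f$ with
\begin{equation*}
\|f\|_{L^2(0,1;L^2(\omega))}\le \mathcal C_g\lambda_N\|V_0\|_{\mathcal X},\qquad \|V(1)\|_{\mathcal X}\le \mathcal C_g\lambda_N^{-1/4}\|V_0\|_{\mathcal X}.
\end{equation*}
Taking the supremum over $\|V_0\|_{\mathcal X}\le1$ then yields the estimate.

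\textbf{Step 1: the velocity component.} We first control the velocity using a force supported in $\omega$. The Carleman estimate for the linearized Navier--Stokes equation with Navier-slip conditions \cite{Guerrero-06} gives observability for the adjoint velocity equation, with observation on a slightly smaller set $\omega'\Subset\omega$. The constant in this estimate depends on $g$ through $\exp(CG_0^2)$. Applying Fenchel--Rockafellar duality with weighted functionals that vanish near $t=0$ and $t=1$ produces a null control $\zeta$ for the $v$-equation, started from $v_0$ with the free heat part $e^{r\Delta_\omega}p_0$ included as a source. This control should satisfy:
\begin{itemize}
\item $\zeta$ is supported in $\omega'\times(0,1)$;
\item $\zeta$ vanishes near $r=0$ and $r=1$;
\item $\zeta$ is regular in time and space, $\zeta\in H^1(0,1;L^2)\cap L^2(0,1;H^2\cap H_0^1(\omega))$, with norm at most $\mathcal C_g\|V_0\|_{\mathcal X}$.
\end{itemize}
The extra time regularity costs a derivative of $g$ in time, which is where $G_1$ enters; this explains the $G_1^{20/3}$ term in $\mathcal C_g$.

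\textbf{Step 2: lifting through the heat equation and truncation.} We want $p=e^{r\Delta_\omega}p_0+\zeta$. This means taking $\tilde f=(\partial_r-\Delta_\omega)\zeta\in L^2(0,1;L^2(\omega))$, which is possible precisely because of the regularity from Step 1. With this choice the untruncated system reaches $v(1)=0$ and $p(1)=e^{\Delta_\omega}p_0$. The latter term is harmless: choosing the control to cancel it costs only a bounded amount, because $\|e^{\Delta_\omega}p_0\|_{H^2}\lesssim\|p_0\|$. We then set $f=\mathsf P_N\tilde f$. The truncation error enters the heat component as $\delta p$, solving $(\partial_r-\Delta_\omega)\delta p=(I-\mathsf P_N)\tilde f$, so that $\delta p=(I-\mathsf P_N)\zeta$ plus a free part, i.e.
\begin{equation*}
\|\delta p\|_{L^2(0,1;L^2)}\le\lambda_N^{-1}\|\zeta\|_{L^2H^2}.
\end{equation*}
This error feeds into the velocity through the linearized operator. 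Energy estimates at the $\mathcal H^1$ level, with Gronwall constant $\exp(CG_0^2)$, bound $\|\delta v(1)\|_{\mathcal H^1}$ by $\|\delta p\|_{L^2(0,1;L^2)}$. Interpolating between $\|(I-\mathsf P_N)\zeta\|_{L^2L^2}\lesssim\lambda_N^{-1}$ and the crude $H^1$ bound gives a remainder of order $\lambda_N^{-1/4}$ or better. The cost of the control, $\|\mathsf P_N\tilde f\|$, is bounded by $\mathcal C_g\|V_0\|_{\mathcal X}$.

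The $\lambda_N$ prefactor in \eqref{eq obs2} leaves room, for instance, for a cutoff of the heat semigroup. Alternatively, to put $p(1)$ inside $\mathsf P_N L^2$ one can control the low modes on a final interval, where the Gramian of the $N$-dimensional heat system is bounded below by $\lambda_N^{-1}$; this costs a factor $\lambda_N$.

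\textbf{Main obstacle.} The hard part is Step 1. We need a localized control for the velocity equation that is regular enough in time (in $H^1_t$) and compactly supported in $\omega$ with $H^2$ regularity, with constants depending on $g$ only through $G_0$ and $G_1$ in the doubly exponential form. Obtaining this from Carleman weights requires differentiating the adjoint system in time, which brings in $\partial_rg\in L^2L^4$. It also requires handling the curl/Navier-slip boundary terms in the Carleman estimate, which I would take from \cite{Guerrero-06}.
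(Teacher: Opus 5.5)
Your duality reduction is the same as the paper's: the paper pairs $Z_1$ with $\mathcal J_gV_0+\mathcal R_{N,g}h_N$, with $h_N$ taken from Proposition~\ref{prop control3} at $\delta=\min\{1/8,\lambda_N^{-2}\}$. Your construction of the approximate null control, however, has a genuine gap in how the heat datum $p_0$ is handled at $r=1$.

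\textbf{Cancelling $p(1)$ afterwards breaks $v(1)=0$.} You run Step~1 with the source $\overline{e^{r\Delta_\omega}p_0}$ on all of $(0,1)$, obtain $p(1)=e^{\Delta_\omega}p_0$, and then propose to cancel this ``at bounded cost''. Cancelling it means changing the heat trajectory on a time interval of positive length. That changes the velocity forcing by $O(\|p_0\|)$ in $L^2(0,1;L^2(\omega))$, so $v(1)=0$ is lost and the terminal velocity error is of order $\|p_0\|$, not $\lambda_N^{-1/4}\|p_0\|$.

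\textbf{Step~1 is not available with that source.} With a source $S$, the weighted HUM identity becomes $\|\zeta_\varepsilon\|_{\mathcal U_g}^2+\varepsilon^{-1}\|z_\varepsilon(1)\|^2=\langle\varphi_\varepsilon(0),v_0\rangle+\int_0^1\langle\overline S,\varphi_\varepsilon\rangle\,dr$. The last term cannot be bounded by $\int_0^1\int_\omega\Theta_g|\varphi_\varepsilon|^2$ when $S$ does not vanish near $r=1$. The reason is that $\Theta_g$ vanishes to infinite order at $r=1$, while $\varphi_\varepsilon(1)$ is arbitrary.

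\textbf{Repair.} Build the switch-off into Step~1.
Take $\chi\in C^\infty([0,1])$ with $\chi=0$ on $[0,1/4]$ and $\chi=1$ on $[3/4,1]$, and use the source $S=\overline{(1-\chi)e^{r\Delta_\omega}p_0}$. Its pairing is controlled, as in Proposition~\ref{prop obs1}, by the backward energy estimate on $[0,3/4]$ together with the lower bound of the Carleman weights on $[3/4,7/8]$.
Then realize the heat path $(1-\chi)e^{r\Delta_\omega}p_0+\zeta$ with the control $-\chi'e^{r\Delta_\omega}p_0+(\partial_r-\Delta_\omega)\zeta$, whose norm is at most $\mathcal C_g\|V_0\|_{\mathcal X}$.
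After applying $\mathsf P_N$, the heat error is $-(I-\mathsf P_N)\bigl(\zeta-\chi e^{r\Delta_\omega}p_0\bigr)$, of size at most $\lambda_{N+1}^{-1}\|\Delta_\omega\zeta\|_{L^2(0,1;L^2(\omega))}+e^{-\lambda_{N+1}/4}\|p_0\|$. Moreover $p(1)=e^{\Delta_\omega}(I-\mathsf P_N)p_0$.
Lemma~\ref{lemma l2} then closes the argument. Its constant is $\exp(C(G_0^2+G_1))$, not $\exp(CG_0^2)$: with $g$ merely bounded, $(v\cdot\nabla)g$ must be handled through $\partial_rg$. That, together with $\lambda_g$, is where $G_1$ enters, not the time regularity of the control.

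\textbf{Compact support of $\zeta$ is essential.} The repair needs $\zeta(r)\in H^2\cap H_0^1(\omega)$; otherwise $\zeta$ is not a Dirichlet heat trajectory with $L^2$ forcing at all. You can obtain it with the weight $\theta^2\Theta_g$, where $\theta\in C_c^\infty(\omega)$, and a Carleman observation on $\{\theta=1\}$. The regularity step of Theorem~\ref{thm control1} then carries over unchanged, because the adjoint is still homogeneous.

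\textbf{Comparison with the paper.} Once repaired, your route differs from the paper's and is in fact sharper.
\begin{itemize}
\item The paper uses the control of Theorem~\ref{thm control1}, which does not vanish on $\partial\omega$, and truncates it \emph{before} lifting. This costs $\lambda_{N+1}^{-1/4}$ in the error (Lemma~\ref{lemma PN}) and $\lambda_N$ in the control cost (via $\Delta_\omega\mathsf P_N\zeta$).
\item The paper keeps $p_0$ out of the velocity problem altogether. It switches $\mathsf P_Np_0$ off on $[0,\delta]$, with error $\delta^{1/2}$ and cost $\delta^{-1/2}$, balanced at $\delta=\lambda_N^{-2}$.
\item Your construction gives control cost $O(1)$ and remainder $O(\lambda_N^{-1})$, which implies \eqref{eq obs2}.
\end{itemize}
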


    \vspace{0.6em}

    We now explain the proof strategy for Proposition~\ref{prop obs2} and Theorem~\ref{thm control4}. The construction combines the regular null controllability of the Navier--Stokes equation \cite{Guerrero-06} with the frequency-analysis strategy applied in \cite{LWXZZ-24,Xiang-24}. Specifically, our approach relies on two ingredients:

    \begin{itemize}
    \item[(\runum{1})] A low-frequency heat control for the coupled system. Starting from a linearized Navier--Stokes null control with additional regularity, we truncate it to the forced modes and lift it through the heat equation, thereby obtaining asymptotic null controllability;

    \item[(\runum{2})] A twofold application of the duality between controllability and observability. We first convert asymptotic null controllability into a weak observability, and then use it to construct explicit regularized controls with the structure required in the Malliavin argument.
    \end{itemize}

    \vspace{0.3em}

    The two regularizations above play different roles: the first one is spectral and makes the fluid control compatible with the forced heat modes, whereas the second one is variational and regularizes the inverse of the controllability Gramian. The proof is structured into five steps, as sketched below. See Sections~\ref{Sec 3.1}--\ref{Sec 3.3} for details.

    \vspace{0.3em}

    \noindent {\it Step 1: Quantitative regular Navier--Stokes control.}
    We first construct a localized control $\zeta$ for the linearized velocity $z$ governed by
    \begin{equation}\label{eq out-z}
    \partial_rz-\nu\Delta z+(g\cdot\nabla)z+(z\cdot\nabla)g+\nabla\pi=\zeta,\quad\dvg z=0,\quad z(0)=z_0.
    \end{equation}
    The control drives $z$ to zero and satisfies
    \begin{equation*}    z(1)=0,\quad\|\zeta\|_{H^1(0,1;L^2(\omega))}+\|\zeta\|_{L^2(0,1;H^2(\omega))}+\|\zeta\|_{L^\infty(0,1;H^1(\omega))}\leq\mathcal C_g\|z_0\|.
    \end{equation*}
    The additional regularity permits the low-frequency truncation in the next step, while the explicit dependence of $\mathcal C_g$ on $g$ is needed for random reference trajectories. The construction is based on the Carleman estimate of \cite{Guerrero-06} and a weighted Fenchel--Rockafellar argument.

    \vspace{0.6em}

    \noindent {\it Step 2: Low-frequency regularization.} Set $\zeta_N=\mathsf P_N\zeta$, and let $z_N$ solve \eqref{eq out-z} with $\zeta$ replaced by $\zeta_N$. Using the spatial regularity of $\zeta$ controls, we derive
    \begin{equation*}
    \|z_N(1)\|_{\mathcal H^1}\leq\lambda_{N+1}^{-1/4}\mathcal C_g\|z_0\|_{\mathcal H^1}, \quad \|\zeta_N\|_{H^1(0,1;L^2(\omega))}\leq \mathcal C_g\|z_0\|_{\mathcal H^1}.
    \end{equation*}
    Thus $\zeta_N$ is a finite-dimensional approximate null control, whose terminal error vanishes as $N\to\infty$. Its finite-dimensional structure allows it to be lifted through the first $N$ forced heat modes in the coupled system.

    \vspace{0.6em}

    \noindent {\it Step 3: Lifting to the coupled system.} We modify $\zeta_N$ only near the two endpoints to obtain a low-frequency heat path satisfying
    \begin{equation*}
    \widehat\zeta_{N,\delta}
    \in H^1(0,1;\mathsf P_NL^2(\omega;\R^2)),\quad\widehat\zeta_{N,\delta}=\zeta_N\text{ on }[\delta/2,1-\delta/2],\quad\widehat\zeta_{N,\delta}(0)=\mathsf P_Np_0,\quad\widehat\zeta_{N,\delta}(1)=0.
    \end{equation*}
    
    We then realize this path through the heat equation by setting $h_{N,\delta}=(\partial_r-\Delta_\omega)\widehat\zeta_{N,\delta}$. For the corresponding coupled solution
    $V_{N,\delta}=(v_{N,\delta},p_{N,\delta})$, one has
    \begin{equation*}
    \|V_{N,\delta}(1)\|_{\mathcal X}\leq\mathcal C_g\left(\lambda_{N+1}^{-1/4}+\delta^{1/2}\right)\|V_0\|_{\mathcal X},\quad\|h_{N,\delta}\|_{H_{0,1}}\leq\mathcal C_g\left(\lambda_N+\delta^{-1/2}\right)\|V_0\|_{\mathcal X}.
    \end{equation*}
    
    Therefore, this lifting transfers the localized fluid control to the dynamically forced component, while the remaining high modes are handled by heat dissipation.

    \vspace{0.6em}

    \noindent {\it Step 4: Weak observability for the coupled system.}
    Taking $\delta$ of order $\lambda_N^{-2}$ and using the classical duality between controllability and observability, we obtain, for the adjoint solution $Z=(w,q)$ with $Z(1)=Z_1$,
    \begin{equation*}
    \|(w(0),q(0))\|_{\mathcal X}\leq\mathcal C_g\left(\lambda_N\|\mathsf P_Nq\|_{L^2(0,1;L^2(\omega))}+\lambda_N^{-1/4}\|Z_1\|_{\mathcal X}\right),
    \end{equation*}
    which implies Proposition \ref{prop obs2}. Note that the terminal remainder makes this a weak observability inequality, corresponding to the weak forms of null controllability studied in \cite{AM-22,TWX-20}.

    \vspace{0.6em}
    \noindent {\it Step 5: Regularized stabilization.}
    Following the regularized control construction applied in \cite{LXZ-26}, we apply the Fenchel--Rockafellar duality to the quadratic minimization problem
    \begin{equation*}
    \inf_{h\in H_{0,1}}J(h)=\inf_{h\in H_{0,1}}
    \left(\frac12\|h\|_{H_{0,1}}^2+\frac1{2\beta}\|\mathcal A_{N,g}h+\mathcal J_gV_0\|_{\mathcal X}^2\right).
    \end{equation*}
    In particular, the resulting control has the explicit form $h=-\mathcal A_{N,g}^*(\mathcal G_{N,g}+\beta)^{-1}\mathcal J_gV_0$.

    Using the weak observability inequality from Step 4, we conclude that
    \begin{align*}
    \|V(1)\|_{\mathcal X}\leq\mathcal C_g\left(\widehat b_N\lambda_N\beta^{1/2}+\lambda_N^{-1/4}\right)\|V_0\|_{\mathcal X},\quad\|h\|_{H_{0,1}}\leq\mathcal C_g\left(\widehat b_N\lambda_N+\beta^{-1/2}\lambda_N^{-1/4}\right)\|V_0\|_{\mathcal X}.
    \end{align*}
    This completes the proof of Theorem \ref{thm control1}, and the resolvent formulation and the quantitative estimates are particularly suited to the Malliavin argument.

    \subsection{Regular control for the linearized Navier--Stokes}\label{Sec 3.1}

    We begin the stabilization analysis by constructing a localized null control for the linearized Navier--Stokes equation with the additional regularity required by the heat dynamics. A Carleman estimate and the resulting weighted observability inequality, combined with the Fenchel--Rockafellar duality, yield the quantitative control stated in Theorem~\ref{thm control1}.

    \vspace{0.6em}

    We consider the controlled linearized Navier--Stokes equation
	\begin{equation}\label{eq z}
	\begin{cases}
	\partial_rz-\nu\Delta z+(g\cdot\nabla)z+(z\cdot\nabla)g+\nabla\pi=\overline{\zeta}(r,x),\quad x\in D,\;r\in(0,1),\\
	\dvg z=0,\\
	z\cdot n=0,\quad\curl z=0,\quad x\in\partial D,\\
	z(0,\cdot)=z_0(\cdot)\in \mathcal H,
	\end{cases}
	\end{equation}
    where $\zeta=\zeta(r,x)$ is supported in $\omega$ and $\overline{\zeta}$ denotes its zero extension to $D$. The corresponding adjoint equation is
	\begin{equation}\label{eq dual}
	\begin{cases}
	-\partial_r\varphi-\nu\Delta\varphi-(g\cdot\nabla)\varphi+(\nabla g)^{\rm T}\varphi+\nabla\rho=0,\quad x\in D,\;r\in(0,1),\\
	\dvg\varphi=0,\\
	\varphi\cdot n=0,\quad\curl\varphi=0,\quad x\in\partial D,\\
	\varphi(1,\cdot)= w(\cdot)\in \mathcal H.
	\end{cases}
	\end{equation}
    To introduce the Carleman weights, let $\omega_1\Subset\omega$ be a smooth open set and $\vartheta\in C^2(\overline D)$ satisfy
	\begin{equation*}
	\vartheta>0\quad{\rm in}\ D,\quad\vartheta=0\quad{\rm on}\ \partial D,\quad|\nabla\vartheta|>0\quad{\rm on}\ \overline D\setminus\omega_1.
	\end{equation*}
    Set $K=\|\vartheta\|_{L^\infty(D)}$. For $\lambda>0$, define
	\begin{equation*}
	\alpha(x,r):=\frac{e^{2\lambda K}-e^{\lambda\vartheta(x)}}{r^4(1-r)^4},\quad\xi(x,r):=\frac{e^{\lambda\vartheta(x)}}{r^4(1-r)^4},\quad x\in D,\;r\in(0,1).
	\end{equation*}
    We also set
	\begin{equation*}
	\alpha_*(r)=\min_{\overline D}\alpha(\cdot,r),\quad\alpha^*(r)=\max_{\overline D}\alpha(\cdot,r),\quad\xi^*(r)=\max_{\overline D}\xi(\cdot,r),\quad r\in(0,1).
	\end{equation*}
    For $s>0$ and $\lambda>0$, define
	\begin{align*}
	\mathcal I(s,\lambda;\varphi):=&s^3\lambda^4\int_0^1\int_De^{-2s\alpha}\xi^3|\varphi|^2dxdr+s\lambda^2\int_0^1\int_De^{-2s\alpha}\xi|\nabla\varphi|^2dxdr\\
	&\quad+s^{-1}\int_0^1\int_De^{-2s\alpha}\xi^{-1}\left(|\partial_r\varphi|^2+|\Delta\varphi|^2\right)dxdr.
	\end{align*}

    \begin{proposition}\label{prop Carleman}  For the equation \eqref{eq dual}, there exist constants $\widehat s,\widehat\lambda,C>0$, depending only on $D$, $\omega$ and $\nu$, such that the following holds. For any $g$ satisfying \eqref{eq g} and $ w\in \mathcal H$, the solution $\varphi$ of \eqref{eq dual} satisfies that
	\begin{align}\label{eq Carleman}
	\mathcal I(s,\lambda;\varphi)\leq Cs^{15/2}\lambda^8\int_0^1\int_{\omega}e^{-4s\alpha_*+2s\alpha^*}(\xi^*)^{15/2}|\varphi|^2dxdr
	\end{align}
    for any $\lambda$ satisfying
	\begin{equation}\label{eq lambda}
	\lambda\geq\widehat\lambda\exp\left(\widehat\lambda\|g\|_{L^\infty(Q)}^2\right)\left(1+\|g\|_{L^\infty(Q)}^{10/3}+\|\partial_rg\|_{L^2(0,1;L^4(D))}^{10/3}\right),
	\end{equation}
    and any $s\geq\widehat se^{8\lambda K}$.
    \end{proposition}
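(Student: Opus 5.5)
The plan is to derive \eqref{eq Carleman} from the known Carleman estimate for the Stokes/linearized Navier--Stokes adjoint system under Navier-slip boundary conditions \cite{Guerrero-06}, while tracking the dependence of the admissible parameters $(s,\lambda)$ on $g$. I would write \eqref{eq dual} as a Stokes-type system for $\varphi$ with right-hand side $F:=(g\cdot\nabla)\varphi-(\nabla g)^{\rm T}\varphi$. The Carleman inequality of \cite{Guerrero-06}, with weights $\alpha,\xi$ as above, bounds $\mathcal I(s,\lambda;\varphi)$ by $C\int_Q e^{-2s\alpha}|F|^2$ plus a local observation term. Here it is convenient to use the curl formulation: in 2D, $\curl\varphi$ satisfies a scalar parabolic equation with homogeneous Dirichlet data, thanks to $\curl\varphi=0$ on $\partial D$. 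The boundary terms then vanish, and the pressure disappears.

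\textbf{Step 1: absorbing the lower-order term.} The term $(g\cdot\nabla)\varphi$ contributes at most $G_0^2\int e^{-2s\alpha}|\nabla\varphi|^2$. This is absorbed by $s\lambda^2\int e^{-2s\alpha}\xi|\nabla\varphi|^2$ once $s\lambda^2\geq CG_0^2$. The term $(\nabla g)^{\rm T}\varphi$ is the delicate one, since only $g\in L^\infty$ is assumed. I would integrate by parts (working in the curl equation, the term $\curl((\nabla g)^{\rm T}\varphi)$ can be rewritten in divergence form), moving the derivative onto $\varphi$ and the weights. The time-derivative information $\partial_r g\in L^2L^4$ enters through the $s^{-1}\int\xi^{-1}|\partial_r\varphi|^2$ term, or through an estimate of $\partial_r$ of the vorticity. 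Interpolating the resulting $L^4$ norms by Gagliardo--Nirenberg between the weighted $L^2$ and $H^1$ terms leads to the exponents $10/3$, and to the factor $\exp(\widehat\lambda G_0^2)$, which comes from an energy estimate near $r=0,1$. All of this produces the condition \eqref{eq lambda} on $\lambda$.

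\textbf{Step 2: the local observation term.} The raw estimate gives an observation of $\curl\varphi$ (or $\nabla\varphi$) on $\omega_1\times(0,1)$. I would remove the derivative using a cutoff $\chi\in C_c^\infty(\omega)$ with $\chi=1$ on $\omega_1$, and integrate by parts against the equation. Each integration by parts costs powers of $s\xi$ and exponential weight factors. Then $e^{-2s\alpha}$ is replaced by the cruder $e^{-4s\alpha_*+2s\alpha^*}$, and $\xi$ by $\xi^*$, which yields the factor $s^{15/2}\lambda^8(\xi^*)^{15/2}$. Taking $s\geq\widehat s e^{8\lambda K}$ ensures that $|\partial_r\alpha|\lesssim\xi^{5/4}$-type bounds and $\lambda$-derivatives of the weights are dominated.

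\textbf{Main obstacle.} The hard part is Step 1: handling $(\nabla g)^{\rm T}\varphi$ with $g$ only bounded, while keeping polynomial dependence on $G_0$ and $G_1$ in $\lambda$. I expect to rely on the approach of \cite{Guerrero-06}, applied to the vorticity equation with the coefficient moved off $g$, keeping careful track of constants.
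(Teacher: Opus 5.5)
\textbf{There is a genuine gap, and it sits in your Step 1, which is where the whole proof would have to happen.}

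First, only $g\in L^\infty(Q)$ is assumed, so $(\nabla g)^{\rm T}\varphi$ is in general not an $L^2$ function. Hence $F$ cannot be fed into an estimate containing $\int_Q e^{-2s\alpha}|F|^2$ at all.

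Second, even for the part $(g\cdot\nabla)\varphi$, your absorption needs a Stokes/Navier-slip Carleman estimate that observes only $\varphi$, yet whose source carries the same weight $e^{-2s\alpha}$ as the left-hand side. Such estimates typically pay for eliminating the pressure with a degraded source weight, of the type on the right of \eqref{eq Carleman}. Since $-4s\alpha_*+2s\alpha^*=-2s\alpha+2s(\alpha^*-\alpha_*)+2s(\alpha-\alpha_*)$ and $\alpha^*-\alpha_*=(e^{\lambda K}-1)/(r^4(1-r)^4)$, that weight exceeds $e^{-2s\alpha}$ by a factor that is unbounded as $r\to0,1$. So no choice of $(s,\lambda)$ absorbs $G_0^2|\nabla\varphi|^2$ into $s\lambda^2\xi e^{-2s\alpha}|\nabla\varphi|^2$. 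This is why estimates of this kind are proved for the system with the drift built in, not by perturbing the Stokes case.

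Third, the remaining ingredients you list are reverse-engineered from \eqref{eq lambda} rather than derived. These are: integrating by parts onto the weights, bringing in $\partial_r g$ through the $\partial_r\varphi$ term, Gagliardo--Nirenberg producing $10/3$, and an energy estimate producing $\exp(\widehat\lambda G_0^2)$. You also never explain how a bound on $\partial_r g$ would help move a \emph{spatial} derivative off $g$.

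\textbf{The missing idea is a one-line identity that makes your ``main obstacle'' disappear.} One has $(\nabla g)^{\rm T}\varphi=\nabla(g\cdot\varphi)-(\nabla\varphi)^{\rm T}g$, hence $-(g\cdot\nabla)\varphi+(\nabla g)^{\rm T}\varphi=-D\varphi\,g+\nabla(g\cdot\varphi)$ with $D\varphi=\nabla\varphi+(\nabla\varphi)^{\rm T}$. Put $\nabla(g\cdot\varphi)$ into the pressure via $\widetilde\rho=\rho+g\cdot\varphi$, and use $\nabla\cdot D\varphi=\Delta\varphi$ for divergence-free $\varphi$. Then no derivative of $g$ ever appears. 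Your remark that $\curl$ kills part of this term is a shadow of the same fact.

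Add the two-dimensional boundary identity $(D\varphi\,n)\cdot\tau=\curl\varphi+a_D\,\varphi\cdot\tau$ on $\partial D$. It converts $\varphi\cdot n=0$, $\curl\varphi=0$ into the Navier condition $(D\varphi\,n)\cdot\tau-a_D\varphi\cdot\tau=0$, with a fixed smooth coefficient $a_D$ depending only on $D$. System \eqref{eq dual} is then literally the homogeneous adjoint system of \cite[Proposition 2.3]{Guerrero-06} with drift $b=g$. Its required bounds on $b$ and $\partial_t b$ are supplied by \eqref{eq g}.

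That proposition directly yields \eqref{eq Carleman}, with the local term already expressed in $\varphi$, so your Step 2 is also unnecessary. It also yields the admissible ranges \eqref{eq lambda} and $s\geq\widehat s e^{8\lambda K}$. This reduction is the paper's entire proof. So instead of perturbing a Stokes estimate, you should apply the cited estimate to the full operator after this rewriting.
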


    \begin{proof}
    Let $\tau$ be a smooth unit tangent vector on $\partial D$, and set $D\varphi=\nabla\varphi+(\nabla\varphi)^{\rm T}$. In dimension two, there exists $a_D\in C^\infty(\partial D)$, depending only on $D$, such that
	\begin{equation*}
	(D\varphi\,n)\cdot\tau=\curl\varphi+a_D\varphi\cdot\tau\quad{\rm on}\ \partial D.
	\end{equation*}
    Hence the boundary conditions in \eqref{eq dual} are equivalent to
    \begin{equation*}   
    \varphi\cdot n=0,\qquad(D\varphi\,n)\cdot\tau-a_D\varphi\cdot\tau=0,         
    \end{equation*}
    which are precisely the Navier boundary conditions considered in \cite[Proposition 2.3]{Guerrero-06}.

    Moreover, using $\dvg\varphi=0$, one has
	\begin{equation*}
	\nabla\cdot D\varphi=\Delta\varphi,\quad-(g\cdot\nabla)\varphi+(\nabla g)^{\rm T}\varphi=-D\varphi\,g+\nabla(g\cdot\varphi).
	\end{equation*}
    Hence, after replacing $\rho$ by $\widetilde\rho=\rho+g\cdot\varphi$, equation \eqref{eq dual} therefore coincides with the homogeneous adjoint system in \cite[Proposition 2.3]{Guerrero-06}, with $b=g$.

    The assumptions in \eqref{eq g} provide the required bounds on $b$ and $\partial_rb$. Applying \cite[Proposition 2.3]{Guerrero-06} gives \eqref{eq Carleman} under \eqref{eq lambda} and $s\geq\widehat se^{8\lambda K}$. The fixed boundary coefficient $a_D$ and the viscosity $\nu$ only affect the constants. This completes the proof.
    \end{proof}

    In what follows, let us fix
	\begin{equation*}
	\lambda_g:=\widehat\lambda\exp\left(\widehat\lambda G_0^2\right)\left(1+G_0^{10/3}+G_1^{10/3}\right),\quad s_g:=\widehat se^{8\lambda_gK},
	\end{equation*}
    and the weights $\alpha,\xi,\alpha_*,\alpha^*,\xi^*$ are evaluated at $\lambda=\lambda_g$ whenever no confusion is possible. Define
	\begin{equation}\label{eq weight}
	\Theta_g(r):=s_g^{15/2}\lambda_g^8e^{-4s_g\alpha_*(r)+2s_g\alpha^*(r)}(\xi^*(r))^{15/2}.
	\end{equation}

    \begin{proposition}\label{prop obs1} 
    For any $g$ satisfying \eqref{eq g}, there exists a constant $\mathcal C_g>0$ such that for any $ w\in \mathcal H$, the solution $\varphi$ of \eqref{eq dual} satisfies that
	\begin{equation}\label{eq obs1}
	\|\varphi(0)\|^2\leq\mathcal C_g\int_0^1\int_\omega\Theta_g(r)|\varphi|^2dxdr.
	\end{equation}
    \end{proposition}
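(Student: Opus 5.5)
The plan is to derive \eqref{eq obs1} from the Carleman estimate of Proposition~\ref{prop Carleman}, evaluated at $\lambda=\lambda_g$ and $s=s_g$, combined with an energy estimate for the backward adjoint equation \eqref{eq dual}. Because \eqref{eq dual} is solved backward from $r=1$, information propagates toward $r=0$. The Carleman left-hand side $\mathcal I(s_g,\lambda_g;\varphi)$ controls a weighted $L^2$ norm of $\varphi$ on the interior time slab $[1/4,3/4]$. An energy estimate then carries this bound down to $\varphi(0)$.

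\emph{Step 1: energy estimate for the adjoint.} Test \eqref{eq dual} with $\varphi$ on $D$. The pressure drops out since $\dvg\varphi=0$ and $\varphi\cdot n=0$. For the Laplacian under the Navier-slip conditions, use $-\int_D\Delta\varphi\cdot\varphi=\|\curl\varphi\|^2$, which holds because $\curl\varphi=0$ on $\partial D$. The transport term $(g\cdot\nabla)\varphi\cdot\varphi$ integrates to zero since $\dvg g=0$ and $g\cdot n=0$. The remaining term $(\nabla g)^{\rm T}\varphi\cdot\varphi$ is rewritten by integration by parts as $-\int \varphi_i g_i\,\partial_j\varphi_j\ldots$, i.e.\ $\int (\nabla g)^{\rm T}\varphi\cdot\varphi = -\int g\cdot((\varphi\cdot\nabla)\varphi)\,$-type terms. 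These are bounded by $G_0\|\varphi\|\|\nabla\varphi\|$. On simply connected $D$, $\|\nabla\varphi\|\lesssim\|\curl\varphi\|$ for divergence-free fields tangent to the boundary, so Young's inequality gives
\begin{equation*}
-\frac{d}{dr}\|\varphi(r)\|^2\leq CG_0^2\|\varphi(r)\|^2 .
\end{equation*}
By Gronwall, $\|\varphi(0)\|^2\leq e^{CG_0^2}\|\varphi(r)\|^2$ for every $r\in[0,1]$. Averaging over $r\in[1/4,3/4]$ yields $\|\varphi(0)\|^2\leq 2e^{CG_0^2}\int_{1/4}^{3/4}\|\varphi(r)\|^2dr$.

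\emph{Step 2: lower bound for the Carleman weight on the slab.} On $[1/4,3/4]$ the factor $r^4(1-r)^4$ lies between $4^{-8}$ and $4^{-4}$. Hence there, $e^{-2s_g\alpha}\xi^3\geq \exp(-C s_g e^{2\lambda_g K})$, and
\begin{equation*}
\int_{1/4}^{3/4}\|\varphi\|^2dr\leq \exp\left(Cs_ge^{2\lambda_gK}\right)\mathcal I(s_g,\lambda_g;\varphi).
\end{equation*}
Combining this with Proposition~\ref{prop Carleman} at $(s_g,\lambda_g)$ gives the right-hand side $\int_0^1\int_\omega\Theta_g|\varphi|^2$, exactly as defined in \eqref{eq weight}. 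Note that $\lambda_g\geq\widehat\lambda$ is admissible by \eqref{eq lambda}.

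\emph{Step 3: bookkeeping of the constant (the main obstacle).} The total constant is $2e^{CG_0^2}\exp(Cs_ge^{2\lambda_gK})$. Since $s_g=\widehat s e^{8\lambda_gK}$, this is bounded by $\exp(C e^{C\lambda_g})$. It remains to check that this fits the form $\mathcal C_g$. We have $\lambda_g\leq C\exp(CG_0^2)(1+G_0^{10/3}+G_1^{10/3})$, so
\begin{equation*}
C\lambda_g\leq \exp(C'G_0^2)+C' G_1^{20/3},
\end{equation*}
using $ab\leq a^2+b^2$ with $a=e^{CG_0^2}$ and $b=1+G_0^{10/3}+G_1^{10/3}$, and absorbing the powers of $G_0$ into the exponential. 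This gives exactly $\exp(\exp(CG_1^{20/3}+\exp(CG_0^2)))$ up to the choice of $C$. The care needed here is in tracking the double exponentials. In particular, $e^{CG_0^2}$ from Gronwall, together with the polynomial factors, must be absorbed consistently into $\mathcal C_g$ without changing its stated form.
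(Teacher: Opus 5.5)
Your proposal is correct and follows the paper's proof essentially step by step. The steps are the same: a backward energy and Gronwall estimate giving $\|\varphi(0)\|^2\le Ce^{CG_0^2}\int_{1/4}^{3/4}\|\varphi(r)\|^2dr$, then a lower bound on the Carleman weight over $[1/4,3/4]$ of the form $\exp(-C\exp(C\lambda_g))$, then the Carleman estimate at $(s_g,\lambda_g)$, and finally $\lambda_g\le C(\exp(CG_0^2)+G_1^{20/3})$ to put the constant in the form $\mathcal C_g$. The only difference is cosmetic: you integrate $(\nabla g)^{\rm T}\varphi\cdot\varphi$ by parts directly, while the paper uses the identity $-(g\cdot\nabla)\varphi+(\nabla g)^{\rm T}\varphi=-D\varphi\,g+\nabla(g\cdot\varphi)$, and both yield the same $G_0\|\varphi\|\|\nabla\varphi\|$ bound.
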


    \begin{proof} The proof combines elementary energy estimates with  Proposition~\ref{prop Carleman}. Taking the $L^2(D)$ inner product of \eqref{eq dual} with $\varphi$ and using the identity in the preceding proof together with \eqref{eq g}, we obtain
	\begin{equation*}
	-\frac12\frac{d}{dr}\|\varphi(r)\|^2+\nu\|\curl\varphi(r)\|^2=\int_DD\varphi(r)g(r)\cdot\varphi(r)dx\leq\frac\nu2\|\curl\varphi(r)\|^2+CG_0^2\|\varphi(r)\|^2.
	\end{equation*}
    Using Gronwall's inequality, we then have
	\begin{equation*}
	\|\varphi(0)\|^2\leq C\exp(CG_0^2)\int_{1/4}^{3/4}\|\varphi(r)\|^2dr.
	\end{equation*}
    Meanwhile, on $\overline D\times[1/4,3/4]$, by the definitions of $\alpha$, $\xi$ and $s_g$, it follows that
	\begin{equation*}
	\sup_{\overline D\times[1/4,3/4]}\left(s_g^3\lambda_g^4e^{-2s_g\alpha}\xi^3\right)^{-1}\leq C\exp\left(C\exp(C\lambda_g)\right).
	\end{equation*}
    Moreover, $\lambda_g\leq C\left(\exp(CG_0^2)+G_1^{20/3}\right)$. Hence, after enlarging $\mathcal C_g$, one has
	\begin{equation*}
	\int_{1/4}^{3/4}\|\varphi(r)\|^2dr\leq\mathcal C_g\mathcal I(s_g,\lambda_g;\varphi).
	\end{equation*}

    Consequently, combining the preceding estimates with Proposition~\ref{prop Carleman} at $(s,\lambda)=(s_g,\lambda_g)$, we derive the desired inequality  \eqref{eq obs1}  and completes the proof of Proposition \ref{prop obs1}.
    \end{proof}

    \begin{theorem}\label{thm control1}
    For any $g$ satisfying \eqref{eq g}, there exists a constant $\mathcal C_g>0$ such that for any $z_0\in \mathcal H$, there exists a control
	\begin{equation*}
	\zeta\in H_0^1(0,1;L^2(\omega;\R^2))\cap L^2(0,1;H^2(\omega;\R^2))\cap C([0,1];H^1(\omega;\R^2))
	\end{equation*}
    such that the solution $z$ of \eqref{eq z} satisfies that $z(1)=0$ and
	\begin{align}\label{eq control1b}
	\|\zeta\|_{H^1(0,1;L^2(\omega))}+\|\zeta\|_{L^2(0,1;H^2(\omega))}+\|\zeta\|_{L^\infty(0,1;H^1(\omega))}&\leq\mathcal C_g\|z_0\|.
	\end{align}
    \end{theorem}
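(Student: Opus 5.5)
We will construct the control by a weighted penalized HUM (Fenchel--Rockafellar) argument built on the weighted observability inequality of Proposition~\ref{prop obs1}, and then recover the extra regularity of $\zeta$ by exploiting the structure $\zeta=\chi\,\Theta_g\,\varphi$ together with parabolic regularity of the adjoint system \eqref{eq dual}.

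First fix a cutoff $\chi\in C_c^\infty(\omega)$ with $\chi=1$ on $\omega_1$, and note that the Carleman estimate of Proposition~\ref{prop Carleman} remains valid with $\omega$ replaced by a slightly smaller set $\omega_2$ satisfying $\omega_1\Subset\omega_2\Subset\omega$; taking $\chi=1$ on $\omega_2$, Proposition~\ref{prop obs1} then holds with $\int_\omega$ replaced by $\int_\omega\chi^2$. For $w\in\mathcal H$, let $\varphi=\varphi_w$ solve \eqref{eq dual} and consider the convex functional
\begin{equation*}
J^*(w)=\frac12\int_0^1\int_\omega\chi^2\Theta_g|\varphi_w|^2dxdr+\langle z_0,\varphi_w(0)\rangle .
\end{equation*}
By the observability inequality $J^*$ is coercive on the completion of $\mathcal H$ under the weighted norm. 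Its minimizer $\widehat w$ yields the control $\zeta=\chi^2\Theta_g\widehat\varphi$, where $\widehat\varphi=\varphi_{\widehat w}$. The Euler--Lagrange equation, tested against $\varphi_w$ for arbitrary $w$, together with the duality identity $\langle z(1),w\rangle-\langle z_0,\varphi(0)\rangle=\int\zeta\cdot\varphi$, gives $z(1)=0$. The energy bound follows from $J^*(\widehat w)\le J^*(0)=0$ and \eqref{eq obs1}:
\begin{equation*}
\int_0^1\int_\omega\chi^2\Theta_g|\widehat\varphi|^2\le\mathcal C_g\|z_0\|^2 .
\end{equation*}

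The main step, and the main obstacle, is upgrading the $L^2$ bound on $\Theta_g^{1/2}\widehat\varphi$ to the bound \eqref{eq control1b} for $\zeta=\chi^2\Theta_g\widehat\varphi$. The weight $\Theta_g$ vanishes to infinite order at $r=0,1$, and $-4s\alpha_*+2s\alpha^*\le -s\alpha_*$ up to a harmless factor, provided the construction of $\vartheta$ ensures $2\alpha^*<3\alpha_*$, which is the standard condition in \cite{Guerrero-06}. The strategy is to introduce a second weight $\widetilde\Theta(r)$, slightly weaker than $\Theta_g$, such that
\begin{equation*}
\widetilde\Theta\lesssim\Theta_g^{1/2},\qquad |\partial_r\widetilde\Theta|\le\mathcal C_g\Theta_g^{1/2}.
\end{equation*}
Then apply weighted energy estimates to $\widetilde\Theta\widehat\varphi$, which solves \eqref{eq dual} with source $\partial_r\widetilde\Theta\,\widehat\varphi$, vanishes at both endpoints, and has coefficients controlled by $G_0,G_1$. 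Standard $L^2$-$H^1$ and $H^1$-$H^2$ parabolic estimates under the Navier-slip conditions then control
\begin{equation*}
\widetilde\Theta\widehat\varphi\in L^2(0,1;\mathcal H^2)\cap H^1(0,1;\mathcal H)\cap L^\infty(0,1;\mathcal H^1).
\end{equation*}
These estimates require $\partial_rg\in L^2L^4$ and $g\in L^\infty$, which holds since the term $(\nabla g)^{\rm T}\varphi$ is handled after differentiating in time, or by working with $\curl\varphi$; alternatively one can directly use the weighted $\partial_r\varphi$ and $\Delta\varphi$ terms already present in $\mathcal I(s,\lambda;\varphi)$. Since $\Theta_g/\widetilde\Theta$ is bounded with bounded time derivative, up to $\mathcal C_g$, all terms in \eqref{eq control1b} are controlled and $\zeta(0)=\zeta(1)=0$, giving $\zeta\in H^1_0$. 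Continuity into $H^1$ follows by interpolation between $L^2H^2$ and $H^1L^2$.

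Carrying out the regularity bootstrap with only $\partial_rg\in L^2L^4$ while keeping every constant of the double-exponential form $\mathcal C_g$ is the most delicate point. If it fails, I would fall back on a time-derivative estimate for $\partial_r(\widetilde\Theta\widehat\varphi)$ at the $L^2$ level, which needs only $\partial_rg\,\varphi\in L^2L^2$; this follows from $L^4$ Sobolev bounds on $\varphi\in L^\infty\mathcal H^1$.
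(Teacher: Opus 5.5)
Your architecture matches the paper's: a weighted HUM control $\Theta_g\varphi$ localized in $\omega$, with regularity read off from the equation satisfied by a time-weighted adjoint. However, the regularity step, which is the real content of the theorem, does not close with the calibration you impose. Since $\widetilde\Theta$ depends only on $r$, the source $\partial_r\widetilde\Theta\,\widehat\varphi$ lives on all of $Q=D\times(0,1)$. Your condition $|\partial_r\widetilde\Theta|\le\mathcal C_g\Theta_g^{1/2}$ therefore only bounds its $L^2(Q)$-norm by $\int_0^1\int_D\Theta_g|\widehat\varphi|^2dxdr$.

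Your energy bound, however, is an observation on $\omega$. The only way to reach all of $D$ is the left-hand side $\mathcal I(s_g,\lambda_g;\widehat\varphi)$ of the Carleman estimate. Its zeroth-order weight $s^3\lambda^4e^{-2s\alpha}\xi^3$ is smallest near $\partial D$, where $\alpha=\alpha^*$. Write $E=e^{\lambda_gK}$ and $\ell=r^4(1-r)^4$. Up to polynomial factors, $\Theta_g\approx e^{-2s(E-1)^2/\ell}$, while $e^{-2s\alpha^*}=e^{-2s(E^2-1)/\ell}$. Hence $\Theta_g e^{2s\alpha^*}\approx e^{4s(E-1)/\ell}\to\infty$ as $r\to0,1$, and the integral you need is not controlled by $\mathcal I$. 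Near $r=1$ nothing else controls it either: the adjoint is propagated backward from a terminal datum which, for a minimizer in the completion, need not lie in $\mathcal H$. Your comparison of $-4s\alpha_*+2s\alpha^*$ with $-s\alpha_*$ is beside the point; the relevant comparison is with $\alpha^*$.

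The missing idea is to calibrate the time weight against the Carleman left-hand side rather than the observation term. You need $|\partial_r\widetilde\Theta|^2\le Cs^3\lambda^4e^{-2s\alpha}\xi^3$ on $Q$, together with $\Theta_g\le C\widetilde\Theta$. The choice $\widetilde\Theta=\Theta_g$ works, and this is exactly Lemma~\ref{lemma theta}. The \emph{squared} weight carries $e^{-4s(E-1)^2/\ell}$, and $4(E-1)^2-2(E^2-1)=2(E-1)(E-3)>0$, so $|\Theta_g'|^2$ is dominated by $e^{-2s\alpha}$ uniformly in $x$. Then $\psi=\Theta_g\varphi$ vanishes at $r=0,1$ and solves the adjoint system with source $-\Theta_g'\varphi$. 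This source satisfies $\|\Theta_g'\varphi\|_{L^2(Q)}^2\le C\mathcal I\le C\int_0^1\int_\omega\Theta_g|\varphi|^2$, and Lemma~\ref{lemma l1} then yields all three norms in \eqref{eq control1b}. With the same calibration your ``alternative'' also works directly. Indeed $s\,\Theta_g^2e^{2s\alpha}\xi$ is bounded, so $\Theta_g\partial_r\varphi$ and $\Theta_g\Delta\varphi$ are controlled by the $s^{-1}e^{-2s\alpha}\xi^{-1}$ terms of $\mathcal I$.

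Two smaller points. First, your worry about $\partial_rg$ is unnecessary. Since $-(g\cdot\nabla)\psi+(\nabla g)^{\rm T}\psi=-(\nabla\psi+(\nabla\psi)^{\rm T})g+\nabla(g\cdot\psi)$ and the gradient is absorbed into the pressure, testing with $\partial_r\psi$ needs only $\|g\|_{L^\infty(Q)}$. Second, minimizing on the completion obliges you to justify that $\widehat\varphi$ solves the adjoint system and satisfies the Carleman estimate. The paper avoids this by solving the $\varepsilon$-penalized problem, where $\varphi_\varepsilon(1)\in\mathcal H$ and the bounds are uniform in $\varepsilon$, and then passing to a weak limit.
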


    \begin{proof} The proof is divided into four steps.
    
    \vspace{0.3em}
    \noindent {\it Step 1. Forward and adjoint solution operators.}
    For $u,v\in \mathcal H^1$, define
	\begin{equation*}
	a_g(r;u,v):=\nu\int_D\curl u\curl vdx+\int_D(g(r)\cdot\nabla)u\cdot vdx-\int_D(u\cdot\nabla)v\cdot g(r)dx.
	\end{equation*}
    Applying standard estimates, we have
	\begin{equation*}
	a_g(r;u,u)\geq\frac\nu2\|\curl u\|^2-CG_0^2\|u\|^2.
	\end{equation*}
    
    Therefore, for any $f\in L^2(Q;\R^2)$, standard parabolic theory gives a unique weak solution of the corresponding forward equation in
	\begin{equation*}
	L^2(0,1;\mathcal H^1)\cap H^1(0,1;(\mathcal H^1)')\hookrightarrow C([0,1];\mathcal H),
	\end{equation*}
    together with the energy estimate
	\begin{equation*}
	\|z\|_{C([0,1];\mathcal H)}^2+\|z\|_{L^2(0,1;\mathcal H^1)}^2\leq C\exp(CG_0^2)\left(\|z_0\|^2+\|f\|_{L^2(Q)}^2\right).
	\end{equation*}

    We introduce the weighted control space
	\begin{equation*}
	\mathcal U_g:=\left\{\zeta:\Theta_g^{-1/2}\zeta\in L^2(\omega\times(0,1);\R^2)\right\},\quad\|\zeta\|_{\mathcal U_g}^2:=\int_0^1\int_\omega\Theta_g^{-1}|\zeta|^2dxdr.
	\end{equation*}    
    By Lemma~\ref{lemma theta}, $\Theta_g$ is positive and bounded on $(0,1)$, $\Theta_g(0)=\Theta_g(1)=0$. Thus $\mathcal U_g$ is a Hilbert space and 
    \begin{equation*}
    \|\zeta\|_{L^2(\omega\times(0,1))}
    \leq \|\Theta_g\|_{L^\infty(0,1)}^{1/2}
    \|\zeta\|_{\mathcal U_g},
    \end{equation*}
    In particular, $\mathcal U_g$ is continuously embedded into the usual $L^2$ control space.
    
    Let $\bar{z}$ be the solution of \eqref{eq z} with zero control, and let $\mathcal A_g\zeta$ be the value at $r=1$ of the solution with zero initial value and control $\zeta$. The preceding energy estimate shows that $\mathcal A_g\in\mathcal L(\mathcal U_g;\mathcal H)$, and the solution corresponding to $\zeta$ satisfies $z_\zeta(1)=\bar{z}(1)+\mathcal A_g\zeta$.

    For $ w\in \mathcal H$, let $\varphi$ be the solution of \eqref{eq dual}. The forward-adjoint duality gives
	\begin{equation*}
	\langle\mathcal A_g\zeta, w\rangle=\int_0^1\int_\omega\zeta\cdot\varphi dxdr=\langle\zeta,\Theta_g\varphi\rangle_{\mathcal U_g}.
	\end{equation*}
    Thus $\mathcal A_g^* w=\Theta_g\varphi$ on $\omega\times(0,1)$.

    \vspace{0.6em}

    \noindent {\it Step 2. Fenchel--Rockafellar duality.}
    For $\varepsilon>0$, define two convex and continuous functions by
	\begin{align*}
	&F\colon\mathcal U_g\rightarrow\R^+,\quad F(\zeta)=\frac12\|\zeta\|_{\mathcal U_g}^2,\quad G\colon \mathcal H\rightarrow\R^+,\quad G(w)=\frac1{2\varepsilon}\|w+\bar{z}(1)\|^2.
	\end{align*}

    We consider the minimization problem
	\begin{equation*}
	\inf_{\zeta\in\mathcal U_g}J_\varepsilon(\zeta)=\inf_{\zeta\in\mathcal U_g}\left(F(\zeta)+G(\mathcal A_g\zeta)\right).
	\end{equation*}
    The Fenchel conjugates of $F$ and $G$ are given by
	\begin{align*}
	F^*(\zeta)=\sup_{\widetilde\zeta\in\mathcal U_g}\left(\langle\zeta,\widetilde\zeta\rangle_{\mathcal U_g}-F(\widetilde\zeta)\right)=\frac12\|\zeta\|_{\mathcal U_g}^2,\quad G^*( w)=\sup_{\widetilde w\in \mathcal H}\left(\langle w,\widetilde w\rangle-G(\widetilde w)\right)=\frac\varepsilon2\| w\|^2-\langle w,\bar{z}(1)\rangle.
	\end{align*}
    After changing the sign of the dual variable, the corresponding dual problem is
	\begin{equation*}
	\sup_{ w\in \mathcal H}J_\varepsilon^*( w)=\sup_{ w\in \mathcal H}\left(-\frac12\|\mathcal A_g^* w\|_{\mathcal U_g}^2-\frac\varepsilon2\| w\|^2+\langle w,\bar{z}(1)\rangle\right).
	\end{equation*}

    The primal functional is coercive, weakly lower semicontinuous and strictly convex, and hence admits a unique minimizer. The dual functional is weakly upper semicontinuous, strictly concave and satisfies $J_\varepsilon^*(\varphi_1)\to-\infty$ as $\|\varphi_1\|\to\infty$, so it also admits a unique maximizer. Since $G$ is continuous on $\mathcal H$, the Fenchel--Rockafellar theorem \cite{Rockafellar-67,ET-99} yields
    \begin{equation*}
    \inf_{\zeta\in\mathcal U_g}J_\varepsilon(\zeta)
    =
    \sup_{\varphi_1\in \mathcal H}J_\varepsilon^*(\varphi_1).
    \end{equation*} 

    Let $\zeta_\varepsilon$ be the minimizer and $z_\varepsilon$ the corresponding state. We denote the maximizer by $\varphi_\varepsilon(1)$, where $\varphi_\varepsilon$ is the solution of \eqref{eq dual}. The Fenchel dual relations give
	\begin{align*}
	\zeta_\varepsilon=-\mathcal A_g^*\varphi_\varepsilon(1)=-\Theta_g\varphi_\varepsilon,\quad 
	(\mathcal A_g\mathcal A_g^*+\varepsilon)\varphi_\varepsilon(1)=\bar{z}(1).
	\end{align*}

    Consequently,
	\begin{equation*}
	z_\varepsilon(1)=\bar{z}(1)+\mathcal A_g\zeta_\varepsilon=\bar{z}(1)-\mathcal A_g\mathcal A_g^*\varphi_\varepsilon(1)=\varepsilon\varphi_\varepsilon(1).
	\end{equation*}
    Moreover, the duality relation and the forward-adjoint identity imply
	\begin{equation*}
	\|\zeta_\varepsilon\|_{\mathcal U_g}^2+\frac1\varepsilon\|z_\varepsilon(1)\|^2=\langle\varphi_\varepsilon(1),\bar{z}(1)\rangle=\langle\varphi_\varepsilon(0),z_0\rangle.
	\end{equation*}
    By Proposition \ref{prop obs1} and the identity for $\mathcal A_g^*$, one has
	\begin{equation*}
	\|\varphi_\varepsilon(0)\|^2\leq\mathcal C_g\int_0^1\int_\omega\Theta_g|\varphi_\varepsilon|^2dxdr=\mathcal C_g\|\zeta_\varepsilon\|_{\mathcal U_g}^2.
	\end{equation*}

    Combining the last two estimates and applying Young's inequality, we obtain
	\begin{equation}\label{eq HUM2}
	\|\zeta_\varepsilon\|_{\mathcal U_g}^2+\frac1\varepsilon\|z_\varepsilon(1)\|^2\leq\mathcal C_g\|z_0\|^2.
	\end{equation} 
    
    \vspace{0.6em}

    \noindent {\it Step 3. Regularity of the controls.}
    Set $\psi_\varepsilon=\Theta_g\varphi_\varepsilon$. By \eqref{eq theta1}, $\psi_\varepsilon(0)=\psi_\varepsilon(1)=0$, and $\psi_\varepsilon$ satisfies
	\begin{equation*}
	\begin{cases}
	-\partial_r\psi_\varepsilon-\nu\Delta\psi_\varepsilon-(g\cdot\nabla)\psi_\varepsilon+(\nabla g)^{\rm T}\psi_\varepsilon+\nabla\rho_\varepsilon=-\Theta_g'\varphi_\varepsilon,\quad x\in D,\;r\in(0,1),\\
	\dvg\psi_\varepsilon=0,\\
	\psi_\varepsilon\cdot n=0,\quad\curl\psi_\varepsilon=0,\quad x\in\partial D,\\
	\psi_\varepsilon(1)=0.
	\end{cases}
	\end{equation*}

    Lemma \ref{lemma theta} and Proposition \ref{prop Carleman} imply
	\begin{equation*}
	\|\Theta_g'\varphi_\varepsilon\|_{L^2(Q)}^2\leq C\mathcal I(s_g,\lambda_g;\varphi_\varepsilon)\leq C\|\zeta_\varepsilon\|_{\mathcal U_g}^2.
	\end{equation*}
    Applying Lemma \ref{lemma l1} to the equation for $\psi_\varepsilon$, we obtain
	\begin{equation*}
	\|\partial_r\psi_\varepsilon\|_{L^2(Q)}+\|\psi_\varepsilon\|_{L^2(0,1;\mathcal H^2)}+\|\psi_\varepsilon\|_{L^\infty(0,1;\mathcal H^1)}\leq\mathcal C_g\|\zeta_\varepsilon\|_{\mathcal U_g}.
	\end{equation*}
    Since $\zeta_\varepsilon=-\psi_\varepsilon$ on $\omega\times(0,1)$, estimate \eqref{eq HUM2} gives
	\begin{equation}\label{eq HUM3}
	\|\zeta_\varepsilon\|_{H^1(0,1;L^2(\omega))}+\|\zeta_\varepsilon\|_{L^2(0,1;H^2(\omega))}+\|\zeta_\varepsilon\|_{L^\infty(0,1;H^1(\omega))}\leq\mathcal C_g\|z_0\|.
	\end{equation}
    In particular, the endpoint values of $\psi_\varepsilon$ show that $\zeta_\varepsilon\in H_0^1(0,1;L^2(\omega;\R^2))$. 

    \vspace{0.6em}

    \noindent {\it Step 4. Passage to the limit.}
    Let $\varepsilon_n\downarrow0$. By \eqref{eq HUM3}, after taking a subsequence, there exists a function $\zeta$ such that
	\begin{align*}
	\zeta_{\varepsilon_n}&\rightharpoonup\zeta\quad{\rm in}\ H^1(0,1;L^2(\omega;\R^2))\cap L^2(0,1;H^2(\omega;\R^2)),\\
	\zeta_{\varepsilon_n}&\overset{*}{\rightharpoonup}\zeta\quad{\rm in}\ L^\infty(0,1;H^1(\omega;\R^2)).
	\end{align*}

    Since $H_0^1(0,1;L^2(\omega;\R^2))$ is weakly closed, the limit $\zeta$ belongs to this space. Moreover,
	\begin{equation*}
	H^1(0,1;L^2(\omega;\R^2))\cap L^2(0,1;H^2(\omega;\R^2))\hookrightarrow C([0,1];H^1(\omega;\R^2)).
	\end{equation*}
    Therefore, $\zeta$ has the regularity required in Theorem \ref{thm control1}, while \eqref{eq control1b} follows from \eqref{eq HUM3} and lower semicontinuity.

    Let $z$ be the solution of \eqref{eq z} corresponding to $\zeta$. Since $\zeta_{\varepsilon_n}\rightharpoonup\zeta$ in $L^2(0,1;L^2(\omega;\R^2))$, and the control-to-final-state map from $L^2(0,1;L^2(\omega;\R^2))$ to $\mathcal H$ is bounded and linear, we have $z_{\varepsilon_n}(1)\rightharpoonup z(1)$ in $\mathcal H$. On the other hand, \eqref{eq HUM2} gives 
    \begin{equation*}
    \|z_{\varepsilon_n}(1)\|
    \leq\mathcal C_g\varepsilon_n^{1/2}\|z_0\|
    \longrightarrow0.
    \end{equation*}
    
    Consequently, $z(1)=0$.
    This completes the proof of Theorem \ref{thm control1}.
    \end{proof}

    \subsection{Control for the coupled linearized system}\label{Sec 3.2}
    With Theorem~\ref{thm control1} at hand, we now adapt the localized fluid control to the coupled system. We first truncate the control to finitely many Dirichlet modes and then lift the truncated path through the heat equation, obtaining the low-frequency and asymptotic null-control estimates in Propositions~\ref{prop control2} and \ref{prop control3}.
    
    Recall that $\mathsf P_N$ denotes the projection in $L^2(\omega;\R^2)$ onto the span of the first $N$ Dirichlet modes of $-\Delta_\omega$.  

    \begin{proposition}\label{prop control2}
    For any $g$ satisfying \eqref{eq g}, there exists a constant $\mathcal C_g>0$ such that the following holds. For any $N\in\N^+$ and $z_0\in \mathcal H^1$, there exists a control $\zeta_N\in H_0^1(0,1;\mathsf P_NL^2(\omega;\R^2))$ such that the solution $z_N$ of \eqref{eq z} with $\zeta=\zeta_N$ satisfies that
	\begin{align}
	\|z_N(1)\|_{\mathcal H^1}\leq\mathcal C_g\lambda_{N+1}^{-1/4}\|z_0\|_{\mathcal H^1},\label{eq control2a}\\
	\|\zeta_N\|_{H^1(0,1;L^2(\omega))}\leq\mathcal C_g\|z_0\|_{\mathcal H^1}.\label{eq control2b}
	\end{align}
    \end{proposition}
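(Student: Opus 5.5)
Take the control $\zeta$ from Theorem~\ref{thm control1} for the initial datum $z_0\in\mathcal H^1\subset\mathcal H$. It satisfies $z(1)=0$, $\zeta\in H_0^1(0,1;L^2(\omega))\cap L^2(0,1;H^2(\omega))$, and \eqref{eq control1b}. Set $\zeta_N:=\mathsf P_N\zeta$. Since $\mathsf P_N$ acts only in space and is an orthogonal projection on $L^2(\omega)$, it commutes with $\partial_r$ and preserves the endpoint values. Hence $\zeta_N\in H_0^1(0,1;\mathsf P_NL^2(\omega;\R^2))$, and
\[
\|\zeta_N\|_{H^1(0,1;L^2(\omega))}\le\|\zeta\|_{H^1(0,1;L^2(\omega))}\le\mathcal C_g\|z_0\|\le\mathcal C_g\|z_0\|_{\mathcal H^1},
\]
which is \eqref{eq control2b}.

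For \eqref{eq control2a}, let $e=z-z_N$. By linearity, $e$ solves \eqref{eq z} with zero initial datum and forcing $\overline{(I-\mathsf P_N)\zeta}$. Since $z(1)=0$, we have $z_N(1)=-e(1)$, so it suffices to bound $\|e(1)\|_{\mathcal H^1}$ by a norm of $(I-\mathsf P_N)\zeta$ that decays in $N$.

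The spectral gain comes from the spatial regularity of $\zeta$. Because $\zeta(r)\in H^2(\omega)$ but not necessarily $H^2\cap H_0^1(\omega)$, I would not use $\|(I-\mathsf P_N)\zeta\|\le\lambda_{N+1}^{-1}\|\zeta\|_{H^2}$. Instead, I would work in the Dirichlet scale and interpolate. I expect $H^2(\omega)\subset D((-\Delta_\omega)^{s})$ for every $s<1/4$ (the Dirichlet domains coincide with $H^{2s}$ for $2s<1/2$). This would give, for a.e.\ $r$,
\[
\|(I-\mathsf P_N)\zeta(r)\|_{L^2(\omega)}\le\lambda_{N+1}^{-s}\|\zeta(r)\|_{D((-\Delta_\omega)^s)}\le C\lambda_{N+1}^{-s}\|\zeta(r)\|_{H^1(\omega)}.
\]
Choosing $s$ close to $1/4$ would yield the rate only up to an $\epsilon$ loss, so to reach exactly $\lambda_{N+1}^{-1/4}$ I would instead bound the error in a weaker norm. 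One option is to estimate $(I-\mathsf P_N)\zeta$ in $L^2(0,1;H^{-1/2}$-type$)$ duals, using that $H^1_0$-duality against the parabolic regularity of $e$ costs half a derivative. Alternatively, the interpolation $\|(I-\mathsf P_N)f\|^2\le\lambda_{N+1}^{-1/2}\|(I-\mathsf P_N)f\|\,\|(-\Delta_\omega)^{1/2}\ldots\|$ can be combined with the Hardy-type bound $\|f\|_{D((-\Delta)^{1/4})}\lesssim\|f\|_{H^{1/2}_{00}}$. Which variant closes cleanly is the main obstacle, since the boundary of $\omega$ prevents using the full $H^2$ regularity.

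For the parabolic step, I would use maximal regularity for the linearized Navier--Stokes operator with Navier-slip conditions. This is the estimate of Lemma~\ref{lemma l1}, applied forward in time:
\[
\|e\|_{L^\infty(0,1;\mathcal H^1)}\le\mathcal C_g\|(I-\mathsf P_N)\zeta\|_{L^2(0,1;L^2(\omega))}.
\]
Combined with the tail bound (in $L^2_rL^2_x$, or in the weaker norm with a correspondingly smoothing estimate), this gives
\[
\|z_N(1)\|_{\mathcal H^1}\le\mathcal C_g\lambda_{N+1}^{-1/4}\bigl(\|\zeta\|_{L^\infty H^1}+\|\zeta\|_{L^2H^2}\bigr)\le\mathcal C_g\lambda_{N+1}^{-1/4}\|z_0\|_{\mathcal H^1}.
\]
The dependence of the constants on $g$ is absorbed into $\mathcal C_g$, because the energy and regularity constants involve only $\exp(CG_0^2)$ and powers of $G_0,G_1$.
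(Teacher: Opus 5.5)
Your skeleton is the paper's: take the exact null control $\zeta$ from Theorem~\ref{thm control1} and set $\zeta_N=\mathsf P_N\zeta$, which gives \eqref{eq control2b} immediately. Then bound $z_N(1)=-e(1)$ by a linear parabolic estimate in terms of $\|(I-\mathsf P_N)\zeta\|_{L^2(0,1;L^2(\omega))}$.

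The gap is the one step that produces the rate. You never prove $\|(I-\mathsf P_N)f\|_{L^2(\omega)}\le C\lambda_{N+1}^{-1/4}\|f\|_{H^1(\omega)}$, and none of your candidate routes gives it. You correctly locate the obstruction at $\partial\omega$, but:
\begin{itemize}
\item The embedding $H^1(\omega)\subset D((-\Delta_\omega)^s)$, $s<1/4$, yields only $\lambda_{N+1}^{-s}$, with a constant that blows up as $s\to1/4$. That rate would suffice downstream, where only decay to zero is used, but it is not \eqref{eq control2a}.
\item The $H^{1/2}_{00}$ route is inapplicable. $\zeta(r)$ is a limit of restrictions $-\Theta_g\varphi_\varepsilon|_\omega$ of functions defined on all of $D$, so it need not vanish on $\partial\omega$, and $H^1(\omega)\not\subset H^{1/2}_{00}(\omega)=D((-\Delta_\omega)^{1/4})$.
\item The dual-norm variant is not carried out. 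Note also that with forcing only in $L^2(0,1;H^{-\sigma})$ the state at time $1$ lies only in $H^{1-\sigma}$, so weakening the norm on the tail does not by itself give an $\mathcal H^1$ bound on $e(1)$.
\end{itemize}

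The missing idea is the boundary-collar argument of Lemma~\ref{lemma PN}. It is a $K$-functional bound, not a fractional-domain embedding. $H^1(\omega)$ fails to embed in $(L^2(\omega),H_0^1(\omega))_{1/2,2}$, but it does embed in $(L^2(\omega),H_0^1(\omega))_{1/2,\infty}$, and membership there is exactly a $\lambda_{N+1}^{-1/4}$ tail bound.

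Concretely, take $\chi_\varepsilon\in C_c^\infty(\omega)$ with $\chi_\varepsilon=0$ on $\{\dist(\cdot,\partial\omega)\le\varepsilon\}$, $\chi_\varepsilon=1$ on $\{\dist(\cdot,\partial\omega)\ge2\varepsilon\}$, and $|\nabla\chi_\varepsilon|\le C\varepsilon^{-1}$. Integrating along normals and using the trace theorem, the $L^2$ mass of $f$ in the $2\varepsilon$-collar is at most $C\varepsilon^{1/2}\|f\|_{H^1(\omega)}$. Hence $\|f-\chi_\varepsilon f\|\le C\varepsilon^{1/2}\|f\|_{H^1(\omega)}$, and $\chi_\varepsilon f\in H_0^1(\omega)$ with $\|\nabla(\chi_\varepsilon f)\|\le C\varepsilon^{-1/2}\|f\|_{H^1(\omega)}$. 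Therefore
\begin{equation*}
\|(I-\mathsf P_N)f\|\le\|f-\chi_\varepsilon f\|+\lambda_{N+1}^{-1/2}\|\nabla(\chi_\varepsilon f)\|\le C\left(\varepsilon^{1/2}+\lambda_{N+1}^{-1/2}\varepsilon^{-1/2}\right)\|f\|_{H^1(\omega)},
\end{equation*}
and choosing $\varepsilon\sim\lambda_{N+1}^{-1/2}$ gives \eqref{eq PN}. Applied for a.e.\ $r$, this uses only $\|\zeta\|_{L^2(0,1;H^1(\omega))}$.

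A smaller correction: the parabolic bound you need is Lemma~\ref{lemma l2}, not Lemma~\ref{lemma l1}. Lemma~\ref{lemma l1} treats the adjoint operator, where $-(g\cdot\nabla)\psi+(\nabla g)^{\rm T}\psi=-(\nabla\psi+(\nabla\psi)^{\rm T})g+\nabla(g\cdot\psi)$ puts no derivative on $g$. In the forward error equation, the term $(e\cdot\nabla)g$ is not controlled by $\|g\|_{L^\infty}$ alone. Lemma~\ref{lemma l2} handles it through a modified energy containing $\int_D(v\cdot\nabla)v\cdot g\,dx$, at the cost of $\|\partial_rg\|_{L^2(0,1;L^4(D))}$.
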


    \begin{proof} 
    By Theorem \ref{thm control1}, there exists a control $\zeta$ such that the solution $z$ of \eqref{eq z} satisfies that 
	\begin{equation*}
	z(1)=0,\quad \|\zeta\|_{H^1(0,1;L^2(\omega))}+\|\zeta\|_{L^2(0,1;H^1(\omega))}\leq\mathcal C_g\|z_0\|_{\mathcal H^1}.
	\end{equation*}
    Thus by setting $\zeta_N=\mathsf P_N\zeta$, we derive that
	\begin{equation*}
	\zeta_N\in H_0^1(0,1;\mathsf P_NL^2(\omega;\R^2)),\quad\|\zeta_N\|_{H^1(0,1;L^2(\omega))}\leq\mathcal C_g\|z_0\|_{\mathcal H^1},
	\end{equation*}
    which proves \eqref{eq control2b}.

    Additionally, let $z_N$ be the solution corresponding to $\zeta_N$, and set $e_N=z_N-z$. Then $e_N(0)=0$ and $e_N$ solves the linearized equation with the force $\overline{(\mathsf P_N-I)\zeta}$. Since $z(1)=0$, Lemma \ref{lemma l2}, the spectral estimate \eqref{eq PN} and \eqref{eq control1b} give
	\begin{align*}
	\|z_N(1)\|_{\mathcal H^1}=\|e_N(1)\|_{\mathcal H^1}&\leq C\exp\left(C(G_0^2+G_1)\right)\|(I-\mathsf P_N)\zeta\|_{L^2(0,1;L^2(\omega))}\\&
    \leq\mathcal C_g\lambda_{N+1}^{-1/4}\|\zeta\|_{L^2(0,1;H^1(\omega))}\leq\mathcal C_g\lambda_{N+1}^{-1/4}\|z_0\|_{\mathcal H^1}.
	\end{align*}
    This ensures \eqref{eq control2a} and completes the proof of Proposition \ref{prop control2}.
    \end{proof}

    \vspace{0.6em}

    Based on Proposition \ref{prop control2}, we can now establish an asymptotic null controllability result for the coupled system by lifting the low-frequency control to the heat component.

    \begin{proposition}\label{prop control3}
    For any $g$ satisfying \eqref{eq g}, there exists a constant $\mathcal C_g>0$ such that the following holds. For any $N\in\N^+$, $\delta\in(0,1/4)$ and $V_0=(v_0,p_0)\in \mathcal X$, there exists $h_{N,\delta}\in L^2(0,1;L^2(\omega;\R^2))$such that the solution $(v_{N,\delta},p_{N,\delta})$ of \eqref{eq V} by replacing $\mathsf P_N\Lambda h$ with $\mathsf P_Nh_{N,\delta}$ satisfies that $\mathsf P_Np_{N,\delta}(1)=0$,
	\begin{align}
	\|(v_{N,\delta}(1),p_{N,\delta}(1))\|_{\mathcal X}&\leq\mathcal C_g\left(\lambda_{N+1}^{-1/4}+\delta^{1/2}\right)\|V_0\|_{\mathcal X},\label{eq control3a}\\
	\|h_{N,\delta}\|_{L^2(0,1;L^2(\omega))}&\leq\mathcal C_g(\lambda_N+\delta^{-1/2})\|V_0\|_{\mathcal X}.\label{eq control3b}
	\end{align}
    \end{proposition}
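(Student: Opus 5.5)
We follow Step 3 of the strategy sketched above: lift the low-frequency fluid control of Proposition~\ref{prop control2} through the heat component. Given $V_0=(v_0,p_0)$, first split off the parts of $p_0$. The high modes $(I-\mathsf P_N)p_0$ evolve freely, since the control acts only on the first $N$ modes. Hence $(I-\mathsf P_N)p(r)=e^{r\Delta_\omega}(I-\mathsf P_N)p_0$, whose contribution at time $1$ is bounded by $e^{-\lambda_{N+1}}\|p_0\|$. Its forcing effect on $v$ will be treated as an error term.

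Next, apply Proposition~\ref{prop control2} to a suitable initial datum for the velocity, obtaining $\zeta_N\in H_0^1(0,1;\mathsf P_NL^2(\omega))$. Let $\chi_\delta$ be a smooth cutoff with $\chi_\delta=1$ on $[\delta/2,1-\delta/2]$, $\chi_\delta(0)=\chi_\delta(1)=0$ and $|\chi_\delta'|\lesssim\delta^{-1}$, and let $\theta_\delta$ be a cutoff equal to $1$ near $0$ and supported in $[0,\delta/2]$. Define the heat path
\[
\widehat\zeta_{N,\delta}(r)=\chi_\delta(r)\zeta_N(r)+\theta_\delta(r)e^{r\Delta_\omega}\mathsf P_Np_0 .
\]
Then $\widehat\zeta_{N,\delta}(0)=\mathsf P_Np_0$ and $\widehat\zeta_{N,\delta}(1)=0$, and we set $h_{N,\delta}=(\partial_r-\Delta_\omega)\widehat\zeta_{N,\delta}$, so that $\mathsf P_Np_{N,\delta}=\widehat\zeta_{N,\delta}$ and $\mathsf P_Np_{N,\delta}(1)=0$.

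For the bound \eqref{eq control3b}, the $\zeta_N$ part gives three contributions. First, $\|\partial_r\zeta_N\|\le\mathcal C_g\|z_0\|_{\mathcal H^1}$. Second, $\|\Delta_\omega\zeta_N\|\le\lambda_N\|\zeta_N\|$. Third, since $\zeta_N\in H_0^1$ we have $\|\zeta_N(r)\|\le r^{1/2}\|\partial_r\zeta_N\|_{L^2}$, so $\|\chi_\delta'\zeta_N\|_{L^2}\lesssim\delta^{-1}\cdot\delta^{1/2}\cdot\delta^{1/2}$, which is bounded. The $p_0$ part is handled more simply by taking $\theta_\delta e^{r\Delta_\omega}$: because $(\partial_r-\Delta_\omega)e^{r\Delta_\omega}=0$, it contributes only $\theta_\delta'e^{r\Delta_\omega}\mathsf P_Np_0$, of $L^2$ norm $\lesssim\delta^{-1/2}\|p_0\|$. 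Together these give the $\lambda_N+\delta^{-1/2}$ bound.

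For \eqref{eq control3a}, we compare $v_{N,\delta}$ with the solution $z_N$ forced by $\zeta_N$. The difference $e=v_{N,\delta}-z_N$ solves the linearized equation \eqref{eq z} with zero initial data. Its force is $(\chi_\delta-1)\zeta_N+\theta_\delta e^{r\Delta_\omega}\mathsf P_Np_0+e^{r\Delta_\omega}(I-\mathsf P_N)p_0$. The first two pieces are supported in time intervals of length $\delta$. By Lemma~\ref{lemma l2}, which controls $\|e(1)\|_{\mathcal H^1}$ by the $L^2(Q)$ norm of the force, they contribute $\mathcal C_g\delta^{1/2}(\|\zeta_N\|_{L^\infty L^2}+\|p_0\|)$, where $\|\zeta_N\|_{L^\infty L^2}\le\mathcal C_g\|z_0\|_{\mathcal H^1}$ by the $H^1$-in-time bound.

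The main obstacle is the high-mode free heat forcing $e^{r\Delta_\omega}(I-\mathsf P_N)p_0$. Its $L^2(Q)$ norm is $\lesssim\lambda_{N+1}^{-1/2}\|p_0\|$, which is acceptable. To avoid any loss from the $H^1$ target norm, I would absorb it into the initial datum of the fluid control problem. Concretely, define $z_0$ as the state at a small time of the free coupled evolution, or simply apply Proposition~\ref{prop control2} to $z_0=v_0$ and treat this forcing as a perturbation, using $\lambda_{N+1}^{-1/2}\le\lambda_{N+1}^{-1/4}$. Combining this with \eqref{eq control2a} for $z_N(1)$ and with the heat decay of the high modes of $p(1)$ gives \eqref{eq control3a}. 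One point to check is that Lemma~\ref{lemma l2} indeed yields $\mathcal H^1$ control at time $1$ from an $L^2$ force with zero initial data; parabolic smoothing should give this with a constant of the form $\mathcal C_g$.
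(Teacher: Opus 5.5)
Your proposal is correct and follows the paper's proof essentially step by step. It uses the same heat path $\chi_\delta\zeta_N+(\text{cutoff near }0)\,e^{r\Delta_\omega}\mathsf P_Np_0$, the same lift $h_{N,\delta}=(\partial_r-\Delta_\omega)\widehat\zeta_{N,\delta}$ with the endpoint Poincar\'e bound on $\chi_\delta'\zeta_N$, and the same comparison of $v_{N,\delta}$ with $z_N$ via Lemma~\ref{lemma l2}. The alternative you float, absorbing the high-mode heat forcing into the fluid initial datum, is unnecessary: the paper simply treats $e^{r\Delta_\omega}(I-\mathsf P_N)p_0$ as an $L^2$ forcing of norm $\lesssim\lambda_{N+1}^{-1/2}\|p_0\|$, and Lemma~\ref{lemma l2} does give $\mathcal H^1$ control at $r=1$ for zero initial data.
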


    \begin{proof}
    Let $N\in\N^+$, $\delta\in(0,1/4)$ and $V_0=(v_0,p_0)\in \mathcal X$ be arbitrarily fixed. Fix $\chi\in C^\infty(\R;[0,1])$ such that
	\begin{equation*}
	\chi(\tau)=0\quad{\rm for}\ \tau\leq 1/4,\quad\chi(\tau)=1\quad{\rm for}\ \tau\geq 1/2,
	\end{equation*}
    and put
	\begin{equation*}
	\chi^-_\delta(r)=\chi(r/\delta),\quad\chi^+_\delta(r)=\chi((1-r)/\delta),\quad\chi_\delta(r)=\chi^-_\delta(r)\chi^+_\delta(r).
	\end{equation*}
    Then $\chi_\delta=1$ on $[\delta/2,1-\delta/2]$ and $\chi_\delta=0$ on $(0,\delta/4)\cup(1-\delta/4,1)$. Moreover, for any Hilbert space $Y$ and $f\in H_0^1(0,1;Y)$, the endpoint Poincar\'e inequality gives
	\begin{equation*}
	\|(1-\chi_\delta)f\|_{L^2(0,1;Y)}\leq C\delta\|\partial_rf\|_{L^2(0,1;Y)},\quad\|\chi_\delta'f\|_{L^2(0,1;Y)}\leq C\|\partial_rf\|_{L^2(0,1;Y)}.
	\end{equation*}

    \vspace{0.6em}

    Invoking Proposition \ref{prop control2}, applied to the initial value $v_0$, there exist a control $\zeta_N$ and the corresponding solution $z_N$ such that
	\begin{equation*}
	\zeta_N\in H_0^1(0,1;\mathsf P_NL^2(\omega;\R^2)),\quad\|\zeta_N\|_{H^1(0,1;L^2(\omega))}+\lambda_{N+1}^{1/4}\|z_N(1)\|_{\mathcal H^1}\leq\mathcal C_g\|v_0\|_{\mathcal H^1}.
	\end{equation*}
    We define
	\begin{equation*}
	\widehat\zeta_{N,\delta}(r):=\chi_\delta(r)\zeta_N(r)+(1-\chi^-_\delta(r))e^{r\Delta_\omega}\mathsf P_Np_0.
	\end{equation*}
    The first term reproduces the truncated Navier--Stokes control away from the endpoints, whereas the second one connects the prescribed initial heat state $\mathsf P_Np_0$ to this control path. In particular,
	\begin{equation*}
	\widehat\zeta_{N,\delta}(0)=\mathsf P_Np_0,\quad\widehat\zeta_{N,\delta}(1)=0.
	\end{equation*}

    We now realize $\widehat\zeta_{N,\delta}$ as a trajectory of the heat equation by setting
    \begin{equation*}
        h_{N,\delta}:=\partial_r\widehat\zeta_{N,\delta}-\Delta_\omega\widehat\zeta_{N,\delta}.
    \end{equation*}
    Since $(\partial_r-\Delta_\omega)e^{r\Delta_\omega}\mathsf P_Np_0=0$, the control $h_{N,\delta}$ is $\mathsf P_N L^2(\omega;\R^2)$-valued and satisfies
	\begin{equation*}
	h_{N,\delta}=\chi_\delta(\partial_r\zeta_N-\Delta_\omega\zeta_N)+\chi_\delta'\zeta_N-(\chi^-_\delta)'e^{r\Delta_\omega}\mathsf P_Np_0.
	\end{equation*}
    Therefore,
	\begin{align*}
	\|h_{N,\delta}\|_{L^2(0,1;L^2(\omega))}&\leq C(1+\lambda_N)\|\zeta_N\|_{H^1(0,1;L^2(\omega))}+C\delta^{-1/2}\|p_0\|\\
	&\leq\mathcal C_g(\lambda_N+\delta^{-1/2})\|V_0\|_{\mathcal X},
	\end{align*}
    which proves \eqref{eq control3b}.

    \vspace{0.6em}

    Let $(v_{N,\delta},p_{N,\delta})$ be the solution corresponding to $h_{N,\delta}$. Since the control is $\mathsf P_NL^2(\omega;\R^2)$-valued, by the uniqueness for the heat equation, it follows that
	\begin{equation*}
	p_{N,\delta}(r)=\widehat\zeta_{N,\delta}(r)+e^{r\Delta_\omega}(I-\mathsf P_N)p_0.
	\end{equation*}
    In particular, $\mathsf P_Np_{N,\delta}(1)=0$. Moreover, by the spectral theorem,
	\begin{equation*}
	\|p_{N,\delta}(1)\|\leq e^{-\lambda_{N+1}}\|p_0\|,\quad\|e^{r\Delta_\omega}(I-\mathsf P_N)p_0\|_{L^2(0,1;L^2(\omega))}\leq C\lambda_{N+1}^{-1/2}\|p_0\|.
	\end{equation*}

    \vspace{0.6em}

    It remains to estimate the fluid error introduced by the endpoint modification and by the uncontrolled high-frequency heat tail. The cut-off estimates yield 
	\begin{align*}
	\|\widehat\zeta_{N,\delta}-\zeta_N\|_{L^2(0,1;L^2(\omega))}&\leq C\delta\|\partial_r\zeta_N\|_{L^2(0,1;L^2(\omega))}+C\delta^{1/2}\|p_0\|\leq\mathcal C_g\left(\delta\|v_0\|_{\mathcal H^1}+\delta^{1/2}\|p_0\|\right).
	\end{align*}
    Let $w_{N,\delta}=v_{N,\delta}-z_N$. Then $w_{N,\delta}(0)=0$, and $w_{N,\delta}$ satisfies the linearized Navier--Stokes equation with forcing $\overline{\widehat\zeta_{N,\delta}-\zeta_N+e^{r\Delta_\omega}(I-\mathsf P_N)p_0}$. Applying Lemma \ref{lemma l2}, we obtain
	\begin{equation*}
	\|w_{N,\delta}(1)\|_{\mathcal H^1}\leq\mathcal C_g\left(\delta\|v_0\|_{\mathcal H^1}+(\delta^{1/2}+\lambda_{N+1}^{-1/2})\|p_0\|\right).
	\end{equation*}
    
    Combining this estimate with the terminal bound for $z_N$ and the spectral decay of $p_{N,\delta}(1)$ gives \eqref{eq control3a}. This completes the proof of Proposition \ref{prop control3}.
    \end{proof}

    \subsection{Weak observability and regularized stabilization}\label{Sec 3.3}

    The asymptotic null control from Proposition~\ref{prop control3} is now converted into the stabilization estimate needed in the Malliavin argument. Control--observation duality yields the weak observability inequality in Proposition~\ref{prop obs2}, while Fenchel--Rockafellar duality applied to a regularized quadratic problem gives the explicit control and estimates in Theorem~\ref{thm control4}.

    \begin{proof}[Proof of Proposition \ref{prop obs2}]
    Let $Z_1=(w_1,q_1)\in \mathcal X$ be arbitrarily given, and take $\delta_N=\min\{1/8,\lambda_N^{-2}\}$. Since $\lambda_{N+1}\geq\lambda_N\geq\lambda_1>0$, one has
	\begin{equation*}
	\lambda_{N+1}^{-1/4}+\delta_N^{1/2}\leq C\lambda_N^{-1/4},\quad\lambda_N+\delta_N^{-1/2}\leq C\lambda_N.
	\end{equation*}
    By Proposition \ref{prop control3} with $\delta=\delta_N$, for any $V_0\in \mathcal X$, there exists a control $h_N\in H_{0,1}$ such that
	\begin{equation*}
	\|\mathcal J_gV_0+\mathcal R_{N,g}h_N\|_{\mathcal X}\leq\mathcal C_g\lambda_N^{-1/4}\|V_0\|_{\mathcal X},\quad\|h_N\|_{H_{0,1}}\leq\mathcal C_g\lambda_N\|V_0\|_{\mathcal X}.
	\end{equation*}
    
    On the other hand, let $(w(r),q(r))=\mathcal J_{r,1,g}^*Z_1$. From the definition of $\mathcal R_{N,g}$, we have
	\begin{equation*}
	(\mathcal R_{N,g}^*Z_1)(r)=\mathsf P_Nq(r).
	\end{equation*}
    Consequently, for this control $h_N$,
	\begin{align*}
	|\langle\mathcal J_g^*Z_1,V_0\rangle_{\mathcal X}|&=|\langle Z_1,\mathcal J_gV_0+\mathcal R_{N,g}h_N\rangle_{\mathcal X}-\langle\mathcal R_{N,g}^*Z_1,h_N\rangle_{H_{0,1}}|\\
	&\leq\mathcal C_g\left(\lambda_N^{-1/4}\|Z_1\|_{\mathcal X}+\lambda_N\|\mathsf P_Nq\|_{H_{0,1}}\right)\|V_0\|_{\mathcal X}.
	\end{align*}
    Finally, taking the supremum over all $V_0\in \mathcal X$ with $\|V_0\|_{\mathcal X}\leq1$ and recalling that $\mathcal J_g^*Z_1=(w(0),q(0))$, we obtain \eqref{eq obs2}. This completes the proof of Proposition \ref{prop obs2}.
    \end{proof}

    We now combine Proposition \ref{prop obs2} with the Fenchel--Rockafellar duality for a regularized quadratic minimization problem to prove Theorem \ref{thm control4}.

    \begin{proof}[Proof of Theorem \ref{thm control4}]
    Let us define two convex and continuous functions by
	\begin{align*}
	F\colon H_{0,1}\rightarrow\R^+,\quad F(h)=\frac12\|h\|_{H_{0,1}}^2,\quad G\colon \mathcal X\rightarrow\R^+,\quad G(V)=\frac1{2\beta}\|V+\mathcal J_gV_0\|_{\mathcal X}^2.
	\end{align*}
    
    We then consider the following minimization problem
	\begin{equation}\label{eq FR1}
	\inf_{h\in H_{0,1}}J(h)=\inf_{h\in H_{0,1}}\left(F(h)+G(\mathcal A_{N,g}h)\right).
	\end{equation}
    Note that $J$ is coercive and strictly convex, so problem \eqref{eq FR1} admits a unique solution. The Fenchel conjugates $F^*\colon H_{0,1}\rightarrow\R^+$ and $G^*\colon \mathcal X\rightarrow\R$ are given by
	\begin{align*}
	&F^*(h)=\sup_{\widetilde h\in H_{0,1}}\left(\langle h,\widetilde h\rangle_{H_{0,1}}-F(\widetilde h)\right)=\frac12\|h\|_{H_{0,1}}^2=F(h),\\
	&G^*(Z)=\sup_{\widetilde V\in \mathcal X}\left(\langle Z,\widetilde V\rangle_{\mathcal X}-G(\widetilde V)\right)=\frac\beta2\|Z\|_{\mathcal X}^2-\langle Z,\mathcal J_gV_0\rangle_{\mathcal X}.
	\end{align*}
    After changing the sign of the dual variable, the corresponding dual problem is given by
	\begin{equation}\label{eq FR2}
	\sup_{Z\in \mathcal X}J^*(Z)=\sup_{Z\in \mathcal X}\left(-G^*(Z)-F^*(-\mathcal A_{N,g}^*Z)\right)=\sup_{Z\in \mathcal X}\left(-\frac12\langle(\mathcal G_{N,g}+\beta)Z,Z\rangle_{\mathcal X}+\langle Z,\mathcal J_gV_0\rangle_{\mathcal X}\right).
	\end{equation}
    Clearly, $F$ and $G$ are finite and continuous. Thus, by the Fenchel--Rockafellar duality theorem \cite{Rockafellar-67,ET-99}, problems \eqref{eq FR1} and \eqref{eq FR2} admit solutions and
	\begin{equation*}
	\inf_{h\in H_{0,1}}J(h)=\sup_{Z\in \mathcal X}J^*(Z).
	\end{equation*}
    
    Since $\mathcal G_{N,g}$ is bounded, self-adjoint and non-negative and $\beta>0$, the functional in \eqref{eq FR2} is strictly concave, and its unique maximizer $Z^*\in \mathcal X$ is given by
	\begin{equation}\label{eq FR3}
	(\mathcal G_{N,g}+\beta)Z^*=\mathcal J_gV_0,\quad Z^*=(\mathcal G_{N,g}+\beta)^{-1}\mathcal J_gV_0.
	\end{equation}
    Meanwhile, using the Fenchel dual relation, the unique minimizer $h^*$ of $J$ is given by
	\begin{equation}\label{eq FR4}
	h^*=-\mathcal A_{N,g}^*Z^*=-\mathcal A_{N,g}^*(\mathcal G_{N,g}+\beta)^{-1}\mathcal J_gV_0.
	\end{equation}
    This agrees with the control $h$ defined in Theorem \ref{thm control4}. In what follows, we verify the stabilization estimates \eqref{eq control4a},\eqref{eq control4b}.

    By definition and the Fenchel duality relation, one has
	\begin{equation}\label{eq FR5}
	\frac12\|h\|_{H_{0,1}}^2+\frac1{2\beta}\|V(1)\|_{\mathcal X}^2=J(h)=J^*(Z^*)=\frac12\langle Z^*,\mathcal J_gV_0\rangle_{\mathcal X}=\frac12\langle\mathcal J_g^*Z^*,V_0\rangle_{\mathcal X}.
	\end{equation}
    Moreover, from our construction, it follows that
	\begin{equation}\label{eq FR6}
	V(1)=\mathcal J_gV_0+\mathcal A_{N,g}h=\mathcal J_gV_0-\mathcal G_{N,g}Z^*=\beta Z^*,
	\end{equation}
    where the last equality follows from \eqref{eq FR3}.

    Let $(w^*(r),q^*(r))=\mathcal J_{r,1,g}^*Z^*$. By the definitions of $\mathcal A_{N,g}$ and $\Lambda$, we have
	\begin{equation*}
	(\mathcal A_{N,g}^*Z^*)(r)=\Lambda\mathsf P_Nq^*(r).
	\end{equation*}
    Hence, by \eqref{eq FR4} and the definition of $\widehat b_N$,
	\begin{equation*}
	\|\mathsf P_Nq^*\|_{H_{0,1}}\leq\widehat b_N\|\Lambda\mathsf P_Nq^*\|_{H_{0,1}}=\widehat b_N\|h\|_{H_{0,1}}.
	\end{equation*}
    Applying the weak observability inequality in Proposition \ref{prop obs2}, the Cauchy--Schwarz inequality and \eqref{eq FR6}, it follows that
	\begin{equation}\label{eq FR7}
	\begin{aligned}
	|\langle\mathcal J_g^*Z^*,V_0\rangle_{\mathcal X}|&\leq\|\mathcal J_g^*Z^*\|_{\mathcal X}\|V_0\|_{\mathcal X}\leq\mathcal C_g\left(\lambda_N\|\mathsf P_Nq^*\|_{H_{0,1}}+\lambda_N^{-1/4}\|Z^*\|_{\mathcal X}\right)\|V_0\|_{\mathcal X}\\
	&\leq\mathcal C_g\left(\widehat b_N\lambda_N\|h\|_{H_{0,1}}+\beta^{-1}\lambda_N^{-1/4}\|V(1)\|_{\mathcal X}\right)\|V_0\|_{\mathcal X}.
	\end{aligned}
	\end{equation}
    Combining \eqref{eq FR5} and \eqref{eq FR7}, we obtain
	\begin{equation*}
	\|h\|_{H_{0,1}}^2+\frac1\beta\|V(1)\|_{\mathcal X}^2\leq\mathcal C_g\left(\widehat b_N\lambda_N\|h\|_{H_{0,1}}+\beta^{-1}\lambda_N^{-1/4}\|V(1)\|_{\mathcal X}\right)\|V_0\|_{\mathcal X}.
	\end{equation*}
    Using Young's inequality, one has
	\begin{equation*}
	\|h\|_{H_{0,1}}^2+\frac1\beta\|V(1)\|_{\mathcal X}^2\leq\mathcal C_g^2\left(\widehat b_N^2\lambda_N^2+\beta^{-1}\lambda_N^{-1/2}\right)\|V_0\|_{\mathcal X}^2,
	\end{equation*}
    which implies \eqref{eq control4a},\eqref{eq control4b} and completes the proof.
    \end{proof}

    \section{Exponential mixing for  Navier--Stokes equations}\label{Sec 4}

    With the stabilization results of the previous sections in hand, we now prove Theorems~\ref{thm 1} and \ref{thm 2}. We first verify the Lyapunov, asymptotic strong Feller, and irreducibility hypotheses of Theorem~\ref{thm HM} for the coupled Markov process, thereby obtaining Theorem~\ref{thm 2}. Theorem~\ref{thm 1} then follows by starting the heat component from its stationary law and projecting the resulting mixing statement onto the velocity component.

    \begin{theorem}\label{thm HM}{\rm(}\hspace{-0.1mm}{\rm\cite[Theorems 3.4, 4.5]{HM-08})}
    Let $Z(t,x)$ be a stochastic semiflow on a Hilbert space $X$, with $C^1$ dependence on $x\in X$, and let $\{P_t\}_{t\geq0}$ and $\{P_t^*\}_{t\geq0}$ be the associated Markov semigroups. Assume that 
    \begin{itemize}
        \item[(1)] There exist constants $C,\gamma>0$ and a decreasing function $\xi\colon[0,1]\to(0,\infty)$ with $\xi(1)<1$ such that for any $x\in X$, $r\in[1/4,3]$ and $t\in[0,1]$,
	\begin{equation}\label{eq HM1}
	\E\exp\left(r\gamma\|Z(t,x)\|_{X}^2\right)\left(1+\|\nabla_xZ(t,x)\|_{\mathcal L(X)}\right)\leq C\exp\left(r\gamma\xi(t)\|x\|_{X}^2\right).
	\end{equation}
        \item[(2)] There exist constants $C,\alpha>0$ such that for any Fr\'echet differentiable function $\varphi\colon X\to\R$, $x\in X$ and $t\geq0$,
	\begin{equation}\label{eq HM2}
	\|\nabla P_t\varphi(x)\|_{X}\leq C\exp\left(\tfrac{\gamma}{2}\|x\|_{X}^2\right)\left(\sqrt{P_t(|\varphi|^2)(x)}+e^{-\alpha t}\sqrt{P_t(\|\nabla\varphi\|_{X}^2)(x)}\right).
	\end{equation}
        \item[(3)] For any $\varepsilon,R>0$ and $r\in(0,1)$, there exists $T>0$ such that for any $t\geq T$,
	\begin{equation}\label{eq HM3}
	\inf_{x,x'\in B_{X}(0,R)}\sup_{\Gamma\in\mathscr C(P_t^*\delta_x,P_t^*\delta_{x'})}\Gamma\left\{(y,y')\in X\times X:\rho_r(y,y')\leq\varepsilon\right\}>0,
	\end{equation}
        where $\mathscr C(\mu,\mu')$ is the set of couplings of $\mu$ and $\mu'$, and
	\begin{equation*}
	\rho_s(x,y):=\inf_{\substack{\ell\in C^1([0,1];X),\\
	\ell(0)=x,\ \ell(1)=y}}\int_0^1\exp\left(s\gamma\|\ell(r)\|_{X}^2\right)\|\ell'(r)\|_{X}dr,\quad s\in(0,1].
	\end{equation*}
    \end{itemize}
    
    Then there exist constants $C,\alpha>0$ such that
	\begin{equation*}
	\rho_1(P_t^*\mu,P_t^*\mu')\leq Ce^{-\alpha t}\rho_1(\mu,\mu')\quad\forall\,\mu,\mu'\in\mathcal P(X),\;t\geq0.
	\end{equation*}
    In particular, $\{P_t^*\}_{t\geq0}$ admits a unique invariant measure $\mu_*\in\mathcal P(X)$, and
	\begin{equation*}
	\left|P_t\varphi(x)-\int_{X}\varphi d\mu_*\right|\leq Ce^{-\alpha t}\exp\left(\gamma\|x\|_{X}^2\right)\left\|\varphi-\int_{X}\varphi d\mu_*\right\|_\gamma
	\end{equation*}
    for any $\varphi\in\mathcal O_\gamma$, $x\in X$ and $t\geq0$, where
	\begin{equation*}
	\mathcal O_\gamma:=\left\{\varphi\in C^1(X):\|\varphi\|_\gamma<\infty\right\},\quad\|\varphi\|_\gamma:=\sup_{x\in X}\left(e^{-\gamma\|x\|_{X}^2}\left(|\varphi(x)|+\|\nabla\varphi(x)\|_{X}\right)\right).
	\end{equation*}
    \end{theorem}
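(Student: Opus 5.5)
Since this is a general criterion quoted from \cite[Theorems 3.4, 4.5]{HM-08}, the plan is to reproduce the Hairer--Mattingly argument in the form needed here. The backbone is a Wasserstein-type contraction for $P_t^*$ in the metric $\rho_1$ (more precisely in a truncated, rescaled version $\tilde\rho=1\wedge(\rho_1/\delta)$ with a small $\delta>0$), obtained in three pieces:
\begin{itemize}
\item contraction for nearby initial data, coming from the gradient bound (2) together with the Lyapunov control (1);
\item contraction for arbitrary data in a bounded set, coming from the irreducibility condition (3);
\item a return of the process into bounded sets, coming again from (1).
\end{itemize}

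\emph{Local contraction.} First I would pass from (2) to a gradient estimate for $P_t\varphi$ with Lipschitz constants measured in $\rho_1$. For a $C^1$ path $\ell$ joining $x$ to $x'$, write $P_t\varphi(x')-P_t\varphi(x)=\int_0^1\langle\nabla P_t\varphi(\ell(r)),\ell'(r)\rangle\,dr$. Bound the integrand by (2) and use (1) with $r\in[1/4,3]$ to control $P_t(\|\nabla\varphi\|^2)$ when $\|\nabla\varphi(y)\|\lesssim e^{\gamma\|y\|^2}$. This gives, for $\delta$ small and $t$ large,
\[
\tilde\rho(P_t^*\delta_x,P_t^*\delta_{x'})\le \tfrac12\,\tilde\rho(x,x')
\quad\text{whenever }\rho_1(x,x')<\delta .
\]
The factor $\xi(1)<1$ in (1) is what makes the exponential weight $e^{\gamma\|x\|^2}$ reproduce itself with a strictly smaller exponent after unit time, so that iteration does not let the weights blow up. This ``$\rho$-Lipschitz semigroup'' step is the standard gluing of (1) and (2) along geodesics for $\rho_s$.

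\emph{Contraction on balls and globally.} Next, for $x,x'\in B_X(0,R)$, condition (3) provides a coupling putting positive mass $\kappa$ on $\{\rho_r(y,y')\le\varepsilon\}$. Combining it with the local contraction on that set shows $\tilde\rho(P_t^*\delta_x,P_t^*\delta_{x'})\le 1-\kappa'$ for suitable $t\ge T$. To globalize, I would use the Lyapunov function $V(x)=e^{\gamma\|x\|^2}$, which by (1) satisfies $P_tV\le C V^{\xi(t)}$. With the weighted distance $\hat\rho(x,x')=\sqrt{\tilde\rho(x,x')\,(1+V(x)+V(x'))}$ and a Harris-type splitting, the drift of $V$ outside $B_X(0,R)$ and the contraction inside yield
\[
\hat\rho(P_{t_0}^*\mu,P_{t_0}^*\mu')\le \tfrac12\,\hat\rho(\mu,\mu')
\]
for a fixed $t_0$. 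Iterating and interpolating over $t\in[0,t_0]$ with (1) gives exponential decay. Finally, $\hat\rho$ is comparable to $\rho_1$ up to constants (using again that $\rho_1$ carries the weight $e^{\gamma\|\cdot\|^2}$), which yields the stated contraction.

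\emph{Conclusion and main obstacle.} Completeness of $(\mathcal P_1,\rho_1)$ makes $P_{nt_0}^*\delta_0$ Cauchy. Its limit $\mu_*$ is invariant and unique by the contraction. The bound for $\varphi\in\mathcal O_\gamma$ follows from duality, since $|\int\varphi\,d\mu-\int\varphi\,d\mu'|\le\|\varphi\|_\gamma\,\rho_1(\mu,\mu')$ by the mean value theorem along paths, applied with $\mu=\delta_x$ and $\mu'=\mu_*$. The delicate step is the gluing in the globalization: one must check that the exponential weights produced by (2) and by the path metric are absorbed by (1) uniformly in $r\in[1/4,3]$. This bookkeeping, with exponents $\gamma\xi(t)$ versus $\gamma$, is where I expect the constants to be tight, and I would follow the choice of $r$ in \cite{HM-08} there.
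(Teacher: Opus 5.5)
The paper gives no proof of this criterion; it imports it from \cite{HM-08}. Your three-part architecture is the right one: local contraction from \eqref{eq HM1}--\eqref{eq HM2}, $\tilde\rho$-smallness of balls from \eqref{eq HM3}, and Lyapunov gluing in $\hat\rho$. Your last step, however, is false: $\hat\rho$ is \emph{not} comparable to $\rho_1$.

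\begin{itemize}
\item At small scales: for $x'\to x$ you have $\tilde\rho(x,x')=\rho_1(x,x')/\delta$, so $\hat\rho(x,x')\approx\sqrt{(1+2V(x))\rho_1(x,x')/\delta}\gg\rho_1(x,x')$. Dirac masses admit only the trivial coupling, so $\hat\rho(\delta_x,\delta_{x'})\le C\rho_1(\delta_x,\delta_{x'})$ fails.
\item At large scales, with your choice $V=e^{\gamma\|\cdot\|_X^2}$, even the other direction fails. Indeed $\rho_1(0,x)=\int_0^{\|x\|}e^{\gamma u^2}du\sim e^{\gamma\|x\|^2}/(2\gamma\|x\|)$, while $\hat\rho(0,x)\sim e^{\gamma\|x\|^2/2}$. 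Getting $\rho_1\le C\hat\rho$ requires $V=e^{r\gamma\|\cdot\|_X^2}$ with $r>2$, which is one reason \eqref{eq HM1} is assumed up to $r=3$.
\end{itemize}

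So the Harris step yields at best the H\"older-type bound $\rho_1(P_t^*\delta_x,P_t^*\delta_{x'})\lesssim e^{-\alpha t}\hat\rho(x,x')$. It does not give the Lipschitz contraction $\rho_1(P_t^*\mu,P_t^*\mu')\le Ce^{-\alpha t}\rho_1(\mu,\mu')$, and your Cauchy argument in $(\mathcal P_1,\rho_1)$ has nothing to run on. Existence and uniqueness of $\mu_*$ do already follow from the $\hat\rho$-gap.

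\emph{The missing upgrade.} You need a second use of \eqref{eq HM2} to pass from the $\hat\rho$-gap to a $\rho_1$-gap.
\begin{itemize}
\item Fix $\varphi$ with $\|\nabla\varphi(y)\|_X\le e^{\gamma\|y\|_X^2}$, and set $L(t)=\sup_y e^{-\gamma\|y\|_X^2}\|\nabla P_t\varphi(y)\|_X$, which is finite by \eqref{eq HM1}.
\item Apply \eqref{eq HM2} over a further time $s$ to $P_t\varphi-\int\varphi\,d\mu_*$. Since $\varphi$ is $\rho_1$-Lipschitz and $\rho_1\lesssim\hat\rho$, the $\hat\rho$-gap gives $|P_t\varphi(w)-\int\varphi\,d\mu_*|\lesssim e^{-\alpha t}(1+V(w))^{1/2}$.
\item For $s$ large, \eqref{eq HM1} absorbs both $e^{\frac\gamma2\|y\|^2}\sqrt{P_sV(y)}$ and $e^{\frac\gamma2\|y\|^2}\sqrt{P_se^{2\gamma\|\cdot\|^2}(y)}$ into $Ce^{\gamma\|y\|^2}$.
\item This gives $L(t+s)\le Ce^{-\alpha t}+Ce^{-\alpha s}L(t)$, hence exponential decay of $L$. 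Integrating along $C^1$ paths and using convexity of the transport cost, this is exactly the $\rho_1$-contraction.
\end{itemize}

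\emph{A smaller point of the same kind.} Condition \eqref{eq HM3} gives closeness in $\rho_r$ with $r<1$. This does not imply $\rho_1(y,y')<\delta$ when $\|y\|_X$ is large, so your local contraction cannot be applied ``on that set'' as stated. Cutting off to a ball instead makes the choice of $\varepsilon$ and of the mass supplied by \eqref{eq HM3} circular. Instead, take $r\in(\tfrac12,1)$ and rerun the path estimate with weight $e^{r\gamma\|\cdot\|^2}$. Choose an extra time $s$ such that iterating \eqref{eq HM1} gives $P_se^{2\gamma\|\cdot\|^2}(y)\le Ce^{2\gamma\xi_s\|y\|^2}$ with $\tfrac12+\xi_s\le r$. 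Then \eqref{eq HM2} yields $|P_s\varphi(y)-P_s\varphi(y')|\le C(1+\delta^{-1})\rho_r(y,y')$ for every $\tilde\rho$-Lipschitz $\varphi$.
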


    We shall apply this criterion to the stochastic flow $U(t,U_0)$ associated with \eqref{eq NS},\eqref{eq xi}, using an equivalent Hilbert norm on the phase space $\mathcal X$. This norm is introduced in the proof of Theorem \ref{thm 2}. It remains to prove the asymptotic strong Feller property and irreducibility, which is done in the next two subsections.

\begin{remark}\label{Rem 1} We mention that the invariant measure of the heat equation \eqref{eq eta} admits an explicit formulation.  Let $\mu_s$ be a Gaussian measure on $L^2(\omega;\R^2)$ with covariance $\Sigma$ given by
	\begin{equation*}
	\mu_s:=\mathcal N(0,\Sigma),\quad\Sigma h:=\sum_{j\in\N^+}\tfrac{b_j^2}{2\lambda_j}\langle h,\psi_j\rangle_{L^2(\omega)}\psi_j.
	\end{equation*}
Then $\mu_s$ is the unique invariant measure of \eqref{eq eta}. Moreover, $\eta^{s}$ satisfies that $\law(\eta^{s}(t))=\mu_s$ for any $t\in\R$. Additionally, setting $\Pi_u$ ae the projection from $\mathcal X$ to $L^2(\omega;\R^2)$ and noting the heat component evolves autonomously, we have $(\Pi_\eta)_\#\mu_*$ is invariant for \eqref{eq eta}. Thus $(\Pi_\eta)_\#\mu_*=\mu_s$, and the invariant measure $\mu_*$ on $\mathcal X$  can be disintegrated as 
	\begin{equation}\label{eq mu_*}
	\mu_*(du,d\eta)=p_\eta(du)\,\mu_s(d\eta),
	\end{equation}
where $p_\eta$ denotes a probability kernel on $\mathcal H^1$.  The measure $\mu$ in Theorem \ref{thm 1} can be described as 
	\begin{equation*}
	\mu=(\Pi_u)_\#\mu_*=\int_{L^2(\omega;\R^2)}p_\eta\,\mu_s(d\eta),
	\end{equation*}
where $\Pi_u$ is the projection from $\mathcal X$ to $\mathcal H^1$. In particular, $\mu$ is the velocity marginal of the unique stationary law of the coupled system.
\end{remark}

    \subsection{Asymptotic strong Feller property}\label{Sec 4.1}

    With Theorem~\ref{thm control4} at hand, we prove the asymptotic strong Feller estimate in Proposition~\ref{prop ASF}. Following the Malliavin-control argument of \cite{HM-06,HM-11b}, we construct a finite-dimensional, generally non-adapted control that asymptotically cancels the tangent process; the stabilization bounds and the Malliavin integration-by-parts formula then yield the desired gradient estimate.

    \begin{proposition}\label{prop ASF}
    For the equation given by \eqref{eq NS},\eqref{eq xi} and for any $B_0,\gamma,\alpha>0$, there exists a constant $N\in\N^+$ such that if the sequence $\{b_j\}_{j\in\N^+}$ satisfies 
	\begin{equation*}
	\sum_{j\in\N^+}b_j^2\leq B_0\quad\text{and}\quad b_j\neq0\quad\text{for }1\leq j\leq N,
	\end{equation*}
    then there exists a constant $C>0$ such that, for any Fr\'echet differentiable function $\varphi\colon \mathcal X\to\R$, $U_0\in \mathcal X$ and $t\geq0$,
	\begin{align}\label{eq ASF}
	\|\nabla P_t\varphi(U_0)\|_{\mathcal X}\leq C\exp\left(\gamma\|U_0\|_{\mathcal X}^2\right)\left(\sqrt{P_t(|\varphi|^2)(U_0)}+e^{-\alpha t}\sqrt{P_t(\|\nabla\varphi\|_{\mathcal X}^2)(U_0)}\right).
	\end{align}
    \end{proposition}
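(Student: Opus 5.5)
We follow the Hairer--Mattingly Malliavin-control scheme. Fix $U_0$, a direction $\xi\in\mathcal X$ with $\|\xi\|_{\mathcal X}=1$, and write $\rho_t=\mathcal J_{0,t}\xi-\mathcal A_{0,t,N}h_{0,t}$ for a control $h\in L^2(\Omega;H)$ still to be built. Then
\begin{equation*}
\langle\nabla P_t\varphi(U_0),\xi\rangle=\E\langle\nabla\varphi(U_t),\mathcal J_{0,t}\xi\rangle=\E\langle\nabla\varphi(U_t),\mathcal D^hU_t\rangle+\E\langle\nabla\varphi(U_t),\rho_t\rangle .
\end{equation*}
By Malliavin integration by parts, the first term equals $\E\bigl(\varphi(U_t)\int_0^t h\,\delta W\bigr)$, where $\int h\,\delta W$ is the Skorokhod integral. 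The second term is at most $\sqrt{P_t\|\nabla\varphi\|^2}\,(\E\|\rho_t\|^2)^{1/2}$. It therefore suffices to show $\E\|\rho_t\|_{\mathcal X}^2\le Ce^{2\gamma\|U_0\|^2}e^{-2\alpha t}$ and $\E|\int_0^t h\,\delta W|^2\le Ce^{2\gamma\|U_0\|^2}$, uniformly in $t$.

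\textbf{Construction of $h$.} We build $h$ interval by interval on $[n,n+1]$ using Theorem~\ref{thm control4}, with $g=u$ restricted to $[n,n+1]$. We alternate control steps and free steps, as in \cite{HM-11b}. On an interval $[2n,2n+1]$ we set
\begin{equation*}
h=-\mathcal A^*_{N,u}(\mathcal G_{N,u}+\beta)^{-1}\mathcal J_{2n,2n+1}\rho_{2n},
\end{equation*}
and on $[2n+1,2n+2]$ we set $h=0$. Then $\rho_{2n+2}=\mathcal J_{2n+1,2n+2}V(2n+1)$, where $V$ solves \eqref{eq V} with data $\rho_{2n}$. By \eqref{eq control4a},
\begin{equation*}
\|\rho_{2n+1}\|\le\mathcal C_{u,n}\bigl(\widehat b_N\lambda_N\beta^{1/2}+\lambda_N^{-1/4}\bigr)\|\rho_{2n}\|,
\end{equation*}
and by \eqref{eq control4b} $\|h\|_{H_{2n,2n+1}}\le\mathcal C_{u,n}(\widehat b_N\lambda_N+\beta^{-1/2}\lambda_N^{-1/4})\|\rho_{2n}\|$. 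The constant $\mathcal C_{u,n}$ depends on $\|u\|_{L^\infty}$ and on $\|\partial_ru\|_{L^2L^4}$ over that interval. The order of choices is this: first choose $N$ large, depending on $B_0,\alpha,\gamma$, so that $\lambda_N^{-1/4}$ is small; then choose $\beta$ small, depending on $\widehat b_N$, so that $\widehat b_N\lambda_N\beta^{1/2}\le\lambda_N^{-1/4}$. With these choices the contraction factor per step is $\kappa_N\mathcal C_{u,n}$ with $\kappa_N\to0$. The step $\mathcal J_{2n+1,2n+2}$ costs a factor $\exp(C\int\|u\|_{\mathcal H^2}\cdots)$, which is controlled by Lemma~\ref{lemma l2}-type bounds.

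\textbf{Moment bounds — the main obstacle.} The difficulty is that $\mathcal C_g$ is a doubly exponential function of $\|u\|_{L^\infty(Q)}$ and $\|\partial_ru\|_{L^2L^4}$. Such quantities have no exponential moments uniformly in $N$, and their size also grows with $B_0$. I would handle this by a stopping or cutoff argument, applying the contraction only when $\mathcal C_{u,n}\le R$. Concretely, on each interval we use the control only if $\mathcal C_{u,n}\le\kappa_N^{-1/2}$ and set $h=0$ otherwise. The one-step expectation of the multiplicative factor is then bounded by
\begin{equation*}
\kappa_N^{1/2}+\Pb(\mathcal C_{u,n}>\kappa_N^{-1/2})^{1/2}(\E\|\mathcal J\|^4)^{1/4}.
\end{equation*}
The tail probability is made small using Lyapunov estimates for $u$ in stronger norms: the $\mathcal H^1$ exponential moments together with parabolic smoothing give $L^\infty$ and $\partial_ru$ bounds, since the forcing is $\eta$, which is regular in time. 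The exponential moment bound \eqref{eq HM1}-type of the form $\E\exp(\gamma\|U_n\|^2)\le C\exp(\gamma\xi\|U_0\|^2)$ lets us iterate via the Markov property and Cauchy--Schwarz. The result is $\E\|\rho_{2n}\|^2\le Ce^{\gamma\|U_0\|^2}e^{-2\alpha n}$, once $N$ is large enough that the per-step factor is below $e^{-2\alpha}$.

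\textbf{Skorokhod integral.} The control $h$ is non-adapted on each interval, since it uses $u$ on the whole interval. We bound its Skorokhod integral by the standard estimate
\begin{equation*}
\E\Bigl|\int_{2n}^{2n+1}h\,\delta W\Bigr|^2\le\E\|h\|^2_{H}+\E\int\!\!\int|\mathcal D_sh(r)|^2\,ds\,dr .
\end{equation*}
The Malliavin derivative of $h$ is computed from the explicit resolvent formula, using
\begin{equation*}
\mathcal D(\mathcal G+\beta)^{-1}=-(\mathcal G+\beta)^{-1}(\mathcal D\mathcal G)(\mathcal G+\beta)^{-1},
\end{equation*}
where $\|(\mathcal G+\beta)^{-1}\|\le\beta^{-1}$. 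The factor $\mathcal D\mathcal G$ involves second variations of the flow, which are polynomially bounded by exponential moments of $u$. Since $\beta$ is fixed, these contributions are bounded by $C(\beta,N)$ times $\|\rho_{2n}\|$-type moments, and they are summable thanks to the geometric decay. Summing over $n$ gives the uniform bound on the Skorokhod integral, and hence \eqref{eq ASF}.
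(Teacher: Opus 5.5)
There is a genuine gap in how you handle the event where $\mathcal C_{u,n}$ is large. You switch the control off there ($h=0$ outside $\{\mathcal C_{u,n}\le\kappa_N^{-1/2}\}$). Indicators of nontrivial events are not Malliavin differentiable, so $h$ is in general not in $\mathbb D^{1,2}(H)$. Consequently neither the bound $\E|\int h\,\delta W|^2\le\E\|h\|_H^2+\E\iint|\mathcal D_sh(r)|^2\,ds\,dr$ nor the integration-by-parts identity is justified. Your resolvent computation of $\mathcal D h$ silently ignores the cutoff. A smooth cutoff would not rescue this cheaply either, since it requires Malliavin derivatives of $\|u\|_{L^\infty}$ and $\|\partial_tu\|_{L^2L^4}$.

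The missing observation is that no cutoff is needed, because the regularized-Gramian control is always safe. With $v_n=\mathcal A_n^*(\mathcal M_n+\beta)^{-1}\check{\mathcal J}_n\hat{\mathcal J}_n\rho_{2n}$ (note the $+$ sign, since $\rho=\mathcal J\xi-\mathcal A h$), one has $\rho_{2n+2}=\beta(\mathcal M_n+\beta)^{-1}\check{\mathcal J}_n\hat{\mathcal J}_n\rho_{2n}$. Since $\|\beta(\mathcal M_n+\beta)^{-1}\|\le1$, this gives $\|\rho_{2n+2}\|\le K_n\|\rho_{2n}\|$ almost surely, whatever the reference trajectory does. The spectral theorem likewise gives $\|v_n\|\le\beta^{-1/2}K_n\|\rho_{2n}\|$; use this rather than \eqref{eq control4b}, whose constant $\mathcal C_g$ has no moments. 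The event $\{Y_n\le M\}$ then enters only the \emph{analysis} of $\E(\|\rho_{2n+2}\|^4|\mathcal F_{2n})$:
\begin{itemize}
\item on the good event, apply Theorem \ref{thm control4};
\item on the bad event, use $\E(K_n^8|\mathcal F_{2n})^{1/2}\Pb(Y_n>M|\mathcal F_{2n})^{1/2}$.
\end{itemize}

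Second, your ordering of the steps fails. Theorem \ref{thm control4} needs $u\in L^\infty$ and $\partial_tu\in L^2L^4$ on the control interval. With velocity data only in $\mathcal H^1$ these are in general infinite near the initial time; for instance $G_0=\infty$ on $[0,1]$ whenever $u_0\notin L^\infty$. Moreover, the conditional tail bounds given $\mathcal F_{2n}$ (Lemma \ref{NS estimate} and Lemma \ref{lemma L bdd}(3),(4)) hold only on the second half $[2n+1,2n+2]$. So the free step must come first to supply parabolic smoothing, and the control must act on $[2n+1,2n+2]$, as in the paper.

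Finally, $\Pb(Y_n>M|\mathcal F_{2n})$ is small only when $\|U_{2n}\|_{\mathcal X}\le R$, so the per-block factor cannot be made uniformly below $e^{-2\alpha}$. What one actually gets is $\E(\|\rho_{2n+2}\|^4|\mathcal F_{2n})\le\varepsilon e^{\gamma\|U_{2n}\|^2}\|\rho_{2n}\|^4$, with large $\|U_{2n}\|$ handled by the rough bound $C_1e^{\gamma\|U_{2n}\|^2/2}$. The exponential weight must then be removed by the product and Cauchy--Schwarz argument together with \eqref{eq L2 Ubdd3}. Your iteration sketch glosses over this.
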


    The overall strategy is inspired by the approach in \cite{HM-06,HM-11b,LXZ-26}.  Namely, it suffices to construct controls $v\in L^2(\Omega;H)$, which are not necessarily adapted, to stabilize the random process 
	\begin{equation}\label{eq rho}
	\rho_t=\mathcal{J}_{0,t}V_0-\mathcal{A}_{0,t}v.
	\end{equation}

    Specifically, we now describe the construction of control $v$. Let $N\in\N^+$ and $\beta>0$ be fixed. For $n\in\N$, define
	\begin{equation*}
	\hat{\mathcal J}_n=\mathcal J_{2n,2n+1},\quad\check{\mathcal J}_n=\mathcal J_{2n+1,2n+2},\quad\mathcal A_n=\mathcal A_{2n+1,2n+2,N},\quad\mathcal M_n=\mathcal A_n\mathcal A_n^*.
	\end{equation*}
    For each $n\in\N$, we define $v_n\in L^2(\Omega;H_{2n+1,2n+2})$ by
	\begin{equation}\label{eq v1}
	v_n=\mathcal A_n^*(\mathcal M_n+\beta)^{-1}\check{\mathcal J}_n\hat{\mathcal J}_n\rho_{2n}.
	\end{equation}
    The global control $v$ is then defined as
	\begin{equation}\label{eq v2}
	v(t)=\begin{cases}
	v_n(t),&t\in(2n+1,2n+2),\;n\in\N,\\
	0,&t\in(2n,2n+1),\;n\in\N.
	\end{cases}
	\end{equation}
    By the definition of $v$, it follows that for any $n\in\N$, 
	\begin{equation*}
	\rho_0=V_0,\quad\rho_{2n+1}=\hat{\mathcal J}_n\rho_{2n},\quad\rho_{2n+2}=\check{\mathcal J}_n\rho_{2n+1}-\mathcal A_nv_n=\beta(\mathcal M_n+\beta)^{-1}\check{\mathcal J}_n\hat{\mathcal J}_n\rho_{2n}.
	\end{equation*}

    \vspace{0.6em}

    Moreover, applying the stabilization result, i.e. Theorem \ref{thm control4}, we derive the following lemma.    
    \begin{lemma}\label{lemma ASF1}
    For the equation given by \eqref{eq NS},\eqref{eq xi} and any $B_0,\gamma,\alpha>0$, there exists $N\in\N^+$ such that if the sequence $\{b_j\}_{j\in\N^+}$ satisfies
	\begin{equation*}
	\sum_{j\in\N^+}b_j^2\leq B_0\quad\text{and}\quad b_j\neq0\quad\text{for }1\leq j\leq N,
	\end{equation*}
    then there exist constants $C,\beta>0$ such that the process $\rho$ defined by \eqref{eq rho} with $v$ given by \eqref{eq v1},\eqref{eq v2}, satisfies that
	\begin{align}
	\E\|\rho_t\|_{\mathcal X}^2\leq Ce^{-\alpha t}\exp\left(\gamma\|U_0\|_{\mathcal X}^2\right)\|V_0\|_{\mathcal X}^2\quad\forall\,U_0,V_0\in \mathcal X,\;t\geq0,\label{eq rho2}\\
	\E\left|\int_0^tv(s)dW(s)\right|^2\leq C\exp\left(\gamma\|U_0\|_{\mathcal X}^2\right)\|V_0\|_{\mathcal X}^2\quad\forall\,U_0,V_0\in \mathcal X,\;t\geq0.\label{eq v3}
	\end{align}
    \end{lemma}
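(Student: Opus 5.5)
The plan is to apply Theorem~\ref{thm control4} on each control interval $[2n+1,2n+2]$, with the reference trajectory $g=u$ restricted to that interval. This gives a one-step contraction for $\rho$. Concretely, we have $\rho_{2n+2}=\beta(\mathcal M_n+\beta)^{-1}\check{\mathcal J}_n\rho_{2n+1}$. This is exactly the terminal state $V(1)$ in Theorem~\ref{thm control4} with initial datum $V_0=\rho_{2n+1}$, once time is shifted by $2n+1$. The operator $\mathcal A_n$ matches $\mathcal A_{N,g}$, and $v_n$ is minus the control $h$ there (the sign convention in \eqref{eq rho} accounts for this). Therefore
\begin{equation*}
\|\rho_{2n+2}\|_{\mathcal X}\leq\mathcal C_{u,n}\big(\widehat b_N\lambda_N\beta^{1/2}+\lambda_N^{-1/4}\big)\|\hat{\mathcal J}_n\rho_{2n}\|_{\mathcal X},
\end{equation*}
\begin{equation*}
\|v_n\|_{H_{2n+1,2n+2}}\leq\mathcal C_{u,n}\big(\widehat b_N\lambda_N+\beta^{-1/2}\lambda_N^{-1/4}\big)\|\hat{\mathcal J}_n\rho_{2n}\|_{\mathcal X}.
\end{equation*}
Here $\mathcal C_{u,n}$ is $\mathcal C_g$ evaluated with $G_0=\|u\|_{L^\infty}$ and $G_1=\|\partial_tu\|_{L^2L^4}$ on $[2n+1,2n+2]$. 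On the free interval we use the bound $\|\hat{\mathcal J}_n\|\leq\exp(C\int\|u\|_{\mathcal H^2}^2)$, which comes from standard energy estimates for the linearized system. The parameter choice is as follows. First choose $N$ large, so that $\lambda_N^{-1/4}$ is tiny. This is where $N=N(B_0,\gamma,\alpha)$ enters. Then choose $\beta$ small (depending on $\widehat b_N$), so that the first term is as small as the second.

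The key issue is that $\mathcal C_{u,n}$ is doubly exponential in $\|u\|_{L^\infty}^2$ and $\|\partial_tu\|^{20/3}$, so its moments are not directly finite. I would handle this by a cutoff/truncation argument in the spirit of \cite{LXZ-26}. Let $\Xi_n=\mathcal C_{u,n}\exp(C\int_{2n}^{2n+1}\|u\|_{\mathcal H^2}^2)$. We use two estimates. The first is the crude bound $\|\rho_{2n+2}\|\leq\|\check{\mathcal J}_n\hat{\mathcal J}_n\rho_{2n}\|$, which holds since $\beta(\mathcal M_n+\beta)^{-1}$ has norm at most $1$; it applies always. The second is the smallness factor $\kappa_N:=C(\widehat b_N\lambda_N\beta^{1/2}+\lambda_N^{-1/4})$, which gives $\|\rho_{2n+2}\|\leq \kappa_N\Xi_n\|\rho_{2n}\|$.

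Taking the minimum of the two, we get $\|\rho_{2n+2}\|\leq\min(\kappa_N\Xi_n,\Theta_n)\|\rho_{2n}\|$, where $\Theta_n$ is a linear-growth Gronwall factor. It then suffices to show $\E[\min(\kappa_N\Xi_n,\Theta_n)^{2}\mid\mathcal F_{2n}]\leq e^{-2\alpha}\exp(\tfrac{\gamma}{4}\|U_{2n}\|^2_{\mathcal X})$ for $N$ large. This reduces to two tail estimates. The first is that the probability of $\|u\|_{L^\infty_tH^2}$ and $\|\partial_tu\|_{L^2L^4}$ exceeding a level $R$ is small. The second is an exponential moment bound for $\Theta_n$. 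I would obtain both from exponential-moment estimates for the Navier--Stokes--heat system. Those estimates use the parabolic smoothing of $u$ over the unit interval and the Gaussian nature of $\eta$ (which is smooth in $\omega$ after positive time). They require $\gamma$ small relative to $B_0$, which is fine since we may shrink $\gamma$.

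The conclusion then follows by iterating via the Markov property. We combine this with the Lyapunov-type bound $\E\exp(\gamma\|U_{t}\|^2)\leq C\exp(\gamma e^{-ct}\|U_0\|^2)$ to handle the products of the weights $\exp(\tfrac{\gamma}{4}\|U_{2n}\|^2)$ across steps, for instance by Hölder's inequality with exponents that shrink geometrically. This yields \eqref{eq rho2} at even times, and interpolation over the free intervals extends it to all $t$.

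For \eqref{eq v3}, $v$ is not adapted, so the integral is a Skorokhod integral, and we apply the isometry
\begin{equation*}
\E|\textstyle\int v\,dW|^2\leq\E\|v\|_H^2+\E\|\mathcal Dv\|_{H\otimes H}^2.
\end{equation*}
The term $\E\|v\|_H^2$ is summable, using the bound on $v_n$ above together with the geometric decay of $\rho_{2n}$. The main obstacle is $\mathcal Dv_n$. Differentiating \eqref{eq v1} produces terms $\mathcal D\mathcal A_n^*$, $\mathcal D\mathcal M_n$ (through the resolvent identity $\mathcal D(\mathcal M_n+\beta)^{-1}=-(\mathcal M_n+\beta)^{-1}\mathcal D\mathcal M_n(\mathcal M_n+\beta)^{-1}$), $\mathcal D\check{\mathcal J}_n$ and $\mathcal D\hat{\mathcal J}_n$, and also $\mathcal D\rho_{2n}$, which brings in earlier steps recursively. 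I would bound the resolvent factors using $\|\mathcal A_n^*(\mathcal M_n+\beta)^{-1}\|\leq\beta^{-1/2}$ and $\|(\mathcal M_n+\beta)^{-1}\|\leq\beta^{-1}$. Second-variation estimates for the linearized flow give $\|\mathcal D\mathcal J\|\lesssim\exp(C\int\|u\|_{\mathcal H^2}^2)$. With $\beta$ fixed these produce polynomial factors in $\beta^{-1}$, and these are absorbed by the exponential decay of $\E\|\rho_{2n}\|^2$, which already holds with any prescribed rate $\alpha$. Handling the recursive structure of $\mathcal D\rho_{2n}$ with the same truncated moment control is the step I expect to be hardest.
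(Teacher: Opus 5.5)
The genuine gap is the Malliavin term in \eqref{eq v3}. You bound the Skorokhod correction by $\E\|\mathcal Dv\|_{H\otimes H}^2$. That forces you to control $\mathcal D_sv_n$ for every $s<2n+2$, and hence $\mathcal D_s\hat{\mathcal J}_n$ and $\mathcal D_s\rho_{2n}$ for $s$ in all earlier blocks, summably in $n$. You leave this recursion open. Closing it would require a separate argument that propagates the uncontracted tangent vectors $\mathcal J_{s,2k}(0,b_j\psi_j)$ and the second variations through arbitrarily many blocks, while the contraction of $\rho$ holds only in conditional mean, not pathwise.

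The missing observation is that this term never arises if you keep the exact identity
\[
\E\Bigl|\int_0^tv(s)dW(s)\Bigr|^2=\E\|v\|_{H_{0,t}}^2+\E\int_0^t\int_0^t\operatorname{Tr}\bigl(\mathcal D_sv(r)\mathcal D_rv(s)\bigr)dsdr .
\]
\begin{itemize}
\item $v$ vanishes on the free blocks, and $v_m$ is $\mathcal F_{2m+2}$-measurable. So $\mathcal D_rv(s)=0$ whenever $s\in(2m+1,2m+2)$ and $r>2m+2$. Only pairs $(s,r)$ lying in the same control block contribute, and Cauchy--Schwarz is applied only there.
\item For $s\in(2n+1,2n+2)$, the vector $\rho_{2n+1}=\hat{\mathcal J}_n\rho_{2n}$ is $\mathcal F_{2n+1}$-measurable, so $\mathcal D_s^j\rho_{2n+1}=0$. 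Hence neither $\mathcal D\hat{\mathcal J}_n$ nor $\mathcal D\rho_{2n}$ ever appears.
\item Only $\mathcal D_s^j\check{\mathcal J}_n$, $\mathcal D_s^j\mathcal A_n$ and $\mathcal D_s^j\mathcal A_n^*$ remain. Each is bounded pathwise through $\mathcal J^{(2)}$ by $C_\theta|b_j|Q_n(\theta)^2$.
\item This gives $\|\mathcal D_s^jv_n\|_H\leq C_{\beta,\theta}|b_j|Q_n(\theta)^5\|\rho_{2n}\|_{\mathcal X}$. The bound is summable in $j$ since $\sum_jb_j^2\leq B_0$, and summable in $n$ by \eqref{eq L2H2 bdd} together with \eqref{eq ASF11}.
\end{itemize}

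Your route to \eqref{eq rho2} matches the paper's: Theorem~\ref{thm control4} on the event where $\|u\|_{L^\infty}+\|\partial_tu\|_{L^2L^4}$ is bounded on the control block, the bound $\|\beta(\mathcal M_n+\beta)^{-1}\|_{\mathcal L(\mathcal X)}\leq1$ off it, and the weight $\exp(\gamma\|U_{2n}\|_{\mathcal X}^2)$ absorbing large data. Three points still need fixing.

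\begin{itemize}
\item \textbf{Iteration.} With only the second-moment contraction you state, you cannot decouple the weights from $\|\rho_{2n}\|_{\mathcal X}$ across steps. The paper proves the one-block bound for fourth conditional moments (Lemma~\ref{lemma ASF2}). It then uses Cauchy--Schwarz against $\exp(\bar\gamma\sum_k\|U_{2k}\|_{\mathcal X}^2)$, which is controlled by \eqref{eq L2 Ubdd3}.
\item \textbf{Weighted decay.} That iteration produces the weighted decay \eqref{eq ASF11}. This, and not the unweighted decay of $\E\|\rho_{2n}\|_{\mathcal X}^2$ you invoke, is what \eqref{eq v3} needs. The factors $K_n$ and $Q_n(\theta)$ are controlled only conditionally on $\mathcal F_{2n}$, and only by $e^{\kappa\|U_{2n}\|_{\mathcal X}^2}$.
\item \textbf{Bound on $v_n$.} For $\E\|v\|_H^2$ you cannot use the Theorem~\ref{thm control4} bound on $v_n$. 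The constant $\mathcal C_{u,n}$ is triple-exponential in $\|u\|_{L^\infty}^2$ and double-exponential in $\|\partial_tu\|_{L^2L^4}^{20/3}$. By contrast, \eqref{eq L-inf bdd} and \eqref{eq L bdd} give only Gaussian and polynomial tails respectively, so the moments of $\mathcal C_{u,n}$ are not available. Use the resolvent bound $\|v_n\|_H\leq\beta^{-1/2}K_n\|\rho_{2n}\|_{\mathcal X}$ instead.
\end{itemize}
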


   Invoking Lemma \ref{lemma ASF1}, we establish Proposition~\ref{prop ASF} as follows.
   
    \begin{proof}[Proof of Proposition \ref{prop ASF}]
    Fix $V_0\in \mathcal X$ with $\|V_0\|_{\mathcal X}\leq1$, and let $v$ and $\rho$ be given by the preceding construction, with the parameters chosen in Lemma \ref{lemma ASF1}. By \eqref{eq D def} and \eqref{eq rho},
	\begin{align*}
	\langle\nabla P_t\varphi(U_0),V_0\rangle_{\mathcal X}&=\E\left\langle\nabla\varphi(U_t),\mathcal J_{0,t}V_0\right\rangle_{\mathcal X}\\
	&=\E\left\langle\nabla\varphi(U_t),\mathcal A_{0,t}v\right\rangle_{\mathcal X}+\E\left\langle\nabla\varphi(U_t),\rho_t\right\rangle_{\mathcal X}\\
	&=\E\left\langle\mathcal D\varphi(U_t),v\right\rangle_{H}+\E\left\langle\nabla\varphi(U_t),\rho_t\right\rangle_{\mathcal X}.
	\end{align*}
    Note that the Malliavin integration-by-parts formula implies that
	\begin{equation*}
	\E\left\langle\mathcal D\varphi(U_t),v\right\rangle_{H}=\E\left(\varphi(U_t)\int_0^tv(s)dW(s)\right).
	\end{equation*}
    Therefore, combining \eqref{eq rho2}, and \eqref{eq v3}, we derive
	\begin{align*}
	|\langle\nabla P_t\varphi(U_0),V_0\rangle_{\mathcal X}|\leq C\exp\left(\gamma\|U_0\|_{\mathcal X}^2\right)\sqrt{P_t(|\varphi|^2)(U_0)}+Ce^{-\alpha t}\exp\left(\gamma\|U_0\|_{\mathcal X}^2\right)\sqrt{P_t(\|\nabla\varphi\|_{\mathcal X}^2)(U_0)}.
	\end{align*}
    
    Consequently, by taking the supremum over all $V_0\in \mathcal X$ with $\|V_0\|_{\mathcal X}\leq1$, this proves \eqref{eq ASF}, thereby completing the proof of Proposition \ref{prop ASF}.
    \end{proof}

    \vspace{0.3em}

     It remains to prove Lemma~\ref{lemma ASF1}. The key input is the following conditional one-block contraction. Once this estimate is available, its iteration and the corresponding bounds on the divergence cost follow by standard arguments; the details are given in Appendix~\ref{Sec D}.

    \begin{lemma}\label{lemma ASF2}
    For the equation given by \eqref{eq NS},\eqref{eq xi} and any $B_0,\gamma,\varepsilon>0$, there exists $N\in\N^+$ such that if the sequence $\{b_j\}_{j\in\N^+}$ satisfies
	\begin{equation*}
	\sum_{j\in\N^+}b_j^2\leq B_0\quad\text{and}\quad b_j\neq0\quad\text{for }1\leq j\leq N,
	\end{equation*}
    then there exists a constant $\beta>0$ such that the process $\rho$ defined by \eqref{eq rho} with $v$ given by \eqref{eq v1},\eqref{eq v2}, satisfies that
	\begin{equation}\label{eq ASF step}
	\E\left(\|\rho_{2n+2}\|_{\mathcal X}^4|\mathcal F_{2n}\right)\leq\varepsilon\exp\left(\gamma\|U_{2n}\|_{\mathcal X}^2\right)\|\rho_{2n}\|_{\mathcal X}^4\quad\forall\,U_0,V_0\in \mathcal X,\;n\in\N.
	\end{equation}
    \end{lemma}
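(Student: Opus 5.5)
The plan is to apply Theorem~\ref{thm control4} on the block $[2n,2n+2]$, conditionally on $\mathcal F_{2n}$, and then control the random constant $\mathcal C_g$ by exponential moment bounds for the Navier--Stokes trajectory.

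\textbf{Reduction to one block.} By the recursion after \eqref{eq v2},
\[
\rho_{2n+2}=\beta(\mathcal M_n+\beta)^{-1}\check{\mathcal J}_n\hat{\mathcal J}_n\rho_{2n}.
\]
Set $V'=\hat{\mathcal J}_n\rho_{2n}$. By the Markov property we may take $n=0$, with $U_{2n}$ as initial datum and $\rho_{2n}$ as a deterministic ($\mathcal F_{2n}$-measurable) direction. On $[1,2]$, after a time shift, $\check{\mathcal J}_n$ is $\mathcal J_g$ with $g=u$ restricted to $[2n+1,2n+2]$. Also $\mathcal A_n=\mathcal A_{N,g}$ and $\mathcal M_n=\mathcal G_{N,g}$. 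From \eqref{eq FR6}, $\beta(\mathcal G_{N,g}+\beta)^{-1}\mathcal J_gV'$ is exactly the terminal state $V(1)$ of \eqref{eq V} with the control of Theorem~\ref{thm control4}. Hence, by \eqref{eq control4a},
\[
\|\rho_{2n+2}\|_{\mathcal X}\le \mathcal C_g\bigl(\widehat b_N\lambda_N\beta^{1/2}+\lambda_N^{-1/4}\bigr)\|\hat{\mathcal J}_n\rho_{2n}\|_{\mathcal X}.
\]

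\textbf{Moment estimates.} It then suffices to show that
\[
\E\bigl(\mathcal C_g^4\|\hat{\mathcal J}_n\|^4\,\big|\,\mathcal F_{2n}\bigr)\le K_\gamma\exp\bigl(\gamma\|U_{2n}\|_{\mathcal X}^2\bigr),
\]
with $K_\gamma$ independent of $N$ and $\beta$. The main obstacle is that $\mathcal C_g$ is doubly exponential in $G_0=\|u\|_{L^\infty}$ and $G_1=\|\partial_r u\|_{L^2L^4}$, so Gaussian-type moments are insufficient. I would first try to redo the bounds so that the dependence of $\mathcal C_g$ on $g$ only enters through quantities with all exponential moments. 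The velocity $u$ is forced by $\eta$, which is smooth on $[2n+1,2n+2]$ because the heat semigroup regularizes. Parabolic smoothing of $u$ then bounds $\|u\|_{L^\infty(Q)}$ and $\|\partial_ru\|_{L^2L^4}$ on $[2n+1,2n+2]$ polynomially by $\|U_{2n}\|_{\mathcal X}$ and by norms of the Ornstein--Uhlenbeck process $\eta$.

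The deterministic part coming from $\|U_{2n}\|$ is the difficulty, since $\exp\exp(C\|U_{2n}\|^2)$ is not $\le \varepsilon e^{\gamma\|U_{2n}\|^2}$. To handle it I would split on the event $\{\|U_{2n}\|_{\mathcal X}\le R\}$. There, the bound becomes $C_R\,(\widehat b_N\lambda_N\beta^{1/2}+\lambda_N^{-1/4})^4$ times stochastic moments, which are finite thanks to Gaussian tails of $\eta$ together with the enstrophy/Lyapunov bounds. On the complement, I would use the cruder but linear-exponential growth $\|\rho_{2n+2}\|\le\|\hat{\mathcal J}_n\check{\mathcal J}_n\rho_{2n}\|$, since $\beta(\mathcal M+\beta)^{-1}$ has norm $\le1$. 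Combined with $\E\|\mathcal J\|^4\le C e^{\gamma'\|U\|^2}$ (standard for 2D Navier--Stokes via the Lyapunov estimate \eqref{eq HM1}-type bounds), this gives $\le Ce^{(\gamma/2)\|U_{2n}\|^2}\le \varepsilon e^{\gamma\|U_{2n}\|^2}$ once $R$ is large.

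\textbf{Choice of parameters.} I would fix $R$ first. Then choose $N$ so that $C_R\lambda_N^{-1}\le\varepsilon/2$; this uses only $B_0$ through the Lyapunov constants. Finally choose $\beta$ so small that $C_R(\widehat b_N\lambda_N)^4\beta^2\le\varepsilon/2$, using $b_j\ne0$ for $j\le N$ so that $\widehat b_N<\infty$. Summing the two regions yields \eqref{eq ASF step}.
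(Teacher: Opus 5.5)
Your reduction to one block is correct. Via \eqref{eq FR6}, $\rho_{2n+2}=\beta(\mathcal M_n+\beta)^{-1}\check{\mathcal J}_n\hat{\mathcal J}_n\rho_{2n}$ is the terminal state of Theorem~\ref{thm control4} with $g=u|_{[2n+1,2n+2]}$. Your treatment of $\{\|U_{2n}\|_{\mathcal X}>R\}$ also matches the paper's Case a: the crude bound $\|\beta(\mathcal M_n+\beta)^{-1}\|\le 1$, together with fourth moments of the Jacobian with arbitrarily small exponential weight (from Lemma~\ref{lemma J1} and \eqref{eq L2H2 bdd}).

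The gap is on $\{\|U_{2n}\|_{\mathcal X}\le R\}$. There you need $\E(\mathcal C_g^4\|\hat{\mathcal J}_n\|^4\mid\mathcal F_{2n})\le C_R$, and you assert these moments are finite ``thanks to Gaussian tails of $\eta$''. But conditioning on $\|U_{2n}\|_{\mathcal X}\le R$ does not make $g$ bounded. The quantities $G_0=\|u\|_{L^\infty}$ and $G_1=\|\partial_t u\|_{L^2L^4}$ on $[2n+1,2n+2]$ remain random through the noise on $[2n,2n+2]$. You have at best Gaussian tails for $G_0$ (\eqref{eq L-inf bdd}) and only polynomial moment bounds for $G_1$ (\eqref{eq L bdd}), whereas $\mathcal C_g=C\exp(\exp(CG_1^{20/3}+\exp(CG_0^2)))$. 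Your own remark that Gaussian-type moments are insufficient applies verbatim here; the obstruction is not caused only by the deterministic part $\|U_{2n}\|$. Your proposed remedy, re-deriving the Carleman bounds so that $\mathcal C_g$ has finite moments, is not carried out and is not plausible, since already $\lambda_g\sim\exp(\widehat\lambda G_0^2)$ and $s_g=\widehat s e^{8\lambda_g K}$.

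The missing idea is a second truncation, on the size of the reference trajectory rather than of the initial datum. Set $Y_n=G_0+G_1$ on the block and $\Omega_{n,M}=\{Y_n\le M\}$.

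On $\Omega_{n,M}$ one has $\mathcal C_g\le\mathcal C_M$ deterministically. Theorem~\ref{thm control4} then gives $\|\rho_{2n+2}\|_{\mathcal X}\le\mathcal C_M(\widehat b_N\lambda_N\beta^{1/2}+\lambda_N^{-1/4})K_n\|\rho_{2n}\|_{\mathcal X}$ there, with $K_n=\sup_{2n\le r\le t\le 2n+2}\|\mathcal J_{r,t}\|$.

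On $\Omega_{n,M}^c$ the control estimate is never used. One keeps $\|\rho_{2n+2}\|_{\mathcal X}\le K_n\|\rho_{2n}\|_{\mathcal X}$ and applies Cauchy--Schwarz: $\E(K_n^4\mathbf 1_{\Omega_{n,M}^c}\mid\mathcal F_{2n})\le\E(K_n^8\mid\mathcal F_{2n})^{1/2}\Pb(\Omega_{n,M}^c\mid\mathcal F_{2n})^{1/2}$. The conditional tail is small uniformly on $\{\|U_{2n}\|_{\mathcal X}\le R\}$, by Chebyshev with \eqref{eq L-inf bdd} and \eqref{eq L bdd}.

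Thus only $K_n$ needs moments, and $\mathcal C_g$ only needs to be finite on an event of large probability. The parameters must therefore be fixed in the order $R$, then $M$ (so the tail term is $\le\varepsilon/2$), then $N$ depending on $\mathcal C_M$, then $\beta$. Your scheme has no $M$, and it chooses $N$ and $\beta$ against a constant $C_R$ that you have no way to bound.
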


    \begin{proof}
    Let $\gamma,\varepsilon>0$ be fixed.
    Throughout this proof, $\gamma_0$ denotes the constant in Lemma \ref{lemma L bdd}.

    \vspace{0.3em}

    \noindent{\it Step 1.}
    We first derive two estimates that are uniform with respect to the block index $n$. Set
	\begin{align}
	Y_n=\|u\|_{L^\infty((2n+1,2n+2)\times D)}+\|\partial_tu\|_{L^2((2n+1,2n+2);L^4(D))},\quad K_n=\sup_{2n\leq r\leq t\leq2n+2}\|\mathcal J_{r,t}\|_{\mathcal L(\mathcal X)}.\label{eq ASF Kn}
	\end{align}
    Let us first establish the estimates used below. For every $m\geq1$ and every $\kappa>0$, there exists a constant $C=C(m,\kappa,B_0)>0$ such that
	\begin{equation}\label{eq ASF K2}
	\E\left(K_n^m|\mathcal F_{2n}\right)\leq C\exp\left(\kappa\|U_{2n}\|_{\mathcal X}^2\right),\quad n\in\N.
	\end{equation}
    Indeed, by \eqref{eq ASF Kn} and Lemma \ref{lemma J1}, for every $\theta>0$,
	\begin{equation*}
	K_n\leq C_\theta\exp\left(\theta\int_{2n}^{2n+2}\|u_s\|_{\mathcal H^2}^2ds\right).
	\end{equation*}
   Then for any  $\theta\leq\gamma_0/m$,  \eqref{eq L2H2 bdd} with initial time $2n$ gives
	\begin{align*}
	\E\left(K_n^m|\mathcal F_{2n}\right)\leq C_\theta\E\left(\exp\left(m\theta\int_{2n}^{2n+2}\|u_s\|_{\mathcal H^2}^2ds\right)|\mathcal F_{2n}\right)\leq C_\theta\exp\left(cm\theta\|U_{2n}\|_{\mathcal X}^2\right).
	\end{align*}
    Choosing $\theta$ so small that $cm\theta\leq\kappa$ proves \eqref{eq ASF K2}.

    \vspace{0.3em}
    We shall also use the following local tail estimate: for every $R>0$ and $M>0$,
	\begin{equation*}
	\Pb\left(Y_n>M|\mathcal F_{2n}\right)\leq\tau_R(M)\quad\text{on }\{\|U_{2n}\|_{\mathcal X}\leq R\},\quad n\in\N.
	\end{equation*}
    Here $\tau_R$ is the deterministic function
	\begin{equation*}
	\tau_R(M):=C\exp\left(c\gamma_0R^2-\gamma_0M^2/8\right)+C(1+R^2)M^{-1},\quad M>0.
	\end{equation*}
    To prove the estimate, decompose $Y_n=Y_n^{(1)}+Y_n^{(2)}$ as
	\begin{equation*}
	Y_n^{(1)}=\|u\|_{L^\infty((2n+1,2n+2)\times D)},\quad Y_n^{(2)}=\|\partial_tu\|_{L^2((2n+1,2n+2);L^4(D))}.
	\end{equation*}
    Then applying \eqref{eq L-inf bdd} and  \eqref{eq L bdd} with $m=1$, we obtain
	\begin{align*}
	\Pb\left(Y_n^{(1)}>M/2|\mathcal F_{2n}\right)&\leq C\exp\left(c\gamma_0\|U_{2n}\|_{\mathcal X}^2-\gamma_0M^2/8\right),\\
	\Pb\left(Y_n^{(2)}>M/2|\mathcal F_{2n}\right)&\leq C(1+\|U_{2n}\|_{\mathcal X}^2)M^{-1}.
	\end{align*}
    The union bound gives the displayed estimate for $\tau_R(M)$, and clearly $\tau_R(M)\to0$ as $M\to\infty$.

    \vspace{0.6em}

    \noindent{\it Step 2.}
    Let us set
	\begin{equation*}
	\Omega_{n,M}:=\{Y_n\leq M\}.
	\end{equation*}
    On $\Omega_{n,M}$, the translated trajectory satisfies the assumptions of Theorem~\ref{thm control4} with a deterministic control cost bounded by
	\begin{equation*}
	G_0=\|u\|_{L^\infty((2n+1,2n+2)\times D)},\quad G_1=\|\partial_tu\|_{L^2(2n+1,2n+2;L^4(D))}.
	\end{equation*}
    Thus invoking \ref{thm control4} and setting $\mathcal C_M=C\exp(\exp(CM^{20/3}+\exp(CM^2)))$, on the event $\Omega_{n,M}$, we derive that
	\begin{equation*}
	\|\rho_{2n+2}\|_{\mathcal X}\leq\mathcal C_M\left(\widehat b_N\lambda_N\beta^{1/2}+\lambda_N^{-1/4}\right)\|\rho_{2n+1}\|_{\mathcal X}\leq\mathcal C_M\left(\widehat b_N\lambda_N\beta^{1/2}+\lambda_N^{-1/4}\right)K_n\|\rho_{2n}\|_{\mathcal X}
	\end{equation*}
    on $\Omega_{n,M}$. On the other hand, one has
	\begin{equation*}
	\|\rho_{2n+2}\|_{\mathcal X}=\|\beta(\mathcal M_n+\beta)^{-1}\check{\mathcal J}_n\hat{\mathcal J}_n\rho_{2n}\|_{\mathcal X}\leq K_n\|\rho_{2n}\|_{\mathcal X}\quad\text{a.s.},
	\end{equation*}
    where the latter bound follows from $\|\beta(\mathcal M_n+\beta)^{-1}\|_{\mathcal L(\mathcal X)}\leq1$.
    \vspace{0.6em}

    \noindent{\it Step 3.}    
    Taking $m=4$ and $\kappa=\gamma/2$ in \eqref{eq ASF K2}, there exists a constant $C_1>0$ such that
	\begin{equation*}
	\E\left(K_n^4|\mathcal F_{2n}\right)\leq C_1\exp\left(\tfrac{\gamma}{2}\|U_{2n}\|_{\mathcal X}^2\right).
	\end{equation*}
    With $\gamma,\varepsilon,C_1>0$ fixed, we now take $R>0$ such that $C_1\leq\varepsilon\exp\left(\tfrac{\gamma}{2}R^2\right)$.  Since $\rho_{2n}$ is $\mathcal F_{2n}$-measurable, the rough estimate above gives
	\begin{align*}
	\E\left(\|\rho_{2n+2}\|_{\mathcal X}^4|\mathcal F_{2n}\right)\leq\E\left(K_n^4|\mathcal F_{2n}\right)\|\rho_{2n}\|_{\mathcal X}^4\leq C_1\exp\left(\tfrac{\gamma}{2}\|U_{2n}\|_{\mathcal X}^2\right)\|\rho_{2n}\|_{\mathcal X}^4.
	\end{align*}
    
    \noindent{\it Case a: $\|U_{2n}\|_{\mathcal X}>R$.}
    In this case, the definition of $R$ makes the last coefficient bounded by $\varepsilon\exp(\gamma\|U_{2n}\|_{\mathcal X}^2)$, so \eqref{eq ASF step} holds.

    \vspace{0.3em}

    \noindent{\it Case b: $\|U_{2n}\|_{\mathcal X}\leq R$.}
    Taking $m=4,8$ and $\kappa=1$ in \eqref{eq ASF K2}, and absorbing $\exp(\|U_{2n}\|_{\mathcal X}^2)\leq e^{R^2}$ into the constant, we derive that
	\begin{equation*}
	\E\left(K_n^4|\mathcal F_{2n}\right)+\E\left(K_n^8|\mathcal F_{2n}\right)^{1/2}\leq C_R.
	\end{equation*}
    Let $M>0$ be arbitrary for the moment. Using the two estimates from Step 2 and Cauchy's inequality on $\Omega_{n,M}^c$, we get
	\begin{align*}
	\E\left(\|\rho_{2n+2}\|_{\mathcal X}^4|\mathcal F_{2n}\right)&=\E\left(\|\rho_{2n+2}\|_{\mathcal X}^4\mathbf1_{\Omega_{n,M}}|\mathcal F_{2n}\right)+\E\left(\|\rho_{2n+2}\|_{\mathcal X}^4\mathbf1_{\Omega_{n,M}^c}|\mathcal F_{2n}\right)\\
	&\leq\mathcal C_M^4\left(\widehat b_N\lambda_N\beta^{1/2}+\lambda_N^{-1/4}\right)^4\E\left(K_n^4|\mathcal F_{2n}\right)\|\rho_{2n}\|_{\mathcal X}^4+\E\left(K_n^4\mathbf1_{\Omega_{n,M}^c}|\mathcal F_{2n}\right)\|\rho_{2n}\|_{\mathcal X}^4\\
	&\leq C_R\mathcal C_M^4\left(\widehat b_N\lambda_N\beta^{1/2}+\lambda_N^{-1/4}\right)^4\|\rho_{2n}\|_{\mathcal X}^4+\E\left(K_n^8|\mathcal F_{2n}\right)^{\tfrac{1}{2}}\Pb\left(\Omega_{n,M}^c|\mathcal F_{2n}\right)^{\tfrac{1}{2}}\|\rho_{2n}\|_{\mathcal X}^4\\
	&\leq C_R\left[\mathcal C_M^4\left(\widehat b_N\lambda_N\beta^{1/2}+\lambda_N^{-1/4}\right)^4+\tau_R(M)^{1/2}\right]\|\rho_{2n}\|_{\mathcal X}^4.
	\end{align*}
    
    We then first take the parameter $M$ so large that
	\begin{equation*}
	C_R\tau_R(M)^{1/2}\leq\varepsilon/2.
	\end{equation*}
    With this $M$ fixed, we then take $N$ sufficiently large to derive
    \begin{equation*}
        \lambda_N^{-1/4}\leq(2\mathcal C_MC_R^{1/4})^{-1}(\varepsilon/2)^{1/4}.
    \end{equation*}
    Finally, after $M,N$ are fixed, we choose $\beta>0$ so that
	\begin{align*}
	\widehat b_N\lambda_N\beta^{1/2}\leq(2\mathcal C_MC_R^{1/4})^{-1}(\varepsilon/2)^{1/4}.
	\end{align*} 
    
    Therefore the preceding estimate implies that, for $\|U_{2n}\|_{\mathcal X}\leq R$, it follows that 
	\begin{equation*}
	\E\left(\|\rho_{2n+2}\|_{\mathcal X}^4|\mathcal F_{2n}\right)\leq\varepsilon\|\rho_{2n}\|_{\mathcal X}^4\leq\varepsilon\exp\left(\gamma\|U_{2n}\|_{\mathcal X}^2\right)\|\rho_{2n}\|_{\mathcal X}^4.
	\end{equation*}

    Consequently, collecting these two cases, we derive inequality \eqref{eq ASF step}, which thereby completing the proof of Lemma \ref{lemma ASF2}. 
    \end{proof}

    \vspace{0.6em}

    \subsection{Irreducibility}
    This subsection establishes the accessibility estimate in Proposition~\ref{prop irreducibility}, which provides the irreducibility input for Theorem~\ref{thm HM}. The proof combines the global stability of the unforced system with the usual support lemmas.

    \begin{proposition}\label{prop irreducibility}
    For the equation given by \eqref{eq NS},\eqref{eq eta}, assume that the sequence $\{b_j\}_{j\in\N^+}$ satisfies $\sum_{j\in\N^+}b_j^2<\infty$. Then for any $\varepsilon,R>0$, there exists $T=T(\varepsilon,R)>0$ such that for any $t\geq T$, the solution $U$ of \eqref{eq NS2} satisfies that
	\begin{equation}\label{eq irreducibility}
	\inf_{U_0\in B_{\mathcal X}(0,R)}\Pb\left(\|U(t,U_0)\|_{\mathcal X}\leq\varepsilon\right)>0.
	\end{equation}
    \end{proposition}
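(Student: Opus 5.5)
We argue by comparison with the unforced dynamics and a support argument for the noise. The plan has three steps: (i) deterministic decay of the unforced coupled system, (ii) positive probability that the stochastic convolution stays small, (iii) continuous dependence of the Navier--Stokes component on the forcing, uniformly on bounded sets of initial data.

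\emph{Step 1 (deterministic decay).} Consider the noiseless system, i.e.\ \eqref{eq NS2} with $b_j\equiv0$. The heat component $\eta(t)=e^{t\Delta_\omega}\eta_0$ satisfies $\|\eta(t)\|\le e^{-\lambda_1 t}\|\eta_0\|$. For the velocity, the Navier-slip conditions give $\langle B(u,u),u\rangle=0$. In two dimensions the vorticity $\curl u$ satisfies a transport-diffusion equation with zero boundary data, so $\langle B(u,u),Au\rangle$ can be controlled through the vorticity formulation. Combined with the Poincaré-type inequality $\|u\|_{\mathcal H^1}\le C\|\curl u\|$, valid on simply connected $D$ for $u\in\mathcal H^1$, this yields $\|u(t)\|_{\mathcal H^1}\le C(R)e^{-ct}$ for $\|U_0\|_{\mathcal X}\le R$. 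The forcing $\overline\eta$ decays exponentially and is absorbed by Gronwall. Hence there is $T_0=T_0(\varepsilon,R)$ with $\|U^{0}(t,U_0)\|_{\mathcal X}\le\varepsilon/2$ for all $t\ge T_0$, uniformly on $B_{\mathcal X}(0,R)$. Here $U^0$ denotes the noiseless solution.

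\emph{Step 2 (small noise with positive probability).} Write $\eta=e^{t\Delta_\omega}\eta_0+\Gamma(t)$, where $\Gamma(t)=\sum_j b_j\int_0^t e^{(t-r)\Delta_\omega}\psi_j\,d\beta_j(r)$ is a centered Gaussian process in $C([0,t];L^2(\omega))$. Its law is independent of $U_0$ and, being centered Gaussian, charges every ball around $0$; this is the classical support property. So for every $\delta>0$ and $t$, $\Pb(\sup_{s\le t}\|\Gamma(s)\|\le\delta)=:p(\delta,t)>0$.

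\emph{Step 3 (stability and conclusion).} Let $w=u-u^0$. Then $w$ solves a Navier--Stokes-type equation with forcing $\Pi\overline\Gamma$. An $\mathcal H^1$ energy estimate, using the a priori bounds on $u^0$ from Step 1 together with the Ladyzhenskaya/Agmon inequalities, gives $\sup_{s\le t}\|w(s)\|_{\mathcal H^1}\le C(R,t)\sup_{s\le t}\|\Gamma(s)\|$ as long as the right-hand side is at most $1$. We prove this by a bootstrap on the nonlinear term $B(w,w)+B(w,u^0)+B(u^0,w)$.

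The main obstacle is making this estimate uniform in $U_0\in B_{\mathcal X}(0,R)$ for $\mathcal H^1$-norms under the Navier-slip conditions. We handle it by working with the vorticity $\curl w$, for which the boundary condition is homogeneous Dirichlet, so the standard 2D estimates carry over. This is exactly where the boundary condition $\curl u=0$ is used.

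To conclude, fix $t\in[T_0,2T_0]$ and choose $\delta$ so small that $C(R,2T_0)\delta\le\varepsilon/2$ and $\delta\le\varepsilon/2$. On the event $\{\sup_{s\le t}\|\Gamma(s)\|\le\delta\}$ we then have $\|U(t,U_0)\|_{\mathcal X}\le\varepsilon$, and this event has probability $p>0$ independent of $U_0$. This proves \eqref{eq irreducibility} for $t\in[T_0,2T_0]$.

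For larger $t$, use the Markov property. By Lemma \ref{lemma L bdd}-type Lyapunov bounds, $\Pb(\|U(t-T_0,U_0)\|_{\mathcal X}\le R')\ge1/2$ for suitable $R'$ uniformly in $t$ and $U_0\in B_{\mathcal X}(0,R)$. Restart from such a point and apply the bounded-time statement with $R'$ in place of $R$, choosing $T=T_0(\varepsilon,R')$ and splitting $t$ accordingly. This gives the positive lower bound for all $t\ge T$.
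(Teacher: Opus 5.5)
Your proposal is correct and follows essentially the paper's proof: exponential decay of the unforced coupled system (Lemma~\ref{lemma stable}), an estimate for the velocity difference $u-\phi$ through its vorticity $r=\curl(u-\phi)$, which satisfies homogeneous Dirichlet conditions, and the support property of the centered Gaussian stochastic convolution, whose law does not depend on $U_0$.

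Two of your steps are unnecessary, though harmless. The bootstrap is not needed: since $\dvg u=0$ and $u\cdot n=0$, one has $\int_D(u\cdot\nabla r)r\,dx=0$, which cancels the quadratic term. The paper therefore gets the unconditional linear bound $\|u_t-\phi_t\|_{\mathcal H^1}^2\le C_{t,R}\int_0^t\|Z_s\|^2ds$. The Markov/Lyapunov restart is also not needed: \eqref{eq irreducibility} only requires positivity at each fixed $t\ge T$, not uniformly in $t$, so the direct argument already covers every $t\ge T_0$. Your restart is nonetheless valid, and it even gives a lower bound that is uniform in $t$.
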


    \begin{proof}
    Let $\varepsilon,R>0$ be fixed. By the global exponential stability in Lemma \ref{lemma stable}, for the unforced solution $\Phi$ of \eqref{eq NSunforced}, there exists $T=T(\varepsilon,R)>0$ such that
	\begin{equation}\label{eq stable}
	\|\Phi(t,U_0)\|_{\mathcal X}\leq\varepsilon/2\quad\forall\,U_0\in B_{\mathcal X}(0,R),\;t\geq T.
	\end{equation}
    Fix any $t\geq T$. We compare the stochastic solution with the unforced trajectory driven by the same initial condition. Let
	\begin{equation*}
	Z_s=\sum_{j\in\N^+}b_j\int_0^se^{(s-r)\Delta_\omega}\psi_jd\beta_j(r),\quad0\leq s\leq t,
	\end{equation*}
    be the stochastic convolution associated with the heat equation $\eta$. Then  $\eta$ can be written as
	\begin{equation*}
	\eta_s=\theta_s+Z_s,\quad0\leq s\leq t,
	\end{equation*}
    where $\Phi(s,U_0)=(\phi_s,\theta_s)$ is the unforced solution of equation \eqref{eq NSunforced} and $\theta_s=e^{s\Delta_\omega}\eta_0$.

    We next compare the velocity component $u$ with $\phi$. Set $v=u-\phi$, which satisfies
	\begin{equation*}
	\begin{cases}
	\partial_sv-\nu\Delta v+(u\cdot\nabla)v+(v\cdot\nabla)\phi+\nabla\pi=\overline{Z},\quad x\in D,\;s\in(0,t),\\
	\dvg v=0,\\
	v\cdot n=0,\quad\curl v=0,\quad x\in\partial D,\\
	v(0)=0.
	\end{cases}
	\end{equation*}
    Put $r=\curl v$ and $\varpi=\curl\phi$. Taking curl in the equation for $v$, we have
	\begin{equation*}
	\partial_sr-\nu\Delta r+u\cdot\nabla r+v\cdot\nabla\varpi=\curl\overline{Z},\quad r|_{\partial D}=0,\quad r(0)=0.
	\end{equation*}
    Multiplying by $r$ and using $u\cdot n=0$ on $\partial D$, we obtain
	\begin{align*}
	\frac{1}{2}\partial_s\|r\|^2+\nu\|\nabla r\|^2&=-\int_D(v\cdot\nabla\varpi)rdx+\langle\curl\overline{Z},r\rangle.
	\end{align*}
    The div-curl estimate and the Gagliardo--Nirenberg inequality give
	\begin{align*}
	\left|\int_D(v\cdot\nabla\varpi)rdx\right|&\leq C\|v\|_{L^4(D)}\|\nabla\varpi\|\|r\|_{L^4(D)}\leq C\|\nabla\varpi\|\|r\|^{3/2}\|\nabla r\|^{1/2}\\
	&\leq\frac{\nu}{4}\|\nabla r\|^2+C(1+\|\nabla\varpi\|^2)\|r\|^2,
	\end{align*}
    while
	\begin{equation*}
	|\langle\curl\overline{Z},r\rangle|\leq\left|\int_D\overline{Z}\cdot\nabla^\perp rdx\right|\leq\frac{\nu}{4}\|\nabla r\|^2+C\|Z\|^2.
	\end{equation*}
    Hence
	\begin{equation*}
	\partial_s\|r\|^2\leq C(1+\|\nabla\varpi\|^2)\|r\|^2+C\|Z\|^2.
	\end{equation*}
    Moreover, integrating the vorticity energy inequality in the proof of Lemma \ref{lemma stable} gives
	\begin{equation*}
	\int_0^t\|\nabla\varpi_s\|^2ds\leq C\left(\|\curl u_0\|^2+\int_0^t\|\theta_s\|^2ds\right)\leq C_{t,R}\quad\forall\,U_0\in B_{\mathcal X}(0,R).
	\end{equation*}
    Therefore, by Gronwall's inequality, we derive
	\begin{equation}\label{eq Z}
	\|u_t-\phi_t\|_{\mathcal H^1}^2\leq C\|r_t\|^2\leq C_{t,R}\int_0^t\|Z_s\|^2ds.
	\end{equation}

    Combining \eqref{eq stable} and \eqref{eq Z}, we conclude that for any $U_0\in B_{\mathcal X}(0,R)$
	\begin{align*}
	\Pb\left(\|U(t,U_0)\|_{\mathcal X}\leq\varepsilon\right)&\geq\Pb\left(\|U(t,U_0)-\Phi(t,U_0)\|_{\mathcal X}\leq\varepsilon/2,\quad\|\Phi(t,U_0)\|_{\mathcal X}\leq\varepsilon/2\right)\\
	&=\Pb\left(\|u_t-\phi_t\|_{\mathcal H^1}^2+\|Z_t\|^2\leq\varepsilon^2/4\right)\\
	&\geq\Pb\left(C_{t,R}\int_0^t\|Z_s\|^2ds+\|Z_t\|^2\leq\varepsilon^2/4\right)\\
	&:=\Pb(\Omega_{t,R,\varepsilon})>0.
	\end{align*}

    Here in the last line, we use the fact that $Z$ is a centered Gaussian random variable in the Hilbert space $L^2(0,t;L^2(\omega;\R^2))\times L^2(\omega;\R^2)$.  Thus the standard support property of Gaussian measures implies that the positivity of the probability, which is uniform on $U_0\in B_{\mathcal X}(0,R)$.  This completes the proof of \eqref{eq irreducibility}, i.e. Proposition \ref{prop irreducibility}.
    \end{proof}

    \subsection{Proof of Theorem \ref{thm 2}}

    Combining the preceding estimates, we now apply Theorem~\ref{thm HM} to the coupled Markov process. In an equivalent Hilbert norm on $\mathcal X$, the Lyapunov estimate and Propositions~\ref{prop ASF} and \ref{prop irreducibility} verify the three hypotheses of the criterion, yielding the unique invariant measure and exponential mixing asserted in Theorem~\ref{thm 2}.

    \begin{proof}
    By Section \ref{Sec 2.1}, the coupled system \eqref{eq NS},\eqref{eq xi} defines a Feller Markov process with $C^1$ dependence on the initial condition. We verify the hypotheses of Theorem~\ref{thm HM} in an equivalent Hilbert space $(\widehat{\mathcal X},\|\cdot\|_{\widehat{\mathcal X}})$ adapted to the coupled energy estimate. 
    
    Fix $B_0>0$, and let $K>0$ be the constant chosen in Lemma~\ref{lemma L bdd}. Define
	\begin{equation*}
	\|U\|_{\widehat{\mathcal X}}^2:=\|\curl u\|^2+K\|\eta\|^2,\quad\mathcal Q(U):=\|\nabla\curl u\|^2+K\|\nabla\eta\|^2.
	\end{equation*}
    By the div-curl estimate and the boundary conditions, $\|\cdot\|_{\widehat{\mathcal X}}$ is equivalent to the original phase-space norm: there exists a constant $C_0\geq1$ such that
	\begin{equation}\label{eq Xhat}
	C_0^{-1}\|U\|_{\mathcal X}^2\leq\|U\|_{\widehat{\mathcal X}}^2\leq C_0\|U\|_{\mathcal X}^2,\quad U\in \mathcal X.
	\end{equation}
    According to Theorem \ref{thm HM}, it suffices to verify conditions \eqref{eq HM1}-\eqref{eq HM3} with $X=\widehat{\mathcal X}$.

    \vspace{0.3em}
    \begin{itemize}[leftmargin=2em]
        \item[(1)] {\it Lyapunov estimate.} By the proof of Lemma \ref{lemma L bdd}(1), there exist constants $c_0,C_1,C>0$ such that
	\begin{equation*}
	d\|U_t\|_{\widehat{\mathcal X}}^2+c_0\|U_t\|_{\widehat{\mathcal X}}^2dt+c_0\mathcal Q(U_t)dt\leq Cdt+dM_t,\quad d\langle M\rangle_t\leq C_1\|U_t\|_{\widehat{\mathcal X}}^2dt.
	\end{equation*}
        We then take $\gamma_*=\gamma_*(c_0,C_1)=\gamma_*(B_0)>0$ satisfying $3\gamma_*C_1=c_0/2$. Then, for $c=c_0/2$ and any $\gamma\in(0,\gamma_*]$, $r\in[1/4,3]$,
	\begin{equation*}
	d\|U_t\|_{\widehat{\mathcal X}}^2+c\|U_t\|_{\widehat{\mathcal X}}^2dt+c\mathcal Q(U_t)dt\leq Cdt+dM_t-r\gamma d\langle M\rangle_t.
	\end{equation*}
        Hence for any $0\leq t\leq1$, one has
	\begin{equation*}
	\|U_t\|_{\widehat{\mathcal X}}^2-e^{-ct}\|U_0\|_{\widehat{\mathcal X}}^2+c\int_0^te^{-c(t-s)}\mathcal Q(U_s)ds-C\leq\int_0^te^{-c(t-s)}dM_s-r\gamma\int_0^te^{-c(t-s)}d\langle M\rangle_s.
	\end{equation*}
        
        Consequently, using  \cite[Lemma A.1]{Mattingly-02b} and arguments as in the proof of Lemma \ref{lemma L bdd}(1), we derive that
	\begin{equation*}
	\E\exp\left(r\gamma\|U_t\|_{\widehat{\mathcal X}}^2+r\gamma c\int_0^te^{-c(t-s)}\mathcal Q(U_s)ds\right)\leq C\exp\left(r\gamma e^{-ct}\|U_0\|_{\widehat{\mathcal X}}^2\right).
	\end{equation*}
        On the other hand, by \eqref{eq J1} and \eqref{eq Xhat}, for any $\kappa>0$, there exists a constant $C_\kappa>0$ such that
	\begin{equation*}
	\|\mathcal J_{0,t}\|_{\mathcal L(\widehat{\mathcal X})}\leq C_\kappa\exp\left(C\kappa\int_0^t\mathcal Q(U_s)ds\right),\quad0\leq t\leq1.
	\end{equation*}
        Since $\int_0^t\mathcal Q(U_s)ds\leq e^c\int_0^te^{-c(t-s)}\mathcal Q(U_s)ds$, we then choose $\kappa=\kappa(\gamma)>0$ such that $C\kappa e^c=\gamma c/4$. Then for every $r\in[1/4,3]$,
	\begin{align*}
	\E\exp\left(r\gamma\|U_t\|_{\widehat{\mathcal X}}^2\right)\|\mathcal J_{0,t}\|_{\mathcal L(\widehat{\mathcal X})}&\leq C\E\exp\left(r\gamma\|U_t\|_{\widehat{\mathcal X}}^2+r\gamma c\int_0^te^{-c(t-s)}\mathcal Q(U_s)ds\right)\\
	&\leq C\exp\left(r\gamma e^{-ct}\|U_0\|_{\widehat{\mathcal X}}^2\right).
	\end{align*}
         Therefore, combining this with the preceding Lyapunov estimate, we conclude that, for the same $\gamma_*$ and $c$, and for any $\gamma\in(0,\gamma_*]$,  
	\begin{equation*}
	\E\exp\left(r\gamma\|U_t\|_{\widehat{\mathcal X}}^2\right)\left(1+\|\mathcal J_{0,t}\|_{\mathcal L(\widehat{\mathcal X})}\right)\leq C_\gamma\exp\left(r\gamma e^{-ct}\|U_0\|_{\widehat{\mathcal X}}^2\right),
	\end{equation*}
        for any $U_0\in \mathcal X$, $r\in[1/4,3]$ and $t\in[0,1]$. This verifies \eqref{eq HM1} on $\widehat{\mathcal X}$ with $\xi(t)=e^{-ct}$.

        \vspace{0.3em}

        \item[(2)] {\it Asymptotic strong Feller property.} Let $\gamma\in(0,\frac{1}{2}\min\{\gamma_*,C_0^{-1}\}]$. Invoking Proposition \ref{prop ASF} with this $\gamma$ and $\alpha=1$, there exists $N=N(B_0,\gamma)\in\N^+$ such that, if
	\begin{equation*}
	\sum_{j\in\N^+}b_j^2\leq B_0,\quad b_j\neq0\quad\text{for }1\leq j\leq N,
	\end{equation*}
        then, using \eqref{eq Xhat} and the equivalence of $\|\cdot\|_{\mathcal X}$ and $\|\cdot\|_{\widehat{\mathcal X}}$, for every Fr\'echet differentiable $\varphi\colon \mathcal X\to\R$, $U_0\in \mathcal X$ and $t\geq0$,
	\begin{align*}
	\|\nabla P_t\varphi(U_0)\|_{\widehat{\mathcal X}}\leq C\exp\left(\tfrac{\gamma}{2}\|U_0\|_{\widehat{\mathcal X}}^2\right)\left(\sqrt{P_t(|\varphi|^2)(U_0)}+e^{-t}\sqrt{P_t(\|\nabla\varphi\|_{\widehat{\mathcal X}}^2)(U_0)}\right),
	\end{align*}
        which verifies the asymptotic strong Feller property \eqref{eq HM2} on $\widehat{\mathcal X}$.

        \vspace{0.3em}

        \item[(3)] {\it Irreducibility.} Let $\rho_{r,\widehat{\mathcal X}}$ be the distance appearing in \eqref{eq HM3} with $X=\widehat{\mathcal X}$. For any $\varepsilon,R>0$, $\gamma\in(0,\gamma_*]$ and $r\in(0,1)$, take $\delta=\delta(\varepsilon,r,\gamma,C_0)>0$ such that for  any $V,\hat V\in B_{\mathcal X}(0,\delta)$,
	\begin{equation*}
	\rho_{r,\widehat{\mathcal X}}(V,\hat V)\leq2C_0^{1/2}\delta\exp\left(r\gamma C_0\delta^2\right)\leq\varepsilon.
	\end{equation*}
        Moreover, note that $B_{\widehat{\mathcal X}}(0,R)\subset B_{\mathcal X}(0,C_0^{1/2}R)$. For any $U_0,\hat U_0\in B_{\widehat{\mathcal X}}(0,R)$ and $t\geq0$, let $\hat{\Gamma}_t\in\mathscr{C}(P_t^*\delta_{U_0},P_t^*\delta_{\hat U_0})$ be the independent coupling
	\begin{equation*}
	\hat{\Gamma}_t(A_1\times A_2)=P_t(U_0,A_1)P_t(\hat U_0,A_2),\quad A_1,A_2\in\mathcal{B}(\mathcal X).
	\end{equation*}
        Then we compute that, for any $\varepsilon,R>0$, $\gamma\in(0,\gamma_*]$ and $r\in(0,1)$,
	\begin{align*}
	&\inf_{U_0,\hat U_0\in B_{\widehat{\mathcal X}}(0,R)}\sup_{\Gamma\in\mathscr{C}(P_t^*\delta_{U_0},P_t^*\delta_{\hat U_0})}\Gamma\left\{(V,\hat V)\in \mathcal X\times \mathcal X:\rho_{r,\widehat{\mathcal X}}(V,\hat V)\leq\varepsilon\right\}\\
	&\qquad\geq\inf_{U_0,\hat U_0\in B_{\widehat{\mathcal X}}(0,R)}\hat{\Gamma}_t\left\{(V,\hat V)\in \mathcal X\times \mathcal X:\rho_{r,\widehat{\mathcal X}}(V,\hat V)\leq\varepsilon\right\}\\
	&\qquad\geq\inf_{U_0,\hat U_0\in B_{\widehat{\mathcal X}}(0,R)}\hat{\Gamma}_t\left(B_{\mathcal X}(0,\delta)\times B_{\mathcal X}(0,\delta)\right)\\
	&\qquad\geq\left(\inf_{U_0\in B_{\mathcal X}(0,C_0^{1/2}R)}\Pb\left(U(t,U_0)\in B_{\mathcal X}(0,\delta)\right)\right)^2>0,
	\end{align*}
        provided that $t\geq T=T(\delta,C_0^{1/2}R)$, where $T$ is given by Proposition \ref{prop irreducibility}. 
        This ensures the irreducibility \eqref{eq HM3} on $\widehat{\mathcal X}$.
    \end{itemize}

    \vspace{0.3em}

    Collecting these results with $\gamma=\frac{1}{2}\min\{\gamma_*,C_0^{-1}\}$ and applying Theorem \ref{thm HM} on $\widehat{\mathcal X}$, we conclude that for the above $N=N(B_0,\gamma)=N(B_0)$, if
	\begin{equation*}
	\sum_{j\in\N^+}b_j^2\leq B_0\quad\text{and}\quad b_j\neq0\quad\text{for }1\leq j\leq N,
	\end{equation*}
    then, the Markov process on the original phase space $\mathcal X$ admits a unique invariant measure $\mu_*$.
    
    Additionally, using Theorem \ref{thm HM}, it follows that
	\begin{equation*}
	\left|P_tf(U_0)-\int_Xfd\mu_*\right|\leq Ce^{-\alpha t}\exp\left(\gamma\|U_0\|_{\widehat{\mathcal X}}^2\right)\|f\|_{\gamma,\widehat{\mathcal X}},
	\end{equation*}
    where
	\begin{equation*}
	\|f\|_{\gamma,\widehat{\mathcal X}}:=\sup_{U\in \mathcal X}\exp\left(-\gamma\|U\|_{\widehat{\mathcal X}}^2\right)\left(|f(U)|+\|\nabla f(U)\|_{\widehat{\mathcal X}}\right).
	\end{equation*}
    Finally, by \eqref{eq Xhat}, with $\bar{\gamma}=C_0^{-1}\gamma$,
	\begin{equation*}
	\exp\left(\gamma\|U_0\|_{\widehat{\mathcal X}}^2\right)\leq\exp\left(C_0\bar{\gamma}\|U_0\|_{\mathcal X}^2\right),\quad\|f\|_{\gamma,\widehat{\mathcal X}}\leq C\|f\|_{\bar{\gamma}}.
	\end{equation*}
    Relabeling $\bar{\gamma}$ as $\gamma$ gives \eqref{eq mixing2}.
    This completes the proof of Theorem \ref{thm 2}.
    \end{proof}

     \subsection{Proof of Theorem \ref{thm 1}} 

     We finally pass from the coupled Markov system to the Navier--Stokes equation driven by the stationary forcing. Starting the heat component from its invariant Gaussian law gives the same forcing path law as $\eta^s$, so averaging the mixing estimate of Theorem~\ref{thm 2} over $\mu_s$ yields exponential convergence to the velocity marginal $\mu=(\Pi_n)_\#\mu_*$. The disintegration of $\mu_*$ then provides the jointly stationary solution required in Theorem~\ref{thm 1}.

    \begin{proof}
    By Theorem \ref{thm 2}, there exists a unique invariant measure $\mu_*$ for the coupled system $(u,\eta)$ of equation \eqref{eq NS},\eqref{eq eta} and constant $\gamma,\alpha,C>0$ such that \eqref{eq mixing2} holds. We then define
	\begin{equation*}
	\mu:=(\Pi_u)_\#\mu_*\in\mathcal{P}(\mathcal H^1),
	\end{equation*}
    where $\Pi_u$ denotes the projection form $\mathcal X$ to $\mathcal H^1$. Meanwhile, let us define 
	\begin{equation*}
	\eta_0^{s}:=\sum_{j\in\N^+}b_j\int_{-\infty}^{0}e^{\lambda_jr}d\hat\beta_j(r)\psi_j,
	\end{equation*}
    which satisfies that $\law(\eta_0^{s})=\mu_s$.  Moreover, $\eta_0^{s}$ depends only on the Brownian paths before time $0$ and is therefore independent of the future increments
    $\{\hat\beta_j(t)-\hat\beta_j(0):t\geq0\}$. In view of the exponential mixing \eqref{eq mixing2} and \eqref{eq mu_*}, one has
	\begin{equation*}
	M:=\int_{L^2(\omega;\R^2)}\exp\left(C_0\gamma\|\eta\|^2\right)\mu_s(d\eta)<\infty,
	\end{equation*}
    where $C_0>0$ is the constant given by \eqref{eq Xhat}.
    
    Let $\eta(t,\eta_0^{s})$ be the solution of the heat equation \eqref{eq eta} driven by these future increments. Splitting the stochastic integral in \eqref{eq eta_s} at time $0$ gives, for any $t\geq0$, one has
	\begin{align*}
	\eta(t,\eta_0^{s})=e^{t\Delta_\omega}\eta_0^{s}+\sum_{j\in\N^+}b_j\int_0^te^{-\lambda_j(t-r)}d\hat\beta_j(r)\psi_j=\eta^{s}(t).
	\end{align*}
    Consequently, the velocity $u(t,u_0)$ in Theorem~\ref{thm 1} has the same law as the velocity component of the coupled solution starting from $(u_0,\eta_0^{s})$.

   For any $\varphi\in\mathcal O_{\mathcal H^1,\gamma}$, by setting $F(u,\eta)=\varphi(u)$, we then have $F\in\mathcal O_{\mathcal X,\gamma}$ and
	\begin{equation*}
	\int_XFd\mu_*=\int_{\mathcal H^1}\varphi d\mu,\quad\|F\|_{\mathcal X,\gamma}\leq\|\varphi\|_{\mathcal H^1,\gamma}.
	\end{equation*}
    
    Therefore, invoking Theorem~\ref{thm 2}, we derive that
	\begin{align*}
	\left|\E\varphi(u(t,u_0))-\int_{\mathcal H^1}\varphi d\mu\right|&=\left|\int_{L^2(\omega;\R^2)}P_tF(u_0,\eta)\,\mu_s(d\eta)-\int_XFd\mu_*\right|\\
	&\leq\int_{L^2(\omega;\R^2)}\left|P_tF(u_0,\eta)-\int_XFd\mu_*\right|\mu_s(d\eta)\\
	&\leq Ce^{-\alpha t}\exp\left(C_0\gamma\|u_0\|_{\mathcal H^1}^2\right)\|\varphi\|_{\mathcal H^1,\gamma}\int_{L^2(\omega;\R^2)}\exp\left(C_0\gamma\|\eta\|^2\right)\mu_s(d\eta)\\
	&\leq CMe^{-\alpha t}\exp\left(C_0\gamma\|u_0\|_{\mathcal H^1}^2\right)\|\varphi\|_{\mathcal H^1,\gamma},
	\end{align*}
    which verifies \eqref{eq mixing1}.

    \vspace{0.3em}
    It remains to verify \eqref{eq stationary}. By Remark~\ref{Rem 1}, one has $(\Pi_\eta)_\#\mu_*=\mu_s$. Moreover, using the disintegration \eqref{eq mu_*}, on an enlargement of the two-sided Brownian probability space, we choose $u^s(0)$ conditionally on $\eta^s(0)$ according to $p_{\eta^s(0)}$, with the auxiliary randomization independent of $\{\hat\beta_j\}_{j\in\N^+}$. Then
	\begin{equation*}
	\law\left(u^s(0),\eta^s(0)\right)=\mu_*,
	\end{equation*}
    and this initial pair is independent of the future Brownian increments. The corresponding coupled solution therefore starts from its invariant law and is stationary for nonnegative times.

    By taking a standard two-sided stationary extension, we derive a jointly stationary process $(u^s,\widetilde\eta^s)$ satisfying
	\begin{equation*}
	\law\left(u^s(t),\widetilde\eta^s(t)\right)=\mu_*,\quad t\in\R.
	\end{equation*}
    The heat component is autonomous and has the unique stationary path law associated with \eqref{eq eta}. After passing to an equivalent realization, it may therefore be identified with the process $\eta^s$ defined in \eqref{eq eta_s}. Consequently,
	\begin{equation*}
	\law\left(u^s(t)\right)=(\Pi_u)_\#\mu_*=\mu,\quad t\in\R,
	\end{equation*}
    which proves \eqref{eq stationary}.  This completes the proof of Theorem \ref{thm 1}.
    \end{proof}

\section*{Appendix}

    \stepcounter{section}
    
    \numberwithin{equation}{subsection}
    \numberwithin{theorem}{subsection}
    
    \setcounter{equation}{0}
    \setcounter{theorem}{0}

    In the appendix, we summarize some useful supplementary materials and technical proofs.
    
    \renewcommand\thesubsection{\Alph{subsection}}

    \subsection{Deterministic Navier--Stokes estimates}\label{Sec A}

    This appendix collects deterministic estimates for the 2D Navier--Stokes equation. We first consider the equation with a non-autonomous deterministic forcing $f\in L^2(0,T;H^1(D;\R^2))$:
	\begin{equation}\label{eq NS det}
	\begin{cases}
	\partial_tu-\nu\Delta u+(u\cdot\nabla)u+\nabla\pi=f(t,x),\quad x\in D,\;t\in(0,T),\\
	\dvg u=0,\\
	u\cdot n=0,\quad\curl u=0,\quad x\in\partial D,\\
	u(0,\cdot)=u_0(\cdot)\in \mathcal H^1.
	\end{cases}
	\end{equation}

     \begin{lemma}\label{NS estimate} For every $T>0$, there exists a constant $C>0$ such that for any $u_0\in \mathcal H^1$ and $f\in L^2(0,T;H^{1}(D;\R^2))$, the solution $u$ of \eqref{eq NS det} satisfies
	\begin{align}
	\|u\|_{L^2(0,T;\mathcal H^2)}&\leq C\left(\|u_0\|_{\mathcal H^1}+\|f\|_{L^2(0,T;H^{1}(D))}\right).\label{eq u1}\\
	\|u\|_{L^\infty((T/2,T)\times D)}&\leq C\left(\|u_0\|_{\mathcal H^1}+\|f\|_{L^2(0,T;H^{1}(D))}\right),\label{eq u2}\\
	\|\partial_tu\|_{L^2(T/2,T;L^4(D))}&\leq C\left(1+\|u_0\|_{\mathcal H^1}^2+\|f\|_{L^2(0,T;H^{1}(D))}^2\right).\label{eq u3}
	\end{align}
    \end{lemma}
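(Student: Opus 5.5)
Our plan is to work at the level of the vorticity $\varpi=\curl u$, which for Navier-slip conditions satisfies a scalar advection--diffusion equation with homogeneous Dirichlet data:
\begin{equation*}
\partial_t\varpi-\nu\Delta\varpi+u\cdot\nabla\varpi=\curl f,\qquad \varpi|_{\partial D}=0,\qquad \varpi(0)=\curl u_0 .
\end{equation*}
The div--curl estimate converts bounds on $\varpi$ into bounds on $u$, namely $\|u\|_{\mathcal H^{k+1}}\leq C\|\varpi\|_{H^k}$ for $k=0,1$, using $\dvg u=0$, $u\cdot n=0$ and simple connectivity.

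\textbf{Step 1: proof of \eqref{eq u1}.} We multiply the vorticity equation by $\varpi$. Since $u\cdot n=0$, the transport term integrates to zero. The forcing is handled by writing $\langle\curl f,\varpi\rangle\leq\|f\|_{H^1}\|\varpi\|$ (or by integrating by parts, as in the irreducibility proof). This gives
\begin{equation*}
\sup_{t\leq T}\|\varpi\|^2+\nu\int_0^T\|\nabla\varpi\|^2\leq C\bigl(\|\curl u_0\|^2+\|f\|_{L^2H^1}^2\bigr).
\end{equation*}
Combining this with the div--curl bound $\|u\|_{\mathcal H^2}\leq C\|\varpi\|_{H^1}$ yields \eqref{eq u1}, with a bound that is linear in the data. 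No Gronwall factor depending on the data appears, because the nonlinearity drops out of the vorticity energy identity.

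\textbf{Step 2: proof of \eqref{eq u2}.} An $L^\infty$ bound on $u$ in 2D follows from $\|u\|_{L^\infty}\leq C\|u\|_{H^{1+s}}$, and this requires $L^\infty$-in-time control of some norm above $H^1$. We plan to get it from the maximum principle for the vorticity on $(T/2,T)$. De Giorgi/Nash iteration, or the $L^p$ energy estimates $\frac{d}{dt}\|\varpi\|_{L^p}^p\leq p\|\curl f\|\cdots$, give a bound on $\|\varpi\|_{L^\infty(T/2,T;L^p)}$ for some $p>2$ that is linear in the data. Here $\curl f\in L^2_tL^2_x$, and the parabolic smoothing from time $0$ to $T/2$ handles the initial datum, which is only in $L^2$. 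Then $W^{1,p}\hookrightarrow L^\infty$ for $p>2$ closes the estimate. The advection term is again harmless, since $u$ is divergence free and tangent to the boundary.

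\textbf{Step 3: proof of \eqref{eq u3}.} We write $\partial_tu=\nu\Delta u-\Pi(u\cdot\nabla)u+\Pi f$. The nonlinear term is bounded by $\|u\|_{L^\infty}\|\nabla u\|_{L^4}$. By \eqref{eq u2} and \eqref{eq u1}, together with $\|\nabla u\|_{L^4}\leq C\|u\|_{\mathcal H^2}$, this term lies in $L^2(T/2,T;L^4)$ with a bound quadratic in the data. The forcing satisfies $\|\Pi f\|_{L^4}\leq C\|f\|_{H^1}$.

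\textbf{Main obstacle.} The difficult term is $\Delta u$ in $L^2_tL^4_x$, which needs $u\in L^2(T/2,T;W^{2,4})$, equivalently $\nabla\varpi\in L^2L^4$. We plan to use a time cutoff $\chi(t)$ vanishing near $0$ and apply maximal $L^2_tL^4_x$ regularity for the Dirichlet heat operator to $\chi\varpi$. The forcing for $\chi\varpi$ is $\chi\curl f+\chi'\varpi-\chi u\cdot\nabla\varpi$. The first two terms are only in $L^2_tL^2_x$, so maximal regularity gives $\chi\varpi\in L^2H^2$, and $H^2\hookrightarrow W^{1,4}$ in 2D then gives $\nabla\varpi\in L^2L^4$. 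The transport term is bounded in $L^2L^2$ by $\|u\|_{L^\infty}\|\nabla\varpi\|$. With Steps 1 and 2 this bound is quadratic, which matches the right-hand side of \eqref{eq u3}.
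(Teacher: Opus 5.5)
Your proposal is correct. Steps 1 and 2 are essentially the paper's argument; the paper makes your ``parabolic smoothing'' concrete as follows. It picks $r_0\in(T/4,3T/8)$ with $\|\varpi_{r_0}\|_{H^1}\leq CM$, where $M=\|u_0\|_{\mathcal H^1}+\|f\|_{L^2(0,T;H^1(D))}$, using the $L^2_tH^1$ bound from Step 1. It then runs the $L^4$ vorticity estimate, absorbing $\int_D\curl f\,|\varpi|^2\varpi\,dx$ by Gagliardo--Nirenberg. It concludes with the $L^4$ div--curl bound $\|u\|_{W^{1,4}}\leq C\|\varpi\|_{L^4}$ and $W^{1,4}\hookrightarrow L^\infty$. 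Note that you need this $L^p$ div--curl estimate, not only the $H^{k+1}$ version you state at the start. The $L^4$ energy route is the one to take, since De Giorgi with forcing only in $L^2_tL^2_x$ is borderline in 2D and is not needed.

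Your route to \eqref{eq u3} genuinely differs from the paper's. The paper works at the velocity level: it applies $L^2_tL^4_x$ maximal regularity for the Stokes operator with Navier-type boundary conditions, an $L^p$ result it cites, to $\chi u$ with forcing $\chi f-\chi(u\cdot\nabla)u+\chi' u$. You stay at the vorticity level. You need only Hilbertian $L^2$ maximal regularity for the scalar Dirichlet heat equation applied to $\chi\varpi$, then $H^2\hookrightarrow W^{1,4}$ and the identity $\Delta u=\nabla^\perp\varpi$ for divergence-free $u$. That identity also justifies $\Pi\Delta u=\Delta u$, since $\nabla^\perp\varpi\cdot n$ is a tangential derivative of $\varpi$, which vanishes on $\partial D$.

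Your approach is more elementary because it exploits the 2D fact that under Navier-slip conditions the vorticity solves a scalar Dirichlet problem. It can even avoid the $L^4$-boundedness of $\Pi$. Maximal regularity also gives $\partial_t\varpi\in L^2(T/2,T;L^2)$, and since $\partial_t u$ is divergence free and tangent, the div--curl estimate gives $\|\partial_tu\|_{L^4}\leq C\|\partial_tu\|_{H^1}\leq C\|\partial_t\varpi\|$. The paper's velocity-level argument does not rely on the vorticity formulation, so it would survive changes of boundary condition or dimension, at the price of a heavier $L^p$ Stokes result.

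One bookkeeping point. Since $\chi$ must equal $1$ on $[T/2,T]$ and vanish near $0$, its support extends below $T/2$. Bounding $\chi u\cdot\nabla\varpi$ in $L^2_tL^2_x$ therefore needs the $L^\infty$ bound on $u$ over all of $\operatorname{supp}\chi$, not just $(T/2,T)$. So run Step 2 on a larger interval such as $(T/4,T)$; the paper uses $(3T/8,T)$. Your argument gives this with no change beyond adjusting the constants.
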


\begin{proof}
The constant $C$ below may vary from line to line, but depends only on $T,\nu$ and $D$. Set
	\begin{equation*}
	M:=\|u_0\|_{\mathcal H^1}+\|f\|_{L^2(0,T;H^1(D))}.
	\end{equation*}

For any $1<q<\infty$ and any divergence-free vector field $v$ satisfying $v\cdot n=0$ on $\partial D$, the stream-function representation and the simple connectedness of $D$ imply that
	\begin{equation}\label{eq d-c}
	\|v\|_{W^{1,q}(D)}\leq C\|\curl v\|_{L^q(D)}.
	\end{equation}
Indeed, one may write $v=\nabla^\perp\psi$ with $\psi|_{\partial D}=0$ and $-\Delta\psi=\curl v$, and then apply the elliptic estimate for the Dirichlet Laplacian.

\vspace{0.3em}

Let $\varpi=\curl u$. Taking curl in \eqref{eq NS det}, we compute that
	\begin{equation}\label{eq varpi}
	\partial_t\varpi+u\cdot\nabla\varpi-\nu\Delta\varpi=\curl f,\quad\varpi|_{\partial D}=0,\quad\varpi(0)=\curl u_0.
	\end{equation}
Multiplying \eqref{eq varpi} by $\varpi$, we obtain
	\begin{equation*}
	\frac{1}{2}\partial_t\|\varpi\|^2+\nu\|\nabla\varpi\|^2=\langle\curl f,\varpi\rangle\leq\frac{\nu}{2}\|\nabla\varpi\|^2+C\|f\|_{H^1(D)}^2.
	\end{equation*}
Since $\|\curl u_0\|\leq C\|u_0\|_{\mathcal H^1}$, by integration, we have
	\begin{equation}\label{eq varpi2}
	\|\varpi\|_{L^\infty(0,T;L^2(D))}+\|\nabla\varpi\|_{L^2(0,T;L^2(D))}\leq CM.
	\end{equation}
Using \eqref{eq d-c} with $q=2$ and the elliptic estimate for the stream function, the desired inequality \eqref{eq u1} follows from \eqref{eq varpi2} by
	\begin{equation*}
	\|u\|_{L^\infty(0,T;\mathcal H^1)}+\|u\|_{L^2(0,T;\mathcal H^2)}\leq CM.
	\end{equation*}

\vspace{0.3em}

We next prove the $L^\infty$ estimate \eqref{eq u2}. From \eqref{eq varpi2}, there exists $r_0\in(T/4,3T/8)$ such that
	\begin{equation*}
	\|\varpi_{r_0}\|^2+\|\nabla\varpi_{r_0}\|^2\leq CM^2.
	\end{equation*}
Thus the Ladyzhenskaya implies $\|\varpi_{r_0}\|_{L^4(D)}\leq CM$. Multiplying \eqref{eq varpi} by $|\varpi|^2\varpi$, one has
	\begin{equation*}
	\frac14\partial_t\|\varpi\|_{L^4(D)}^4+\frac{3\nu}{4}\|\nabla(\varpi^2)\|^2=\int_D\curl f\,|\varpi|^2\varpi dx.
	\end{equation*}
The 2D Gagliardo--Nirenberg inequality yields
	\begin{equation*}
	\|\varpi\|_{L^6(D)}^3=\|\varpi^2\|_{L^3(D)}^{3/2}\leq C\|\varpi^2\|^{1/2}\|\nabla(\varpi^2)\|=C\|\varpi\|_{L^4(D)}\|\nabla(\varpi^2)\|.
	\end{equation*}
Hence
	\begin{align*}
	\left|\int_D\curl f\,|\varpi|^2\varpi dx\right|\leq C\|\curl f\|\|\varpi\|_{L^4(D)}\|\nabla(\varpi^2)\|\leq\frac{3\nu}{8}\|\nabla(\varpi^2)\|^2+C\|\curl f\|^2\|\varpi\|_{L^4(D)}^2.
	\end{align*}
This implies that
	\begin{equation*}
	\partial_t\|\varpi\|_{L^4(D)}^4=2\|\varpi\|_{L^4(D)}^2\partial_t\|\varpi\|_{L^4(D)}^2\leq C\|\curl f\|^2\|\varpi\|_{L^4(D)}^2.
	\end{equation*}
Thus we have
	\begin{equation}\label{eq omegaL4}
	\|\varpi\|_{L^\infty(3T/8,T;L^4(D))}\leq CM.
	\end{equation}
Combining \eqref{eq d-c}, \eqref{eq omegaL4} and the Sobolev embedding $W^{1,4}(D)\hookrightarrow L^\infty(D)$ gives
	\begin{equation}\label{eq u4}
	\|u\|_{L^\infty(3T/8,T;W^{1,4}(D))}+\|u\|_{L^\infty((3T/8,T)\times D)}\leq CM,
	\end{equation}
which proves \eqref{eq u2}.

\vspace{0.3em}

It remains to estimate $\partial_tu$ in $L^2(T/2,T;L^4(D))$. Let $P_4$ be the Helmholtz projection on $L^4(D;\R^2)$ and let $A_4$ be the corresponding slip Stokes operator. We use the $L^p$-$L^q$ maximal for the Stokes operator with
Navier-type boundary conditions; see e.g. \cite[Theorem 7.17]{AAE-17}, which gives
	\begin{equation}\label{eq v11}
	\|\partial_tv\|_{L^2(0,T;L^4(D))}+\|A_4v\|_{L^2(0,T;L^4(D))}\leq C\|G\|_{L^2(0,T;L^4(D))}
	\end{equation}
for the equation $\partial_tv+\nu A_4v=P_4G$ with $v(0)=0$.

Choose $\chi\in C^\infty([0,T])$ satisfying $\chi=0$ on $[0,3T/8]$ and $\chi=1$ on $[T/2,T]$. Set $v=\chi u$. Then
	\begin{equation*}
	\partial_tv+\nu A_4v=P_4\left(\chi f-\chi(u\cdot\nabla)u+\chi'u\right),\quad v(0)=0.
	\end{equation*}
By \eqref{eq u4},
	\begin{align*}
	\|(u\cdot\nabla)u\|_{L^2(3T/8,T;L^4(D))}&\leq\|u\|_{L^\infty((3T/8,T)\times D)}\|\nabla u\|_{L^2(3T/8,T;L^4(D))}\leq CM^2.
	\end{align*}
Moreover,
	\begin{equation*}
	\|f\|_{L^2(0,T;L^4(D))}\leq CM,\quad\|\chi'u\|_{L^2(0,T;L^4(D))}\leq CM.
	\end{equation*}
Applying \eqref{eq v11}, we obtain
	\begin{equation*}
	\|\partial_tu\|_{L^2(T/2,T;L^4(D))}\leq C(M+M^2)\leq C(1+M^2).
	\end{equation*}
This proves inequality \eqref{eq u3} and completes the proof of Lemma \ref{NS estimate}.
\end{proof}

\vspace{0.6em}

We also formulate the deterministic stability. Consider the unforced  couple system given by
	\begin{equation}\label{eq NSunforced}
	\begin{cases}
	\partial_t\phi-\nu\Delta\phi+(\phi\cdot\nabla)\phi+\nabla\pi=\overline{\theta}(t,x),\quad x\in D,\;t>0,\\
	\dvg\phi=0,\\
	\phi\cdot n=0,\quad\curl\phi=0,\quad x\in\partial D,\\
	\partial_t\theta-\Delta_\omega\theta=0,\quad x\in\omega,\\
	\theta|_{\partial\omega}=0,\\
	\phi(0,\cdot)=u_0(\cdot),\quad\theta(0,\cdot)=\eta_0(\cdot).
	\end{cases}
	\end{equation}
The solution of \eqref{eq NSunforced} is denoted by
	\begin{equation*}
	\Phi(t,U_0)=(\phi(t),\theta(t)),\quad U_0=(u_0,\eta_0)\in \mathcal X.
	\end{equation*}

\begin{lemma}\label{lemma stable}
There exist constants $C,c>0$ such that for any $U_0\in \mathcal X$ and $t\geq0$, the solution $\Phi(t,U_0)$ of \eqref{eq NSunforced} satisfies
	\begin{equation}\label{eq global}
	\|\Phi(t,U_0)\|_{\mathcal X}^2\leq Ce^{-ct}\|U_0\|_{\mathcal X}^2.
	\end{equation}
\end{lemma}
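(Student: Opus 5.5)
The heat component decouples and can be treated first. Since $\theta(t)=e^{t\Delta_\omega}\eta_0$, the spectral theorem for the Dirichlet Laplacian on $\omega$ gives
\begin{equation*}
\|\theta(t)\|\leq e^{-\lambda_1t}\|\eta_0\|.
\end{equation*}
Consequently $\|\overline\theta(t)\|_{L^2(D)}$ decays exponentially, and so does $\int_t^\infty\|\theta(s)\|^2ds$.

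For the velocity, I will work with the vorticity $\varpi=\curl\phi$. By the div--curl estimate \eqref{eq d-c} with $q=2$, $\|\phi\|_{\mathcal H^1}\leq C\|\varpi\|$, so it suffices to bound $\|\varpi(t)\|^2$. As in the proof of Lemma~\ref{NS estimate}, taking the curl of the equation gives
\begin{equation*}
\partial_t\varpi+\phi\cdot\nabla\varpi-\nu\Delta\varpi=\curl\overline\theta,\qquad\varpi|_{\partial D}=0.
\end{equation*}
The Navier-slip condition $\curl\phi=0$ on $\partial D$ is exactly what makes $\varpi$ vanish on the boundary.

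Multiplying by $\varpi$ kills the transport term, because $\dvg\phi=0$ and $\phi\cdot n=0$. One subtlety is that $\overline\theta$ is only $L^2$ in $D$; it may jump across $\partial\omega$ since $\theta\in H_0^1(\omega)$, though its zero extension is in fact $H^1$. In any case I will avoid differentiating $\theta$ and integrate by parts on the forcing term instead, as in the irreducibility proof:
\begin{equation*}
\langle\curl\overline\theta,\varpi\rangle=\int_D\overline\theta\cdot\nabla^\perp\varpi\,dx\leq\frac\nu2\|\nabla\varpi\|^2+C\|\theta\|^2.
\end{equation*}
This uses $\varpi|_{\partial D}=0$. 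By the Poincaré inequality $\|\nabla\varpi\|^2\geq c_P\|\varpi\|^2$, which yields
\begin{equation*}
\frac{d}{dt}\|\varpi\|^2+\nu c_P\|\varpi\|^2\leq C e^{-2\lambda_1t}\|\eta_0\|^2.
\end{equation*}

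Gronwall's inequality with $c<\min\{\nu c_P,2\lambda_1\}$ then gives
\begin{equation*}
\|\varpi(t)\|^2\leq e^{-\nu c_Pt}\|\curl u_0\|^2+Ce^{-ct}\|\eta_0\|^2.
\end{equation*}
Combining this with the heat bound and the equivalence of norms gives \eqref{eq global}.

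The only delicate point is justifying the energy identity for weak solutions with $\mathcal H^1$ data and the vanishing of the boundary terms. I would handle this by running the computation for Galerkin or smooth approximations, where the regularity $u\in L^2\mathcal H^2$ from Proposition~\ref{prop well-posed} suffices, and then passing to the limit.
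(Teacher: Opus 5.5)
Your proposal is correct and is essentially the paper's argument: the same vorticity energy estimate with the forcing term $\langle\curl\overline\theta,\varpi\rangle$ bounded by $\frac\nu2\|\nabla\varpi\|^2+C\|\theta\|^2$, followed by Poincar\'e on $D$ and on $\omega$. The only difference is in how you close the estimate. You insert the explicit heat decay $\|\theta(t)\|\le e^{-\lambda_1 t}\|\eta_0\|$ into Gronwall, correctly taking $c<\min\{\nu c_P,2\lambda_1\}$ to handle the resonant case. The paper instead adds $K$ times the heat energy inequality to the vorticity inequality and closes with the single Lyapunov functional $\|\varpi\|^2+K\|\theta\|^2$, which is the form it reuses later for the stochastic Lyapunov estimates.
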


\begin{proof}
The constant $C$ below may vary from line to line, but is independent of $U_0$ and $t$. Taking the $L^2(\omega)$ inner product of the heat equation in \eqref{eq NSunforced} with $\theta$, we obtain
	\begin{equation*}
	\frac{1}{2}\partial_t\|\theta\|^2+\|\nabla\theta\|^2=0.
	\end{equation*}
Using the Poincare inequality on $\omega$, there exists $\lambda_\omega>0$ such that
	\begin{equation}\label{eq theta}
	\partial_t\|\theta\|^2+2\lambda_\omega\|\theta\|^2\leq0,\quad\|\theta_t\|^2\leq e^{-2\lambda_\omega t}\|\eta_0\|^2.
	\end{equation}

\vspace{0.3em}

We next estimate the velocity. Put $\varpi=\curl\phi$. Taking curl in the velocity equation gives
	\begin{equation*}
	\partial_t\varpi+\phi\cdot\nabla\varpi-\nu\Delta\varpi=\curl\overline{\theta},\quad\varpi|_{\partial D}=0,\quad\varpi(0)=\curl u_0.
	\end{equation*}
Multiplying by $\varpi$, we get
	\begin{align*}
	\frac{1}{2}\partial_t\|\varpi\|^2+\nu\|\nabla\varpi\|^2=\langle\curl\overline{\theta},\varpi\rangle\leq\frac{\nu}{2}\|\nabla\varpi\|^2+C\|\theta\|^2.
	\end{align*}
By the Poincare inequality for $\varpi\in H_0^1(D)$, there exist constants $C_1,c_1>0$ such that
	\begin{equation}\label{eq curl}
	\partial_t\|\varpi\|^2+c_1\|\varpi\|^2\leq C_1\|\theta\|^2.
	\end{equation}

\vspace{0.3em}

Finally we combine \eqref{eq theta} and \eqref{eq curl}. Let
	\begin{equation*}
	\mathcal{V}(t):=\|\varpi_t\|^2+K\|\theta_t\|^2,
	\end{equation*}
which is equivalent to the norm $\|\cdot\|_{\mathcal X}$, and the parameter $K\geq1$ is specified below. Multiplying \eqref{eq theta} by $K$ and adding it to \eqref{eq curl}, we have
	\begin{equation*}
	\mathcal{V}'(t)+c_1\|\varpi\|^2+(2K\lambda_\omega-C_1)\|\theta\|^2\leq0.
	\end{equation*}
We now take $K$ sufficiently large such that $2K\lambda_\omega-C_1>0$ and set $c=\min\{c_1,2\lambda_\omega-CK^{-1}\}$. It then follows that
	\begin{equation*}
	\mathcal{V}'(t)+c\mathcal{V}(t)\leq0,
	\end{equation*}
which implies the estimate \eqref{eq global}.
\end{proof}

    \subsection{Linearized Navier--Stokes estimates}

    This appendix collects two deterministic estimates for linearized Navier--Stokes equations. Let $T>0$, and let $g$ satisfy \eqref{eq g} with time interval replaced by $(0,T)$. 

    We first consider the backward linearized equation
	\begin{equation}\label{eq l1}
	\begin{cases}
	-\partial_t\psi-\nu\Delta\psi-(g\cdot\nabla)\psi+(\nabla g)^{\rm T}\psi+\nabla\rho=f(t,x),\quad x\in D,\;t\in(0,T),\\
	\dvg\psi=0,\\
	\psi\cdot n=0,\quad\curl\psi=0,\quad x\in\partial D,\\
	\psi(T,\cdot)=0.
	\end{cases}
	\end{equation}

    \begin{lemma}\label{lemma l1}
    For every $T>0$, there exists a constant $C>0$ such that for any $f\in L^2(0,T;L^2(D;\R^2))$, the solution $\psi$ of \eqref{eq l1} satisfies
	\begin{align*}
	\|\psi\|_{L^\infty(0,T;\mathcal H^1)}+\|\psi\|_{L^2(0,T;\mathcal H^2)}+\|\partial_t\psi\|_{L^2(0,T;\mathcal H)}\leq C\exp\left(C\|g\|_{L^\infty((0,T)\times D)}^2\right)\|f\|_{L^2(0,T;L^2(D))}.
	\end{align*}
    \end{lemma}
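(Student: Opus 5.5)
The plan is to reverse time, $\tilde\psi(s)=\psi(T-s)$, so that \eqref{eq l1} becomes a forward Oseen-type problem with zero initial datum. We then prove three energy estimates in order: $L^2$, $\mathcal H^1$, and maximal regularity. All constants depend only on $T$, $\nu$, $D$, except through $G_0:=\|g\|_{L^\infty((0,T)\times D)}$.

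\emph{$L^2$ estimate.} Testing with $\psi$ and using the identity from the proof of Proposition~\ref{prop Carleman}, namely $-(g\cdot\nabla)\psi+(\nabla g)^{\rm T}\psi=-D\psi\,g+\nabla(g\cdot\psi)$, the pressure-like gradient drops against the divergence-free $\psi$. The remaining term $\int D\psi\,g\cdot\psi$ is bounded by $\frac\nu4\|\nabla\psi\|^2+CG_0^2\|\psi\|^2$. The slip conditions give $\int-\Delta\psi\cdot\psi=\|\curl\psi\|^2$, and \eqref{eq d-c} controls $\|\nabla\psi\|$ by $\|\curl\psi\|$. Gronwall then gives
\[
\|\psi\|_{L^\infty L^2}^2+\|\psi\|_{L^2\mathcal H^1}^2\leq Ce^{CG_0^2}\|f\|_{L^2L^2}^2 .
\]

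\emph{$\mathcal H^1$ and $\mathcal H^2$ estimates.} Test with $A\psi=-\nu\Pi\Delta\psi$, which is admissible since $A$ is self-adjoint with the slip conditions. Two pieces follow directly: $\langle-\partial_t\psi,A\psi\rangle=-\frac{\nu}{2}\frac{d}{dt}\|\curl\psi\|^2$, and $\|A\psi\|\gtrsim\|\psi\|_{\mathcal H^2}-C\|\psi\|$ by elliptic regularity for the slip Stokes operator. The lower-order terms must be estimated without derivatives of $g$, because only $g\in L^\infty$ is available here. For this reason I do not expand $(\nabla g)^{\rm T}\psi$ directly. Instead I write it as $-D\psi\,g+\nabla(g\cdot\psi)+(g\cdot\nabla)\psi$. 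After the Leray projection the gradient part vanishes, so the whole lower-order term equals $\Pi(-D\psi\, g)$, whose norm is at most $2G_0\|\nabla\psi\|$. Young's inequality then gives
\[
|\langle \Pi(D\psi\,g),A\psi\rangle|\leq\tfrac14\|A\psi\|^2+CG_0^2\|\nabla\psi\|^2,
\]
and $|\langle f,A\psi\rangle|\leq\frac14\|A\psi\|^2+\|f\|^2$. Integrating backward from $\psi(T)=0$ and inserting the $L^2$ bound from the first step gives the $L^\infty\mathcal H^1$ and $L^2\mathcal H^2$ bounds. The resulting constant is $C(1+G_0^2)e^{CG_0^2}$, which can be absorbed into $Ce^{CG_0^2}$.

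\emph{Time derivative.} From the equation, $\partial_t\psi=\Pi(-\nu\Delta\psi+D\psi\,g-f)$, so
\[
\|\partial_t\psi\|_{L^2L^2}\leq C\|\psi\|_{L^2\mathcal H^2}+G_0\|\nabla\psi\|_{L^2L^2}+\|f\|_{L^2L^2},
\]
and the previous estimates conclude.

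\emph{Main obstacle.} The delicate point is justifying the computations at the available regularity: Galerkin approximation in eigenfunctions of $A$ (or regularizing $g$), then passing to the limit. The rewriting that removes $\nabla g$ is what keeps the constant in the form $e^{CG_0^2}$, and it depends on the Leray projection commuting properly with the slip boundary conditions.
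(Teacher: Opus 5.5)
Your proposal is correct and follows essentially the paper's argument. Both use the same time reversal, the same rewriting $-(g\cdot\nabla)\psi+(\nabla g)^{\rm T}\psi=-D\psi\,g+\nabla(g\cdot\psi)$ (so only $\|g\|_{L^\infty}$ enters), Gronwall, and the slip Stokes elliptic estimate. The only difference is the multiplier: the paper tests with $\partial_t\psi$, obtaining the $L^\infty(0,T;\mathcal H^1)$ and $\partial_t\psi$ bounds together, and then gets $L^2(0,T;\mathcal H^2)$ by elliptic regularity. You instead test with $A\psi$ and read off $\partial_t\psi$ from the equation; given the div-curl bound $\|\nabla\psi\|\le C\|\curl\psi\|$, your preliminary $L^2$ step is not actually needed.
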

    \begin{proof} It suffices to consider the forward system 
	\begin{equation*}
	\begin{cases}
	\partial_t\psi-\nu\Delta\psi-(g\cdot\nabla)\psi+(\nabla g)^{\rm T}\psi+\nabla\rho=f(t,x),\quad x\in D,\;t\in(0,T),\\
	\dvg\psi=0,\\
	\psi\cdot n=0,\quad\curl\psi=0,\quad x\in\partial D,\\
	\psi(0)=0.
	\end{cases}
	\end{equation*}
    We use the identity
	\begin{equation*}
	-(g\cdot\nabla)\psi+(\nabla g)^{\rm T}\psi=-\left(\nabla\psi+(\nabla\psi)^{\rm T}\right)g+\nabla(g\cdot\psi).
	\end{equation*}
After absorbing the gradient into the pressure, the equation can be written as
	\begin{equation*}
	\partial_t\psi-\nu\Delta\psi+\nabla\rho=f+\left(\nabla\psi+(\nabla\psi)^{\rm T}\right)g.
	\end{equation*}
Taking the $L^2(D)$ inner product with $\partial_t\psi$, the pressure term vanishes, and the slip boundary condition gives
	\begin{align*}
	\|\partial_t\psi\|^2+\frac{\nu}{2}\partial_t\|\curl\psi\|^2&=\langle f,\partial_t\psi\rangle+\left\langle\left(\nabla\psi+(\nabla\psi)^{\rm T}\right)g,\partial_t\psi\right\rangle.
	\end{align*}
By the div-curl estimate, $\|\psi\|_{\mathcal H^1}\leq C\|\curl\psi\|$. Hence
	\begin{align*}
	\|\partial_t\psi\|^2+\frac{\nu}{2}\partial_t\|\curl\psi\|^2&\leq\|f\|\|\partial_t\psi\|+C\|g\|_{L^\infty((0,T)\times D)}\|\psi\|_{\mathcal H^1}\|\partial_t\psi\|\\
	&\leq\frac12\|\partial_t\psi\|^2+C\|f\|^2+C\|g\|_{L^\infty((0,T)\times D)}^2\|\psi\|_{\mathcal H^1}^2.
	\end{align*}
    Since $\psi(0)=0$, by Gronwall's inequality, we compute that 
	\begin{align*}
	\|\psi\|_{L^\infty(0,T;\mathcal H^1)}+\|\partial_t\psi\|_{L^2(0,T;\mathcal H)}\leq C\exp\left(C\|g\|_{L^\infty((0,T)\times D)}^2\right)\|f\|_{L^2(0,T;L^2(D))}.
	\end{align*}
    Finally, the slip Stokes elliptic estimate applied to the equation above gives
	\begin{align*}
	\|\psi\|_{L^2(0,T;\mathcal H^2)}&\leq C\|f\|_{L^2(0,T;L^2(D))}+C\|\partial_t\psi\|_{L^2(0,T;\mathcal H)}+C\|g\|_{L^\infty((0,T)\times D)}\|\psi\|_{L^2(0,T;\mathcal H^1)}\\
	&\leq C\exp\left(C\|g\|_{L^\infty((0,T)\times D)}^2\right)\|f\|_{L^2(0,T;L^2(D))}.
	\end{align*}
    
    Thus the proof of Lemma \ref{lemma l1} is completed by 
	\begin{align*}	\|\psi\|_{L^\infty(0,T;\mathcal H^1)}+\|\psi\|_{L^2(0,T;\mathcal H^2)}+\|\partial_t\psi\|_{L^2(0,T;\mathcal H)}\leq C\exp\left(C\|g\|_{L^\infty((0,T)\times D)}^2\right)\|f\|_{L^2(0,T;L^2(D))}.
	\end{align*}
    \end{proof}

\vspace{0.6em}

We also consider the forward linearized equation
	\begin{equation}\label{eq l2}
	\begin{cases}
	\partial_tv-\nu\Delta v+(g\cdot\nabla)v+(v\cdot\nabla)g+\nabla\pi=f(t,x),\quad x\in D,\;t\in(0,T),\\
	\dvg v=0,\\
	v\cdot n=0,\quad\curl v=0,\quad x\in\partial D,\\
	v(0,\cdot)=0.
	\end{cases}
	\end{equation}

\begin{lemma}\label{lemma l2}
For every $T>0$, there exists a constant $C>0$ such that for any $f\in L^2(0,T;L^2(D;\R^2))$, the solution $v$ of \eqref{eq l2} satisfies
	\begin{align*}
	\|v\|_{L^\infty(0,T;\mathcal H^1)}\leq C\exp\left(C\left(\|g\|_{L^\infty((0,T)\times D)}^2+\|\partial_tg\|_{L^2(0,T;L^4(D))}\right)\right)\|f\|_{L^2(0,T;L^2(D))}.
	\end{align*}
\end{lemma}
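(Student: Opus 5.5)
The approach is an $H^1$ energy estimate obtained through the vorticity formulation. The key point is that the $\partial_t g$ dependence enters only through a time-derivative-type bound. Energy estimates tested against $\partial_t v$ (as in Lemma~\ref{lemma l1}) would give $\|v\|_{L^\infty(0,T;\mathcal H^1)}$ with a factor $\exp(C\|g\|_{L^\infty}^2)$. Testing against $\partial_t v$ is delicate here, however, because the term $(v\cdot\nabla)g$ involves $\nabla g$, on which there is no $L^\infty$ bound. I would therefore rewrite the nonlinear terms so that no derivative falls on $g$ except where it can be integrated by parts in time.

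\textbf{Step 1: rewrite the equation.} Since $\dvg v=\dvg g=0$, write
\[
(g\cdot\nabla)v+(v\cdot\nabla)g=\dvg(g\otimes v+v\otimes g).
\]
Take the $L^2(D)$ inner product of \eqref{eq l2} with $\partial_t v$ (or with $Av$; in the slip setting $\langle -\Pi\Delta v,\partial_tv\rangle=\tfrac12\partial_t\|\curl v\|^2$). This gives
\[
\|\partial_tv\|^2+\tfrac{\nu}{2}\partial_t\|\curl v\|^2=\langle f,\partial_tv\rangle-\langle(g\cdot\nabla)v,\partial_tv\rangle-\langle(v\cdot\nabla)g,\partial_tv\rangle .
\]
The first two terms on the right are bounded by $\tfrac14\|\partial_tv\|^2+C\|f\|^2+CG_0^2\|v\|_{\mathcal H^1}^2$, using the div--curl estimate \eqref{eq d-c}.

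\textbf{Step 2: the term with $\nabla g$.} For $\langle(v\cdot\nabla)g,\partial_tv\rangle$, integrate by parts in space to get $-\langle g,(v\cdot\nabla)^{*}\partial_t v\rangle$ type terms; these would put a derivative on $\partial_t v$, which is worse. Instead, I integrate by parts in time:
\[
\int_0^t\langle(v\cdot\nabla)g,\partial_tv\rangle\,ds=\Big[\langle(v\cdot\nabla)g,v\rangle\Big]_0^t-\int_0^t\langle(\partial_tv\cdot\nabla)g,v\rangle+\langle(v\cdot\nabla)\partial_tg,v\rangle\,ds .
\]
Each term is then moved off $g$ by spatial integration by parts, using $\dvg v=0$ and $v\cdot n=0$:
\begin{itemize}
\item $\langle(v\cdot\nabla)g,v\rangle=-\langle(v\cdot\nabla)v,g\rangle$, bounded by $G_0\|v\|\,\|\nabla v\|$, which is absorbed into $\tfrac{\nu}{4}\|\curl v(t)\|^2+CG_0^2\|v(t)\|^2$.
\item $\langle(v\cdot\nabla)\partial_tg,v\rangle=-\langle(v\cdot\nabla)v,\partial_tg\rangle$, bounded by $\|\partial_tg\|_{L^4}\|v\|_{L^4}\|\nabla v\|\le C\|\partial_tg\|_{L^4}\|v\|_{\mathcal H^1}^2$.
\item $\langle(\partial_tv\cdot\nabla)g,v\rangle=-\langle(\partial_tv\cdot\nabla)v,g\rangle$, since $\dvg\partial_tv=0$. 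It is bounded by $G_0\|\partial_tv\|\,\|\nabla v\|_{L^\infty}$. This is too strong, so I instead use $\|\nabla v\|_{L^4}\le C\|v\|_{\mathcal H^1}^{1/2}\|v\|_{\mathcal H^2}^{1/2}$ and $G_0\in L^\infty$, which gives $G_0\|\partial_tv\|_{L^{4/3}}$; that fails as well.
\end{itemize}
The fallback for the third term is the identity $\langle(\partial_tv\cdot\nabla)g,v\rangle+\langle(v\cdot\nabla)g,\partial_tv\rangle$, i.e.\ symmetrize. Then, with $\mathcal D g=\nabla g+(\nabla g)^{\rm T}$, we have $\partial_t\langle(v\cdot\nabla)g,v\rangle=\langle\mathcal D g\, v,\partial_tv\rangle+\langle(v\cdot\nabla)\partial_tg,v\rangle$. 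Handling this symmetric part is the main obstacle.

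\textbf{Step 3 (main obstacle and alternative route).} If the time integration by parts is too lossy, I would work with the vorticity $r=\curl v$, which satisfies
\[
\partial_t r-\nu\Delta r+g\cdot\nabla r+v\cdot\nabla\curl g=\curl f,\qquad r|_{\partial D}=0,
\]
with $\curl f$ understood in $H^{-1}$. The term $v\cdot\nabla\curl g=\dvg(v\,\curl g)$ can be tested against $r$: integrating by parts gives $|\langle v\curl g,\nabla r\rangle|$. This requires $\curl g\in L^2_tL^4_x$, which is not available directly; it would come via $H^2$ parabolic regularity of $g$ combined with $\partial_tg\in L^2L^4$. So I expect the true argument to test the equation against $-\Pi\Delta v$ (equivalently, to control $\|\nabla r\|$), writing $(v\cdot\nabla)g$ in divergence form $\dvg(v\otimes g)$ so that $|\langle \dvg(v\otimes g),\Delta v\rangle|$ is handled after one further integration by parts plus the $\partial_tg$ time-integration trick above.

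\textbf{Step 4: close the estimate.} In every variant the estimate closes by Gronwall. The integrated coefficient is $CG_0^2+C\|\partial_tg(s)\|_{L^4}$, so its time integral is at most $CG_0^2T+C\sqrt T\,\|\partial_tg\|_{L^2L^4}$. Together with $v(0)=0$, this yields the factor $\exp(C(G_0^2+G_1))$ multiplying $\|f\|_{L^2L^2}$.
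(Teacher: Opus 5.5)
Your Step~2 is in fact the paper's proof. You abandon it because of a wrong estimate, and the route you fall back on in Step~3 does not work, so as written the proposal leaves its declared ``main obstacle'' unresolved.

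The term you reject is harmless. Since $\dvg\partial_tv=0$ and $\partial_tv\cdot n=0$, H\"older with exponents $(2,2,\infty)$ gives
\begin{equation*}
|\langle(\partial_tv\cdot\nabla)g,v\rangle|=|\langle(\partial_tv\cdot\nabla)v,g\rangle|\leq G_0\|\partial_tv\|\,\|\nabla v\|\leq\tfrac14\|\partial_tv\|^2+CG_0^2\|v\|_{\mathcal H^1}^2,
\end{equation*}
and the first piece is absorbed by the $\|\partial_tv\|^2$ on the left. No $\|\nabla v\|_{L^\infty}$ and no symmetrization are needed. The point of the time integration by parts is that it trades $\langle(v\cdot\nabla)g,\partial_tv\rangle=-\langle(v\cdot\nabla)\partial_tv,g\rangle$, with the derivative on $\partial_tv$, for $-\langle(\partial_tv\cdot\nabla)v,g\rangle$, with the derivative on $v$. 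The price is the endpoint term $\langle(v\cdot\nabla)g,v\rangle(t)$ and the $\partial_tg$ term, which you already bounded correctly.

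The paper packages exactly this into the modified energy $\mathcal E(t)=\frac{\nu}{2}\|\curl v\|^2-\int_D(v\cdot\nabla)v\cdot g\,dx+K_0\|v\|^2$ with $K_0=C_0(1+G_0^2)$. When $\mathcal E$ is differentiated and combined with the equation tested by $\partial_tv$, the bad term cancels identically.

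You should add the $K_0\|v\|^2$ part explicitly. Your endpoint term leaves $CG_0^2\|v(t)\|^2$, which cannot be absorbed into $\|\curl v(t)\|^2$ when $G_0$ is large. You therefore need an $L^2$ bound, either from the basic energy estimate or from the paper's device with $2K_0|\langle v,\partial_tv\rangle|\leq\frac14\|\partial_tv\|^2+CK_0^2\|v\|^2\leq\frac14\|\partial_tv\|^2+CK_0\mathcal E$. Gronwall with coefficient $C(1+G_0^2+\|\partial_tg(t)\|_{L^4})$ then closes exactly as in your Step~4.

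Step~3 should be dropped, for two reasons.
\begin{enumerate}
\item The field $g$ is arbitrary subject to \eqref{eq g} and solves no equation. There is no parabolic $H^2$ regularity to invoke, and $\curl g$ need not lie in $L^2_tL^4_x$; it need not even be a function. The constant must also depend on $g$ only through $G_0$ and $G_1$, since this is what feeds $\mathcal C_g$ in Propositions~\ref{prop control2}--\ref{prop control3} and the random-trajectory argument.
\item Testing against $-\Pi\Delta v$ is worse still. The term $\langle(v\cdot\nabla)g,\Delta v\rangle$ either needs $\nabla g$ or, after moving the derivative off $g$, needs $\nabla\Delta v$. There is also no $\partial_tv$ available to trade by integrating in time.
\end{enumerate}
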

\begin{proof} The weak form of \eqref{eq l2} is given by
	\begin{align*}
	\langle\partial_tv,w\rangle+\nu\int_D\curl v\,\curl wdx+\int_D(g\cdot\nabla)v\cdot wdx-\int_D(v\cdot\nabla)w\cdot gdx=\langle f,w\rangle.
	\end{align*}
We define
	\begin{equation*}
	\mathcal E(t)=\frac{\nu}{2}\|\curl v_t\|^2-\int_D(v_t\cdot\nabla)v_t\cdot g_tdx+K_0\|v_t\|^2,
	\end{equation*}
where $K_0:=C_0\left(1+\|g\|_{L^\infty((0,T)\times D)}^2\right)$ with a sufficiently large constant $C_0>0$. Note that the mixed term satisfies
	\begin{align*}
	\left|\int_D(v\cdot\nabla)v\cdot gdx\right|\leq\|g\|_{L^\infty((0,T)\times D)}\|v\|\|\nabla v\|\leq\frac{\nu}{4}\|\curl v\|^2+C\|g\|_{L^\infty((0,T)\times D)}^2\|v\|^2,
	\end{align*}
Hence, by the choice of $K_0$, there exists a constant $C_1>0$ such that
	\begin{equation*}
	C_1^{-1}\left(\|\curl v_t\|^2+K_0\|v_t\|^2\right)\leq\mathcal E(t)\leq C_1\left(\|\curl v_t\|^2+K_0\|v_t\|^2\right).
	\end{equation*}
    Taking $w=\partial_tv$ in the weak form gives
	\begin{align*}
	\|\partial_tv\|^2+\frac{\nu}{2}\partial_t\|\curl v\|^2+\int_D(g\cdot\nabla)v\cdot\partial_tvdx-\int_D(v\cdot\nabla)\partial_tv\cdot gdx=\langle f,\partial_tv\rangle.
	\end{align*}
    Combining this with the definition of $\mathcal E$, it follows that
	\begin{align*}
	\mathcal E'(t)+\|\partial_tv\|^2&=\langle f,\partial_tv\rangle-\int_D(g\cdot\nabla)v\cdot\partial_tvdx-\int_D(\partial_tv\cdot\nabla)v\cdot gdx\\
	&\quad-\int_D(v\cdot\nabla)v\cdot\partial_tgdx+2K_0\langle v,\partial_tv\rangle.
	\end{align*}
    Using the 2D div-curl estimate and the Sobolev embedding $H^1(D)\hookrightarrow L^4(D)$, one has
	\begin{align*}	\left|\int_D(g\cdot\nabla)v\cdot\partial_tvdx\right|+\left|\int_D(\partial_tv\cdot\nabla)v\cdot gdx\right|&\leq C\|g\|_{L^\infty((0,T)\times D)}\|v\|_{\mathcal H^1}\|\partial_tv\|,\\
	\left|\int_D(v\cdot\nabla)v\cdot\partial_tgdx\right|&\leq C\|\partial_tg(t)\|_{L^4(D)}\|v\|_{\mathcal H^1}^2.
	\end{align*}
Moreover,
	\begin{equation*}
	|\langle f,\partial_tv\rangle|+2K_0|\langle v,\partial_tv\rangle|\leq\frac14\|\partial_tv\|^2+C\|f\|^2+CK_0^2\|v\|^2.
	\end{equation*}
    
    Using again the equivalence between $\mathcal E$ and $\|v\|_{\mathcal H^1}^2$, and the definition of $K_0$, we obtain
	\begin{equation*}
	\mathcal E'(t)+\frac12\|\partial_tv\|^2\leq C\left(1+\|g\|_{L^\infty((0,T)\times D)}^2+\|\partial_tg(t)\|_{L^4(D)}\right)\mathcal E(t)+C\|f\|^2.
	\end{equation*}
    Gronwall's inequality then implies that
    \begin{align*}
	\sup_{0\leq t\leq T}\mathcal E(t)\leq C\exp\left(C\|g\|_{L^\infty((0,T)\times D)}^2+C\|\partial_tg\|_{L^1(0,T;L^4(D))}\right)\|f\|_{L^2(0,T;L^2(D))}^2.
	\end{align*}

    Combining this, the lower bound for $\mathcal E(t)$, and $\|\partial_tg\|_{L^1(0,T;L^4(D))}\leq C\|\partial_tg\|_{L^2(0,T;L^4(D))}$, we complete the proof of Lemma \ref{lemma l2}.
    \end{proof}

    \subsection{Stochastic and Jacobian estimates}

    This appendix collects some estimates for the stochastic  equation and its Jacobian flow.

    \begin{lemma}\label{lemma L bdd}
    For the equation given by \eqref{eq NS},\eqref{eq eta}, assume that the sequence $\{b_j\}_{j\in\N^+}$ satisfies $\sum_{j\in\N^+}b_j^2<\infty$. Then there exist constants $c,C,\gamma_0>0$ such that the solution $U_t=U(t,U_0)=(u_t,\eta_t)$ satisfies the following estimates.  
    \begin{itemize}
        \item [(1)] For any $\gamma\in(0,\gamma_0]$, $U_0\in \mathcal X$ and $t\geq0$, 
	\begin{align}
	\E\exp\left(\gamma\|U_t\|_{\mathcal X}^2\right)\leq C\exp\left(C\gamma e^{-ct}\|U_0\|_{\mathcal X}^2\right).\label{eq L2 Ubdd}
	\end{align}
        \item [(2)] For any $\gamma\in(0,\gamma_0]$, $U_0\in \mathcal X$ and $n\in\N$, 
	\begin{align}
	\E\exp\left(\gamma\sum_{0\leq k\leq n}\|U_k\|_{\mathcal X}^2\right)\leq\exp\left(C\gamma\|U_0\|_{\mathcal X}^2\right)\exp\left(Cn\right).\label{eq L2 Ubdd3}
	\end{align}
        \item [(3)]  For any $\gamma\in(0,\gamma_0]$, $U_0\in \mathcal X$ and $n\in\N$, 
	\begin{align}
	\E\left(\exp\left(\gamma\|u\|_{L^\infty((n+1,n+2)\times D)}^2\right)|\mathcal{F}_{n}\right)&\leq C\exp\left(C\gamma\|U_{n}\|_{\mathcal X}^2\right),\label{eq L-inf bdd}\\
	\E\left(\exp\left(\gamma\|u\|_{L^2(n,n+2;\mathcal H^2)}^2\right)|\mathcal{F}_{n}\right)&\leq C\exp\left(C\gamma\|U_{n}\|_{\mathcal X}^2\right).\label{eq L2H2 bdd}
	\end{align}
        \item [(4)]  For any $m\in\N^+$, there exists a constant $C_m>0$ such that for any $U_0\in \mathcal X$ and $n\in\N$, 
	\begin{equation}\label{eq L bdd}
	\E\left(\|\partial_tu\|_{L^2(n+1,n+2;L^4(D))}^m|\mathcal{F}_{n}\right)\leq C_m(1+\|U_{n}\|_{\mathcal X}^{2m}).
	\end{equation}
    \end{itemize}    
\end{lemma}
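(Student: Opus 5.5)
The plan is to work throughout with the equivalent norm $\|U\|_{\widehat{\mathcal X}}^2=\|\curl u\|^2+K\|\eta\|^2$ and to apply Itô's formula to it. Write $\varpi=\curl u$. It solves $\partial_t\varpi+u\cdot\nabla\varpi-\nu\Delta\varpi=\curl\overline\eta$ with $\varpi|_{\partial D}=0$. Testing against $\varpi$ kills the transport term, since $u\cdot n=0$. Integrating the forcing by parts as $|\langle\curl\overline\eta,\varpi\rangle|\le\frac\nu2\|\nabla\varpi\|^2+C\|\eta\|^2$ leaves no contribution from the boundary of $\omega$ because $\eta\in H^1_0(\omega)$. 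Itô's formula for $\|\eta\|^2$ gives
\begin{equation*}
d\|\eta\|^2+2\|\nabla\eta\|^2dt=\textstyle\sum_j b_j^2\,dt+2\langle\eta,\Lambda dW\rangle .
\end{equation*}
Choosing $K$ large, exactly as in Lemma~\ref{lemma stable}, and using Poincaré on both pieces should give
\begin{equation*}
d\|U\|_{\widehat{\mathcal X}}^2+c_0\big(\|U\|_{\widehat{\mathcal X}}^2+\mathcal Q(U)\big)dt\le C\,dt+dM_t,\qquad d\langle M\rangle_t\le 4K^2B_0\|\eta\|^2dt .
\end{equation*}
Part (1) then follows from the standard exponential martingale argument (\cite[Lemma A.1]{Mattingly-02b}). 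Multiply by $e^{ct}$ and use $\E\exp(\gamma M_t-\frac{\gamma^2}{2}\langle M\rangle_t)\le1$. The quadratic variation can be absorbed into the dissipation once $\gamma\le\gamma_0$ with $\gamma_0C\le c_0/2$. Part (2) is the usual iteration of the one-step bound via the Markov property, conditioning successively on $\mathcal F_{n-1},\mathcal F_{n-2},\dots$. Because $e^{-c}<1$, the weights stay geometrically summable and produce the $e^{Cn}$ factor.

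For part (3), I would use the same martingale inequality integrated over $[n,n+2]$ rather than discounted. This gives
\begin{equation*}
\E\Big(\exp\big(\gamma\textstyle\int_n^{n+2}\|\nabla\curl u\|^2ds\big)\,\Big|\,\mathcal F_n\Big)\le C e^{C\gamma\|U_n\|^2_{\mathcal X}},
\end{equation*}
and \eqref{eq L2H2 bdd} follows by the div--curl estimate \eqref{eq d-c} together with elliptic regularity. The sup-norm bound \eqref{eq L-inf bdd} is pathwise. Apply Lemma~\ref{NS estimate}, estimate \eqref{eq u2}, on $[n,n+2]$ with forcing $f=\overline\eta$, which gives $\|u\|_{L^\infty((n+1,n+2)\times D)}\le C(\|u_n\|_{\mathcal H^1}+\|\eta\|_{L^2(n,n+2;H^1_0(\omega))})$. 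Here the zero extension of an $H^1_0(\omega)$ function lies in $H^1(D)$, so the lemma applies. Squaring and exponentiating then reduces the claim to exponential moments of $\int_n^{n+2}\|\nabla\eta\|^2ds$, which the $\eta$-part of the same Itô inequality controls. The only bookkeeping is to shrink $\gamma_0$ by the constant in \eqref{eq u2}. Part (4) works the same way through \eqref{eq u3}: the bound $\|\partial_tu\|_{L^2(n+1,n+2;L^4)}\le C(1+\|u_n\|^2_{\mathcal H^1}+\|\eta\|^2_{L^2(n,n+2;H^1)})$ holds pathwise, and all polynomial moments of $\int\|\nabla\eta\|^2$ are dominated by the exponential moments just obtained. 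Taking the $m$-th power gives $C_m(1+\|U_n\|^{2m})$.

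I expect the main obstacle to be the closing of the Itô inequality with a Lyapunov structure that holds uniformly in $\gamma\le\gamma_0$. Two points need care. First, the quadratic variation of $M$ involves only $\eta$, so it must be absorbed by $K\|\nabla\eta\|^2$ and not by the vorticity dissipation; this is what fixes the relation between $K$, $B_0$ and $\gamma_0$. Second, the conditional versions require the Markov property at integer times, so every constant must be independent of $n$. This holds because the noise is time-homogeneous.
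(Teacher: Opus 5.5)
Your proposal follows the paper's proof closely. It uses the same functional $\|\curl u\|^2+K\|\eta\|^2$ with an exponential-martingale bound for (1). It uses the same geometric iteration for (2), in the $\widehat{\mathcal X}$-norm where the one-step factor is exactly $e^{-c}$. For the $L^\infty$ bound in (3) it makes the same pathwise reduction via Lemma~\ref{NS estimate} to exponential moments of $X:=\|\eta\|_{L^2(n,n+2;H^1(\omega))}^2$. There are two departures. First, you read \eqref{eq L2H2 bdd} directly off the integrated dissipation $\int_n^{n+2}\|\nabla\curl u\|^2ds$ rather than going through \eqref{eq u1}; this is equivalent and slightly more direct. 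Second, in (4) you replace the paper's Burkholder--Davis--Gundy argument by the claim that polynomial moments are dominated by exponential ones.

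That second step, as written, does not give the stated estimate. For fixed $\gamma$, the inequality $x^m\le C_{m,\gamma}e^{\gamma x}$ combined with $\E(e^{\gamma X}|\mathcal F_n)\le Ce^{C\gamma\|U_n\|_{\mathcal X}^2}$ only yields $\E(X^m|\mathcal F_n)\le C_m\exp(C\gamma\|U_n\|_{\mathcal X}^2)$. This is exponential in $\|U_n\|_{\mathcal X}$, not the claimed $C_m(1+\|U_n\|_{\mathcal X}^{2m})$, so you need one more step. Either of the following works:
\begin{itemize}
\item Note that your exponential bound holds for every $\gamma\in(0,\gamma_0]$ with a constant independent of $\gamma$. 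Take $\gamma=\gamma_0/(1+\|U_n\|_{\mathcal X}^2)$, which is legitimate by the Markov property at time $n$. Then $X^m\le m!\,\gamma^{-m}e^{\gamma X}$ gives $\E(X^m|\mathcal F_n)\le C_m(1+\|U_n\|_{\mathcal X}^{2})^m$.
\item Argue as the paper does. From the It\^o identity, bound $X\le C(1+\|\eta_n\|^2+M_{n,n+2})$. Estimate $\E(|M_{n,n+2}|^m|\mathcal F_n)$ by BDG using $\langle M_n\rangle_{n+2}\le CX$, then absorb $\E(X^{m/2}|\mathcal F_n)$ by Young's inequality.
\end{itemize}
With either fix the argument is complete.
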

\begin{proof}
For ease of notation, we use $C\geq1$ to denote generic constants that may vary from line to line below, which are independent of $U_0,t,n$. Set $B_0=\sum_{j\in\N^+}b_j^2$. The constant $\gamma_0>0$ will be chosen sufficiently small in the proof.

\vspace{0.6em}
    
\noindent {\it Point (1).} Applying the Ito formula to $\|\eta_t\|^2$, one has
	\begin{equation*}
	d\|\eta_t\|^2+2\|\nabla\eta_t\|^2dt=B_0dt+2\sum_{j\in\N^+}b_j\langle\eta_t,\psi_j\rangle d\beta_j(t).
	\end{equation*}
By the Poincare inequality on $\omega$, this gives
	\begin{equation*}
	d\|\eta_t\|^2\leq-c\|\eta_t\|^2dt+Cdt+2\sum_{j\in\N^+}b_j\langle\eta_t,\psi_j\rangle d\beta_j(t).
	\end{equation*}
On the other hand, let $\varpi_t=\curl u_t$. The vorticity equation reads
	\begin{equation*}
	\partial_t\varpi+u\cdot\nabla\varpi-\nu\Delta\varpi=\curl\overline{\eta},\quad\varpi|_{\partial D}=0,\quad\varpi_0=\curl u_0.
	\end{equation*}
Multiplying by $\varpi_t$, we obtain
	\begin{align*}
	\frac{1}{2}\partial_t\|\varpi_t\|^2+\nu\|\nabla\varpi_t\|^2=\langle\curl\overline{\eta}_t,\varpi_t\rangle\leq\frac{\nu}{2}\|\nabla\varpi_t\|^2+C\|\eta_t\|^2.
	\end{align*}
By the Poincare inequality for $\varpi_t\in H_0^1(D)$, this gives
	\begin{equation*}
	\partial_t\|\varpi_t\|^2\leq-c\|\varpi_t\|^2+C\|\eta_t\|^2.
	\end{equation*}
We now take $K\geq1$ sufficiently large and define functional
	\begin{equation*}
	\mathcal{V}(U_t):=\|\varpi_t\|^2+K\|\eta_t\|^2,
	\end{equation*}
which is equivalent to the $\|\cdot\|_{\mathcal X}$-norm. Combining the two inequalities above, there exists a constant $c_0>0$ such that
	\begin{equation*}
	d\mathcal{V}(U_t)\leq-c_0\mathcal{V}(U_t)dt+Cdt+dM_t,\quad M_t=2K\sum_{j\in\N^+}b_j\int_0^t\langle\eta_s,\psi_j\rangle d\beta_j(s).
	\end{equation*}
Moreover, there exists a constant $C_K>0$ such that
	\begin{equation*}
	d\langle M\rangle_t=4K^2\sum_{j\in\N^+}b_j^2\langle\eta_t,\psi_j\rangle^2dt\leq C_K\mathcal{V}(U_t)dt.
	\end{equation*}
Set $c=c_0/2$ and choose $\gamma_1>0$ so that
	\begin{equation*}
	\gamma_1\leq\frac{c_0}{2C_K}.
	\end{equation*}
Then, for every $\gamma\in(0,\gamma_1]$, we derive that 
	\begin{align*}
	\mathcal{V}(U_t)-e^{-ct}\mathcal{V}(U_0)-C\leq\int_0^te^{-c(t-s)}dM_s-\gamma\int_0^te^{-c(t-s)}d\langle M\rangle_s.
	\end{align*}
Thus, by \cite[Lemma A.1]{Mattingly-02b}, for any $R>0$,
	\begin{equation*}
	\Pb\left(\mathcal{V}(U_t)-e^{-ct}\mathcal{V}(U_0)-C\geq R\right)\leq e^{-2\gamma R}.
	\end{equation*}
In view of the fact that a random variable $X$ satisfies $\E X\leq2$ provided $\Pb(X\geq R)\leq R^{-2}$ for any $R\geq0$, this implies
	\begin{equation*}
	\E\exp\left(\gamma\mathcal{V}(U_t)\right)\leq C\exp\left(\gamma e^{-ct}\mathcal{V}(U_0)\right),
	\end{equation*}
which thus ensures \eqref{eq L2 Ubdd}.

\vspace{0.6em}
    
\noindent {\it Point (2).} By Point (1), for any $\gamma>0$ sufficiently small, there exists $C_1>0$ such that
	\begin{equation*}
	\E\left(\exp\left(\gamma\mathcal{V}(U_k)\right)|\mathcal{F}_{k-1}\right)\leq C_1\exp\left(\gamma e^{-c}\mathcal{V}(U_{k-1})\right).
	\end{equation*}
Therefore,
	\begin{align*}
	\E\exp\left(\gamma\sum_{0\leq k\leq n}\mathcal{V}(U_k)\right)&=\E\left(\exp\left(\gamma\sum_{0\leq k\leq n-1}\mathcal{V}(U_k)\right)\E\left(e^{\gamma\mathcal{V}(U_n)}|\mathcal{F}_{n-1}\right)\right)\\
	&\leq C_1\E\exp\left(\gamma\sum_{0\leq k\leq n-1}\mathcal{V}(U_k)+\gamma e^{-c}\mathcal{V}(U_{n-1})\right).
	\end{align*}
Applying this procedure repeatedly, one has
	\begin{align*}
	\E\exp\left(\gamma\sum_{0\leq k\leq n}\mathcal{V}(U_k)\right)\leq C_1^n\exp\left(c_1\gamma\mathcal{V}(U_0)\right),
	\end{align*}
where
	\begin{equation*}
	c_1:=\sum_{k\in\N}e^{-ck}=(1-e^{-c})^{-1},
	\end{equation*}
provided that $0<\gamma\leq\gamma_2:=c_1^{-1}\gamma_1$.  As $C^{-1}\|U\|_{\mathcal X}^2\leq\mathcal{V}(U)\leq C\|U\|_{\mathcal X}^2$, estimate \eqref{eq L2 Ubdd3} follows.

\vspace{0.6em}

Before proving Points (3),(4), we collect two estimates for the localized force on $[n,n+2]$. Since $\eta_t\in H_0^1(\omega;\R^2)$ and $\overline{\eta}_t$ is the zero extension of $\eta_t$, we have
	\begin{equation}\label{eq eta11}
	\|\overline{\eta}\|_{L^2(n,n+2;H^1(D))}=\|\eta\|_{L^2(n,n+2;H^1(\omega))}.
	\end{equation}
Using the Poincare inequality on $\omega$, we obtain
	\begin{align}\label{eq eta12}
	\|\eta\|_{L^2(n,n+2;H^1(\omega))}\leq C\int_{n}^{n+2}\|\nabla\eta_t\|^2dt\leq C\left(1+\|\eta_{n}\|^2+\sum_{j\in\N^+}b_j\int_{n}^{n+2}\langle\eta_t,\psi_j\rangle d\beta_j(t)\right).
	\end{align}
Denote the martingale in the right-hand side by $M_{n,t}$, $n\leq t\leq n+2$. Then one has
	\begin{align*}
	\langle M_n\rangle_{n+2}=\sum_{j\in\N^+}b_j^2\int_{n}^{n+2}\langle\eta_t,\psi_j\rangle^2dt\leq C\|\eta\|_{L^2(n,n+2;L^2(\omega))}^2\leq C\|\eta\|_{L^2(n,n+2;H^1(\omega))}^2.
	\end{align*}
By \eqref{eq eta12}, for any $\gamma>0$,
	\begin{equation*}
	\exp(\gamma\|\eta\|_{L^2(n,n+2;H^1(\omega))}^2)\leq C\exp\left(C\gamma\|\eta_n\|^2+C\gamma M_{n,n+2}\right).
	\end{equation*}
Since $\langle M_n\rangle_{n+2}\leq C\|\eta\|_{L^2(n,n+2;H^1(\omega))}^2$, choosing $\gamma_3>0$ sufficiently small, for any $\gamma\in(0,\gamma_3]$, 
	\begin{align*}
	&\E\left(\exp(\gamma\|\eta\|_{L^2(n,n+2;H^1(\omega))}^2)|\mathcal{F}_{n}\right)\\
	&\leq C\exp\left(C\gamma\|\eta_n\|^2\right)\E\left(e^{C\gamma M_{n,n+2}}|\mathcal{F}_{n}\right)\\
	&\leq C\exp\left(C\gamma\|\eta_n\|^2\right)\E\left(e^{2C\gamma M_{n,n+2}-2C^2\gamma^2\langle M_n\rangle_{n+2}}|\mathcal{F}_{n}\right)^{1/2}\E\left(e^{2C^2\gamma^2\langle M_n\rangle_{n+2}}|\mathcal{F}_{n}\right)^{1/2}\\
	&\leq C\exp\left(C\gamma\|\eta_n\|^2\right)\E\left(\exp(C\gamma^2\|\eta\|_{L^2(n,n+2;H^1(\omega))}^2|\mathcal{F}_{n}\right)^{1/2}\\
	&\leq C\exp\left(C\gamma\|\eta_n\|^2\right)\E\left(\exp(\gamma\|\eta\|_{L^2(n,n+2;H^1(\omega))}^2|\mathcal{F}_{n}\right)^{1/2}.
	\end{align*}
which implies
	\begin{equation}\label{eq eta13}
	\E\left(\exp(\gamma\|\eta\|_{L^2(n,n+2;H^1(\omega))}^2|\mathcal{F}_{n}\right)\leq C\exp\left(C\gamma\|\eta_{n}\|^2\right).
	\end{equation}
On the other hand, by \eqref{eq eta12} and the Burkholder--Davis--Gundy inequality, for any $m\in\N^+$,
	\begin{align*}
	\E\left(\|\eta\|_{L^2(n,n+2;H^1(\omega))}^{2m}|\mathcal{F}_{n}\right)&\leq C_m\left(1+\|\eta_{n}\|^{2m}\right)+C_m\E\left(\left(\sum_{j\in\N^+}b_j^2\int_{n}^{n+2}\langle\eta_t,\psi_j\rangle^2dt\right)^{m/2}|\mathcal{F}_{n}\right)\\
	&\leq C_m\left(1+\|\eta_{n}\|^{2m}\right)+C_m\E\left(\|\eta\|_{L^2(n,n+2;H^1(\omega))}^m|\mathcal{F}_{n}\right).
	\end{align*}
Using Young's inequality to absorb the last term into the left-hand side, we get
	\begin{equation}\label{eq eta14}
	\E\left(\|\eta\|_{L^2(n,n+2;H^1(\omega))}^{2m}|\mathcal{F}_{n}\right)\leq C_m\left(1+\|\eta_{n}\|^{2m}\right).
	\end{equation}

\vspace{0.6em}

\noindent {\it Point (3).} Applying Lemma \ref{NS estimate}, there exists a deterministic constant $C>0$ such that
	\begin{equation*}
	\|u\|_{L^\infty((n+1,n+2)\times D)}+\|u\|_{L^2(n,n+2;\mathcal H^2)}\leq C\left(\|u_{n}\|_{\mathcal H^1}+\|\overline{\eta}\|_{L^2(n,n+2;H^1(D))}\right)\quad\forall\,n\in\N.
	\end{equation*}
Combining this with \eqref{eq eta11} and \eqref{eq eta13}, we obtain
	\begin{align*}
	\E\left(\exp\left(\gamma\|u\|_{L^\infty((n+1,n+2)\times D)}^2\right)|\mathcal{F}_{n}\right)&\leq\exp\left(C\gamma\|u_{n}\|_{\mathcal H^1}^2\right)\E\left(\exp\left(C\gamma\|\eta\|_{L^2(n,n+2;H^1(\omega))}^2\right)|\mathcal{F}_{n}\right)\\
	&\leq C\exp\left(c\gamma\|U_{n}\|_{\mathcal X}^2\right),
	\end{align*}
after reducing $\gamma_0$ if necessary. This proves \eqref{eq L-inf bdd}. The proof of \eqref{eq L2H2 bdd} is similar.

\vspace{0.6em}

\noindent {\it Point (4).} By Lemma \ref{NS estimate}, there exists a deterministic constant $C>0$ such that
	\begin{equation*}
	\|\partial_tu\|_{L^2(n+1,n+2;L^4(D))}\leq C\left(1+\|u_n\|_{\mathcal H^1}^2+\|\overline{\eta}\|_{L^2(n,n+2;H^1(D))}^2\right)\quad\forall\,n\in\N.
	\end{equation*}
Using \eqref{eq eta11} and \eqref{eq eta14}, we derive that
	\begin{align*}
	\E\left(\|\partial_tu\|_{L^2(n+1,n+2;L^4(D))}^m|\mathcal{F}_{n}\right)&\leq C_m\left(1+\|u_n\|_{\mathcal H^1}^{2m}\right)+C_m\E\left(\|\eta\|_{L^2(n,n+2;H^1(\omega))}^{2m}|\mathcal{F}_{n}\right)\\
	&\leq C_m(1+\|U_n\|_{\mathcal X}^{2m}).
	\end{align*}
This proves Point (4) and completes the proof of the lemma.

\end{proof}

\begin{lemma}\label{lemma J1}
For any $\gamma>0$, there exists a constant $C_\gamma>0$ such that for any $t\geq s\geq0$ and $V_s\in \mathcal X$, the solution $(v,p)=\mathcal J_{s,\cdot}V_s$ of \eqref{eq J} satisfies
	\begin{align}\label{eq J1}
	\sup_{s\leq r\leq t}\|\mathcal J_{s,r}V_s\|_{\mathcal X}^2+\int_s^t\left(\|v_r\|_{\mathcal H^2}^2+\|\nabla p_r\|^2\right)dr\leq C_\gamma\exp\left(\gamma\int_s^t\|u_r\|_{\mathcal H^2}^2dr+C_\gamma(t-s)\right)\|V_s\|_{\mathcal X}^2.
	\end{align}
\end{lemma}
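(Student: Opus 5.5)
The plan is to work with the vorticity formulation, in the same way as Lemma~\ref{NS estimate} and Lemma~\ref{lemma stable}, and to combine an energy estimate with Gronwall's inequality.

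First, the heat component $p$ of \eqref{eq J} is autonomous. Testing $\partial_rp-\Delta_\omega p=0$ with $p$ gives
\begin{equation*}
\|p_r\|^2+2\int_s^r\|\nabla p_\tau\|^2d\tau\le\|p_s\|^2 .
\end{equation*}
This already bounds the $p$-part of the left-hand side of \eqref{eq J1}.

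For the velocity, set $\sigma=\curl v$. Taking the curl of \eqref{eq J}, as in the proof of Proposition~\ref{prop irreducibility}, gives
\begin{equation*}
\partial_r\sigma-\nu\Delta\sigma+u\cdot\nabla\sigma+v\cdot\nabla\varpi=\curl\overline p,\qquad \sigma|_{\partial D}=0,
\end{equation*}
where $\varpi=\curl u$. Testing with $\sigma$, the transport term vanishes because $\dvg u=0$ and $u\cdot n=0$. The forcing is handled by integrating by parts, $|\langle\curl\overline p,\sigma\rangle|\le\frac\nu4\|\nabla\sigma\|^2+C\|p\|^2$, exactly as in Lemma~\ref{lemma stable}.

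The main term is $\int_D(v\cdot\nabla\varpi)\sigma\,dx$. I will bound it by $\|\nabla\varpi\|\,\|v\|_{L^4}\|\sigma\|_{L^4}$. The div-curl estimate \eqref{eq d-c} and Ladyzhenskaya's inequality then give
\begin{equation*}
\Bigl|\int_D(v\cdot\nabla\varpi)\sigma\,dx\Bigr|\le C\|u\|_{\mathcal H^2}\|\sigma\|^{3/2}\|\nabla\sigma\|^{1/2}\le\frac\nu4\|\nabla\sigma\|^2+C\|u\|_{\mathcal H^2}^{4/3}\|\sigma\|^2 .
\end{equation*}
To reach the claimed form $\exp(\gamma\int\|u\|_{\mathcal H^2}^2)$ with arbitrary $\gamma$, I apply Young's inequality once more:
\begin{equation*}
C\|u\|_{\mathcal H^2}^{4/3}\le\gamma\|u\|_{\mathcal H^2}^2+C_\gamma .
\end{equation*}
This step is where the exponent structure must be handled with care, and I expect it to be the main point. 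The sub-quadratic power $4/3$ is exactly what allows an arbitrarily small $\gamma$ at the cost of a constant $C_\gamma$ per unit time.

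Set $\mathcal V_r=\|\sigma_r\|^2+K\|p_r\|^2$, with $K$ large enough to absorb $C\|p\|^2$ through the dissipation $2K\|\nabla p\|^2$ and the Poincaré inequality. This gives
\begin{equation*}
\mathcal V'+\nu\|\nabla\sigma\|^2+K\|\nabla p\|^2\le\bigl(\gamma\|u\|_{\mathcal H^2}^2+C_\gamma\bigr)\mathcal V .
\end{equation*}
Gronwall's inequality yields the bound on $\sup_r\mathcal V_r$ together with $\int(\|\nabla\sigma\|^2+\|\nabla p\|^2)$, after multiplying through by the exponential factor.

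Finally, $\|v\|_{\mathcal H^2}\le C(\|\nabla\sigma\|+\|\sigma\|)$ by the elliptic estimate for the stream function with $\sigma|_{\partial D}=0$. Together with $\mathcal V\simeq\|V\|_{\mathcal X}^2$ from \eqref{eq d-c}, this gives \eqref{eq J1} after renaming constants. Rigor at the level of weak solutions can be obtained by Galerkin approximation or by the regularity in \eqref{eq NS3}; this part is routine.
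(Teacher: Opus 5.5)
Your proposal is correct and follows essentially the same route as the paper. The shared steps are the vorticity energy estimate, the bound $C\|u\|_{\mathcal H^2}\|\sigma\|^{3/2}\|\nabla\sigma\|^{1/2}$ for the stretching term followed by Young's inequality to produce $\gamma\|u\|_{\mathcal H^2}^2+C_\gamma$, Gronwall's inequality, and the elliptic recovery of $\|v\|_{\mathcal H^2}$ from $\|\nabla\sigma\|+\|\sigma\|$. The only cosmetic difference is that you weight the heat part by a large $K$ and invoke Poincar\'e, whereas the paper simply absorbs $C\|p\|^2$ into the term $C_\gamma(\|\phi\|^2+\|p\|^2)$ on the right-hand side.
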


\begin{proof}
Let $\phi=\curl v$ and $\varpi=\curl u$. The heat component satisfies
	\begin{equation*}
	\frac{1}{2}\partial_r\|p\|^2+\|\nabla p\|^2=0.
	\end{equation*}
Taking curl in the velocity equation in \eqref{eq J}, we obtain
	\begin{equation*}
	\partial_r\phi+u\cdot\nabla\phi+v\cdot\nabla\varpi-\nu\Delta\phi=\curl\overline p,\quad\phi|_{\partial D}=0,\quad\phi(s)=\curl v_s.
	\end{equation*}
Multiplying by $\phi$ gives
	\begin{align*}
	\frac{1}{2}\partial_r\|\phi\|^2+\nu\|\nabla\phi\|^2&=-\int_D(v\cdot\nabla\varpi)\phi dx+\langle\curl\overline p,\phi\rangle\leq-\int_D(v\cdot\nabla\varpi)\phi dx+\frac{\nu}{4}\|\nabla\phi\|^2+C\|p\|^2.
	\end{align*}
Using div-curl estimate and the Gagliardo--Nirenberg inequality, one has
	\begin{align*}
	\left|\int_D(v\cdot\nabla\varpi)\phi dx\right|&\leq C\|v\|_{L^4(D)}\|\nabla\varpi\|\|\phi\|_{L^4(D)}\leq C\|u\|_{\mathcal H^2}\|\phi\|^{3/2}\|\nabla\phi\|^{1/2}\\
	&\leq\frac{\nu}{4}\|\nabla\phi\|^2+\left(\gamma\|u\|_{\mathcal H^2}^2+C_\gamma\right)\|\phi\|^2.
	\end{align*}
Combining the last estimates with the heat energy identity, we get
	\begin{equation*}
	\partial_r\left(\|\phi\|^2+\|p\|^2\right)+c\|\nabla\phi\|^2+\|\nabla p\|^2\leq\left(\gamma\|u\|_{\mathcal H^2}^2+C_\gamma\right)\left(\|\phi\|^2+\|p\|^2\right).
	\end{equation*}
By Gronwall's inequality, it follows that
	\begin{align*}
	&\sup_{s\leq r\leq t}\left(\|\phi_r\|^2+\|p_r\|^2\right)+\int_s^t\left(\|\nabla\phi_r\|^2+\|\nabla p_r\|^2\right)dr\\
	&\quad\leq C_\gamma\exp\left(\gamma\int_s^t\|u_r\|_{\mathcal H^2}^2dr+C_\gamma(t-s)\right)\left(\|\phi_s\|^2+\|p_s\|^2\right).
	\end{align*}
Meanwhile, note that
	\begin{align*}
	\int_s^t\|v_r\|_{\mathcal H^2}^2dr&\leq C\int_s^t\|\nabla\phi_r\|^2dr+C(t-s)\sup_{s\leq r\leq t}\|\phi_r\|^2,\\
	\|\mathcal J_{s,r}V_s\|_{\mathcal X}^2&\leq C\left(\|\phi_r\|^2+\|p_r\|^2\right),\quad\|\phi_s\|^2+\|p_s\|^2\leq C\|V_s\|_{\mathcal X}^2.
	\end{align*}
   This completes the proof of \eqref{eq J1}.
\end{proof}

\vspace{0.6em}

We also consider the equation solved by the second derivative of the flow. Let $V_r^i\in \mathcal X$, $i=1,2$, and write $(v^{(i)},p^{(i)})=\mathcal J_{s,\cdot}V_s^i$ for $i=1,2$. Then $\mathcal J^{(2)}_{s,t}(V_s^1,V_s^2)=(w_t,0)$, where $w$ solves
	\begin{equation}\label{eq J-2}
	\begin{cases}
	\partial_tw-\nu\Delta w+(u\cdot\nabla)w+(w\cdot\nabla)u+\nabla\pi^{(2)}=-(v^{(1)}\cdot\nabla)v^{(2)}-(v^{(2)}\cdot\nabla)v^{(1)},\quad x\in D,\;t>s,\\
	\dvg w=0,\\
	w\cdot n=0,\quad\curl w=0,\quad x\in\partial D,\\
	w(s,\cdot)=0.
	\end{cases}
	\end{equation}

\begin{lemma}
For every $\gamma>0$, there exists a constant $C_\gamma>0$ such that for almost every realization of the noise, the bilinear map $\mathcal J^{(2)}$ defined by \eqref{eq J-2} satisfies
	\begin{equation}\label{eq J2}
	\|\mathcal J^{(2)}_{s,t}\|_{\mathcal L(\mathcal X\times \mathcal X;\mathcal X)}\leq C_\gamma\exp\left(\gamma\int_s^t\|u_r\|_{\mathcal H^2}^2dr+C_\gamma(t-s)\right),\quad t\geq s\geq0.
	\end{equation}
\end{lemma}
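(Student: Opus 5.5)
We treat \eqref{eq J-2} as a forced linearized Navier--Stokes equation with zero initial datum and run the same vorticity energy argument as in Lemma~\ref{lemma J1}; the only new ingredient is the bound on the quadratic forcing. Set $\phi=\curl w$ and $\varpi=\curl u$. Taking the curl of \eqref{eq J-2} gives
\begin{equation*}
\partial_t\phi+u\cdot\nabla\phi+w\cdot\nabla\varpi-\nu\Delta\phi=-\curl F,\quad F:=(v^{(1)}\cdot\nabla)v^{(2)}+(v^{(2)}\cdot\nabla)v^{(1)},
\end{equation*}
with $\phi|_{\partial D}=0$ and $\phi(s)=0$. Multiplying by $\phi$ and integrating by parts in the forcing term yields $|\langle\curl F,\phi\rangle|\leq\frac{\nu}{4}\|\nabla\phi\|^2+C\|F\|^2$. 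The term $\int_D(w\cdot\nabla\varpi)\phi\,dx$ is handled exactly as in Lemma~\ref{lemma J1}, so that
\begin{equation*}
\partial_t\|\phi\|^2+c\|\nabla\phi\|^2\leq\left(\gamma\|u\|_{\mathcal H^2}^2+C_\gamma\right)\|\phi\|^2+C\|F\|^2 .
\end{equation*}

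Next we bound the forcing. By Ladyzhenskaya's inequality and the div-curl estimate \eqref{eq d-c},
\begin{equation*}
\|F\|\leq C\|v^{(1)}\|_{L^4}\|\nabla v^{(2)}\|_{L^4}+C\|v^{(2)}\|_{L^4}\|\nabla v^{(1)}\|_{L^4}\leq C\|v^{(1)}\|_{\mathcal H^1}\|v^{(2)}\|_{\mathcal H^2}+C\|v^{(2)}\|_{\mathcal H^1}\|v^{(1)}\|_{\mathcal H^2},
\end{equation*}
and therefore
\begin{equation*}
\int_s^t\|F\|^2dr\leq C\sup_r\|v^{(1)}_r\|_{\mathcal H^1}^2\int_s^t\|v^{(2)}_r\|_{\mathcal H^2}^2dr+(1\leftrightarrow2).
\end{equation*}
Both factors are controlled by \eqref{eq J1} applied to $V^1_s$ and $V^2_s$ with parameter $\gamma/3$. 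This gives
\begin{equation*}
\int_s^t\|F\|^2\leq C_\gamma\exp\left(\tfrac{2\gamma}{3}\int_s^t\|u\|_{\mathcal H^2}^2+C_\gamma(t-s)\right)\|V^1_s\|_{\mathcal X}^2\|V^2_s\|_{\mathcal X}^2 .
\end{equation*}

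Since $\phi(s)=0$, Gronwall's inequality applied to the energy inequality, run with parameter $\gamma/3$ in the $w$-estimate, gives
\begin{equation*}
\|\phi_t\|^2\leq C\exp\left(\tfrac{\gamma}{3}\int_s^t\|u\|_{\mathcal H^2}^2+C_\gamma(t-s)\right)\int_s^t\|F\|^2 .
\end{equation*}
Combining this with the previous display and using $\|w_t\|_{\mathcal H^1}\leq C\|\phi_t\|$, we obtain \eqref{eq J2} for $\|\mathcal J^{(2)}_{s,t}\|^2$; replacing $\gamma$ by $\gamma/2$ then yields the stated form, with the square root absorbed into the constants. The second component of $\mathcal J^{(2)}_{s,t}$ vanishes because the heat equation is linear, so no estimate is needed for it.

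The main point requiring care is the product estimate on $F$. It needs $L^2_t\mathcal H^2$ control of the tangent flows, which \eqref{eq J1} provides, and the exponents in $\gamma$ must be split so that the three exponential factors combine to at most $\exp(\gamma\int\|u\|_{\mathcal H^2}^2)$. One should also justify the energy identity for weak solutions, for example through a Galerkin approximation. Everything holds pathwise, i.e. for almost every noise realization along which $u\in L^2_{loc}\mathcal H^2$ (Proposition~\ref{prop well-posed}).
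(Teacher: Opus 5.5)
Your proposal is correct and follows essentially the paper's route. Both arguments use the vorticity energy estimate with $\phi(s)=0$, treat the $w\cdot\nabla\varpi$ term as in Lemma~\ref{lemma J1}, bound the quadratic source by the $L^\infty_t\mathcal H^1$ and $L^2_t\mathcal H^2$ tangent-flow bounds of \eqref{eq J1} with $\gamma$ split among the factors, and finish with Gronwall and the div--curl estimate. The only cosmetic difference is in the forcing term: you bound $|\langle\curl F,\phi\rangle|\leq\|F\|\|\nabla\phi\|$ and estimate $\|F\|$ by an $L^4$--$L^4$ H\"older bound. The paper instead writes the forcing as $v^{(1)}\cdot\nabla\phi^{(2)}+v^{(2)}\cdot\nabla\phi^{(1)}$, integrates by parts onto $\curl w$, and applies Agmon's inequality.
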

\begin{proof}
Let $\varpi=\curl u$, $\phi^{(i)}=\curl v^{(i)}$ and $\vartheta=\curl w$. Taking curl in \eqref{eq J-2}, we have
	\begin{equation*}
	\partial_r\vartheta-\nu\Delta\vartheta+u\cdot\nabla\vartheta+w\cdot\nabla\varpi=-v^{(1)}\cdot\nabla\phi^{(2)}-v^{(2)}\cdot\nabla\phi^{(1)},\quad\vartheta|_{\partial D}=0,\quad\vartheta(s)=0.
	\end{equation*}
Multiplying by $\vartheta$, we obtain
	\begin{align*}
	\frac{1}{2}\partial_r\|\vartheta\|^2+\nu\|\nabla\vartheta\|^2&\leq\left|\int_D(w\cdot\nabla\varpi)\vartheta dx\right|+\left|\int_D(v^{(1)}\cdot\nabla\phi^{(2)})\vartheta dx\right|+\left|\int_D(v^{(2)}\cdot\nabla\phi^{(1)})\vartheta dx\right|.
	\end{align*}
The first term is bounded by
	\begin{equation*}
	\left|\int_D(w\cdot\nabla\varpi)\vartheta dx\right|\leq\frac{\nu}{8}\|\nabla\vartheta\|^2+\left(\frac{\gamma}{4}\|u\|_{\mathcal H^2}^2+C_\gamma\right)\|\vartheta\|^2.
	\end{equation*}
Using $\dvg v^{(1)}=0$ and $v^{(1)}\cdot n=0$,
	\begin{align*}
	\left|\int_D(v^{(1)}\cdot\nabla\phi^{(2)})\vartheta dx\right|&=\left|\int_D(v^{(1)}\cdot\nabla\vartheta)\phi^{(2)}dx\right|\leq C\|v^{(1)}\|_{\mathcal H^1}^{1/2}\|v^{(1)}\|_{\mathcal H^2}^{1/2}\|v^{(2)}\|_{\mathcal H^1}\|\nabla\vartheta\|\\
	&\leq\frac{\nu}{8}\|\nabla\vartheta\|^2+C\|v^{(1)}\|_{\mathcal H^1}\|v^{(1)}\|_{\mathcal H^2}\|v^{(2)}\|_{\mathcal H^1}^2.
	\end{align*}
    The term with $v^{(2)}\cdot\nabla\phi^{(1)}$ is treated in the same way. Hence
	\begin{align*}
	\partial_r\|\vartheta\|^2+\nu\|\nabla\vartheta\|^2&\leq\left(\gamma\|u\|_{\mathcal H^2}^2+C_\gamma\right)\|\vartheta\|^2+C\|v^{(1)}\|_{\mathcal H^1}\|v^{(1)}\|_{\mathcal H^2}\|v^{(2)}\|_{\mathcal H^1}^2\\
	&\quad+C\|v^{(2)}\|_{\mathcal H^1}\|v^{(2)}\|_{\mathcal H^2}\|v^{(1)}\|_{\mathcal H^1}^2.
	\end{align*}
    
    Let us use \eqref{eq J1} with $\gamma$ replaced by $\gamma/8$ and set
	\begin{equation*}
	Q_{s,t}:=C_\gamma\exp\left(\tfrac{\gamma}{8}\|u\|_{L^2(s,t;\mathcal H^2)}^2+C_\gamma(t-s)\right).
	\end{equation*}
    Then, for $i=1,2$,
	\begin{equation*}
	\|v^{(i)}\|_{L^\infty(s,t;\mathcal H^1)}^2+\|v^{(i)}\|_{L^2(s,t;\mathcal H^2)}^2\leq Q_{s,t}\|V_s^i\|_{\mathcal X}^2.
	\end{equation*}
    
    Consequently,
	\begin{align*}
	\int_s^t\|v^{(1)}_r\|_{\mathcal H^1}\|v^{(1)}_r\|_{\mathcal H^2}\|v^{(2)}_r\|_{\mathcal H^1}^2dr&\leq(t-s)^{1/2}\|v^{(1)}\|_{L^\infty(s,t;\mathcal H^1)}\|v^{(1)}\|_{L^2(s,t;\mathcal H^2)}\|v^{(2)}\|_{L^\infty(s,t;\mathcal H^1)}^2\\
	&\leq C_\gamma Q_{s,t}^2\|V_s^1\|_{\mathcal X}^2\|V_s^2\|_{\mathcal X}^2,
	\end{align*}
    where the factor $(t-s)^{1/2}$ is absorbed into $C_\gamma e^{C_\gamma(t-s)}$. The symmetric term satisfies the same bound. Since $\vartheta(s)=0$, Gronwall's inequality gives
	\begin{equation*}
	\|\vartheta_t\|^2\leq C_\gamma\exp\left(\frac{\gamma}{4}\int_s^t\|u_r\|_{\mathcal H^2}^2dr+C_\gamma(t-s)\right)Q_{s,t}^2\|V_s^1\|_{\mathcal X}^2\|V_s^2\|_{\mathcal X}^2.
	\end{equation*}
    Using the div-curl estimate for $w$ and enlarging $C_\gamma$, we obtain
	\begin{equation*}
	\|w_t\|_{\mathcal H^1}\leq C_\gamma\exp\left(\gamma\int_s^t\|u_r\|_{\mathcal H^2}^2dr+C_\gamma(t-s)\right)\|V_s^1\|_{\mathcal X}\|V_s^2\|_{\mathcal X}.
	\end{equation*}
    Taking the supremum over $\|V_s^1\|_{\mathcal X}\leq1$ and $\|V_s^2\|_{\mathcal X}\leq1$ gives \eqref{eq J2}.
    \end{proof}

    \subsection{Verification of the asymptotic strong Feller}\label{Sec D}

    This appendix proves the estimates used in Section \ref{Sec 4.1}. Throughout this subsection $B_0>0$ is fixed, and the constants may depend on $B_0$ but are independent of $n,N$ and $U_0$. To begin with, recall that $\mathcal{D}$ denotes the Malliavin derivative operator. By the Riesz representation theorem, $\mathcal{D}$ can be formulated as
	\begin{equation*}
	\langle\mathcal{D}F,v\rangle_{H}=\sum_{j\in\N^+}\int_{\R^+}\mathcal{D}_t^jF\langle v(t),\psi_j\rangle dt\quad\text{for }F\in\mathbb D^{1,2}(\mathcal X),\;v\in H.
	\end{equation*}
    In particular, we have 
	\begin{equation*}
	\mathcal{D}_s^jU_t=\mathcal{J}_{s,t}(0,b_j\psi_j),\quad\forall\,0<s<t,\;j\in\N^+.
	\end{equation*}
    Additionally, for any $F\in\mathcal{F}_{s}$,
	\begin{equation*}
	\mathcal{D}_tF=0\quad a.s.\quad\forall\,t>s.
	\end{equation*}

 \subsubsection{ Proof of inequality \eqref{eq rho2} }  
    We first show that, under the assumptions of Lemma \ref{lemma ASF1}, for any $\gamma,\varepsilon>0$, there exist constants $\bar{\gamma}=\bar{\gamma}(\varepsilon,\gamma)>0$, $N\in\N^+$ and $\beta>0$ such that
	\begin{equation}\label{eq ASF11}
	\E\left(e^{\bar{\gamma}\|U_{2n}\|_{\mathcal X}^2}\|\rho_{2n}\|_{\mathcal X}^2\right)\leq\varepsilon^n\exp\left(\gamma\|U_0\|_{\mathcal X}^2\right)\|V_0\|_{\mathcal X}^2\quad\forall\,U_0,V_0\in \mathcal X,\;n\in\N.
	\end{equation}

    \vspace{0.3em}

    By homogeneity, we assume $\rho_0=V_0$ with $\|V_0\|_{\mathcal X}=1$. Let $\gamma>0$ and $\varepsilon\in(0,1)$ be given. We choose $\bar{\gamma}>0$ and $\bar{\varepsilon}>0$ later. Define
	\begin{equation*}
	\Phi(x,y):=\begin{cases}
	x/y\quad&\text{for }y\neq0,\\
	0\quad&\text{for }y=0.
	\end{cases}
	\end{equation*}
    Invoking \eqref{eq ASF step}, there exists $N=N(\bar{\gamma},\bar{\varepsilon})$ and $\beta=\beta(\bar{\gamma},\bar{\varepsilon})$ such that
	\begin{equation}\label{eq ASF12}
	\E\left(\|\rho_{2n+2}\|_{\mathcal X}^4|\mathcal F_{2n}\right)\leq\bar{\varepsilon}e^{\bar{\gamma}\|U_{2n}\|_{\mathcal X}^2}\|\rho_{2n}\|_{\mathcal X}^4\quad\forall\,U_0\in \mathcal X,\;n\in\N.
	\end{equation}
    For each $k\in\N^+$, set
	\begin{equation*}
	F_k:=\Phi(\|\rho_{2k}\|_{\mathcal X},\|\rho_{2k-2}\|_{\mathcal X})^2\exp\left(-\tfrac{\bar{\gamma}}{2}\|U_{2k-2}\|_{\mathcal X}^2\right),\quad G_k:=\exp\left(\tfrac{\bar{\gamma}}{2}\|U_{2k-2}\|_{\mathcal X}^2\right).
	\end{equation*}
    Repeated use of conditional expectation with \eqref{eq ASF12} implies
	\begin{align*}
	\E\left(\prod_{1\leq k\leq n}F_k^2\right)&=\E\left(\E\left(\prod_{1\leq k\leq n}F_k^2|\mathcal F_{2n-2}\right)\right)=\E\left(\prod_{1\leq k\leq n-1}F_k^2\E\left(F_n^2|\mathcal F_{2n-2}\right)\right)\\
	&\leq\bar{\varepsilon}\E\left(\prod_{1\leq k\leq n-1}F_k^2\right)\leq\cdots\leq\bar{\varepsilon}^n.
	\end{align*}
    On the other hand, noting that for any $n\in\N$ and $t\geq2n$, $\mathbf{1}_{\{\|\rho_{2n}\|_{\mathcal X}=0\}}\rho_t=0$, we have
	\begin{equation*}
	\prod_{1\leq k\leq n}F_kG_k=\|\rho_{2n}\|_{\mathcal X}^2.
	\end{equation*}
    Consequently, using \eqref{eq L2 Ubdd3}, there exist constants $C_1,c_1,\gamma_0>0$ such that for any $\bar{\gamma}\in(0,\gamma_0]$,
	\begin{align*}
	\E\left(e^{\bar{\gamma}\|U_{2n}\|_{\mathcal X}^2}\|\rho_{2n}\|_{\mathcal X}^2\right)&\leq\left(\E\prod_{1\leq k\leq n}F_k^2\right)^{\frac{1}{2}}\left(\E e^{2\bar{\gamma}\|U_{2n}\|_{\mathcal X}^2+\bar{\gamma}\sum_{1\leq k\leq n}\|U_{2k-2}\|_{\mathcal X}^2}\right)^{\frac{1}{2}}\\
	&\leq\bar{\varepsilon}^{n/2}\exp\left(c_1\bar{\gamma}\|U_0\|_{\mathcal X}^2\right)\exp\left(C_1n\right).
	\end{align*}
    Therefore, the desired inequality \eqref{eq ASF11} follows by taking
	\begin{equation*}
	\bar{\varepsilon}=e^{-2C_1}\varepsilon^2,\quad\bar{\gamma}=(c_1+1)^{-1}(\gamma\wedge\gamma_0).
	\end{equation*}

    \vspace{0.6em}

    We now prove \eqref{eq rho2}. Let $\gamma,\alpha>0$ be arbitrarily given, and apply \eqref{eq ASF11} with $\varepsilon=e^{-4\alpha}$. In view of the definition of $v$ given by \eqref{eq v1},\eqref{eq v2}, it follows that for each $n\in\N$,
	\begin{equation*}
	\rho_t=\begin{cases}
	\mathcal J_{2n,t}\rho_{2n},\quad&2n\leq t\leq2n+1,\\
	\mathcal J_{2n+1,t}\rho_{2n+1}-\mathcal A_{2n+1,t,N}v_n,\quad&2n+1<t\leq2n+2.
	\end{cases}
	\end{equation*}
    Recalling that $K_n$ is given by \eqref{eq ASF Kn} and using spectral theorem, it follows that
	\begin{equation*}
	\|v_n\|_{H_{2n+1,2n+2}}=\|\mathcal A_n^*(\mathcal M_n+\beta)^{-1}\check{\mathcal J}_n\hat{\mathcal J}_n\rho_{2n}\|_{H_{2n+1,2n+2}}\leq\beta^{-1/2}K_n\|\rho_{2n}\|_{\mathcal X}.
	\end{equation*}
    Since $\|\mathcal A_{2n+1,t,N}\|\leq C(B_0)K_n$, we have
	\begin{equation*}
	\sup_{2n\leq t\leq2n+2}\|\rho_t\|_{\mathcal X}\leq C_\beta K_n^2\|\rho_{2n}\|_{\mathcal X}.
	\end{equation*}
    Using \eqref{eq ASF K2} with $m=4$ and $\kappa=\bar{\gamma}$, together with \eqref{eq ASF11}, we obtain
	\begin{align*}
	\E\|\rho_t\|_{\mathcal X}^2&\leq C_\beta\E\left(K_n^4\|\rho_{2n}\|_{\mathcal X}^2\right)\leq C_\beta\E\left(e^{\bar{\gamma}\|U_{2n}\|_{\mathcal X}^2}\|\rho_{2n}\|_{\mathcal X}^2\right)\leq Ce^{-4\alpha n}\exp\left(\gamma\|U_0\|_{\mathcal X}^2\right)\|V_0\|_{\mathcal X}^2.
	\end{align*}
    Noting that $e^{-4\alpha n}\leq Ce^{-\alpha t}$ for $t\in[2n,2n+2]$, this proves the desired inequality \eqref{eq rho2}.

\vspace{0.6em}

 \subsubsection{ Proof of inequality \eqref{eq v3} }
    By homogeneity, it suffices to consider the case $\|V_0\|_{\mathcal X}=1$. To begin with, by the spectral theorem, we have
	\begin{equation}\label{eq ASF13}
	\begin{aligned}
	\|\mathcal A_n^*(\mathcal M_n+\beta)^{-1/2}\|_{\mathcal L(\mathcal X;H_{2n+1,2n+2})}&\leq1,\\
	\|(\mathcal M_n+\beta)^{-1/2}\mathcal A_n\|_{\mathcal L(H_{2n+1,2n+2};\mathcal X)}&\leq1,\\
	\|(\mathcal M_n+\beta)^{-1/2}\|_{\mathcal L(\mathcal X)}&\leq\beta^{-1/2}.
	\end{aligned}
	\end{equation}
    Recall that $v$ is defined by \eqref{eq v1},\eqref{eq v2}. By the generalized It\^o isometry, for any $t>0$,
	\begin{align}\label{eq ASF14}
	\E\left|\int_0^tv(s)dW(s)\right|^2&=\E\|v\|_{H_{0,t}}^2+\E\int_0^t\int_0^t\operatorname{Tr}\left(\mathcal D_sv(r)\mathcal D_rv(s)\right)dsdr\notag\\
	&\leq\E\|v\|_{H}^2+\sum_{n\in\N}\sum_{j\in\N^+}\int_{2n+1}^{(2n+2)\wedge t}\int_{2n+1}^{(2n+2)\wedge t}\E\|\mathcal D_s^jv(r)\|_{L^2(\omega)}^2dsdr\notag\\
	&\leq\sum_{n\in\N}\E\|v_n\|_{H}^2+\sum_{n\in\N}\sum_{j\in\N^+}\int_{2n+1}^{2n+2}\E\|\mathcal D_s^jv_n\|_{H}^2ds.
	\end{align}
    Invoking \eqref{eq ASF13} and the definition of $K_n$, we compute that
	\begin{equation*}
	\begin{aligned}
	\|v_n\|_{H}=\|\mathcal A_n^*(\mathcal M_n+\beta)^{-1}\check{\mathcal J}_n\hat{\mathcal J}_n\rho_{2n}\|_{H}\leq\beta^{-1/2}\|\check{\mathcal J}_n\hat{\mathcal J}_n\rho_{2n}\|_{\mathcal X}\leq\beta^{-1/2}K_n\|\rho_{2n}\|_{\mathcal X},
	\end{aligned}
	\end{equation*}
    and therefore, using \eqref{eq ASF K2} with $m=2$ and $\kappa=\bar{\gamma}/2$,
	\begin{align}\label{eq ASF15}
	\sum_{n\in\N}\E\|v_n\|_{H}^2\leq C_\beta\sum_{n\in\N}\E\left(K_n^2\|\rho_{2n}\|_{\mathcal X}^2\right)\leq C_\beta\sum_{n\in\N}\E\left(e^{\frac{\bar{\gamma}}{2}\|U_{2n}\|_{\mathcal X}^2}\|\rho_{2n}\|_{\mathcal X}^2\right)\leq C\exp\left(\gamma\|U_0\|_{\mathcal X}^2\right).
	\end{align}

    \vspace{0.3em}

    It remains to estimate the Malliavin derivative of $v_n$. For $s\in(2n+1,2n+2)$, the random variable $\rho_{2n+1}=\hat{\mathcal J}_n\rho_{2n}$ is $\mathcal F_{2n+1}$-measurable, hence $\mathcal D_s^j\rho_{2n+1}=0$. By the product rule,
	\begin{align*}
	\mathcal D_s^jv_n&=\mathcal A_n^*(\mathcal M_n+\beta)^{-1}(\mathcal D_s^j\check{\mathcal J}_n)\rho_{2n+1}+(\mathcal D_s^j\mathcal A_n^*)(\mathcal M_n+\beta)^{-1}\check{\mathcal J}_n\rho_{2n+1}\\
	&\quad-\mathcal A_n^*(\mathcal M_n+\beta)^{-1}((\mathcal D_s^j\mathcal A_n)\mathcal A_n^*+\mathcal A_n(\mathcal D_s^j\mathcal A_n^*))(\mathcal M_n+\beta)^{-1}\check{\mathcal J}_n\rho_{2n+1}.
	\end{align*}
    We now estimate the terms appearing above. Fix $\theta>0$, to be chosen below, and set
	\begin{equation*}
	Q_n(\theta):=\exp\left(\theta\int_{2n}^{2n+2}\|u_r\|_{\mathcal H^2}^2dr\right).
	\end{equation*}
    By Lemma \ref{lemma J1}, one has $K_n\leq C_\theta Q_n(\theta)$. Moreover, for $r,s\in(2n+1,2n+2)$ and $V\in \mathcal X$,
	\begin{equation*}
	\mathcal D_s^j\mathcal J_{r,2n+2}V=\begin{cases}
	\mathcal J^{(2)}_{s,2n+2}\left((0,b_j\psi_j),\mathcal J_{r,s}V\right),\quad&r\leq s,\\
	\mathcal J^{(2)}_{r,2n+2}\left(\mathcal J_{s,r}(0,b_j\psi_j),V\right),\quad&s<r.
	\end{cases}
	\end{equation*}
    Using \eqref{eq J1} and \eqref{eq J2} on subintervals of $[2n,2n+2]$, we obtain
	\begin{equation*}
	\|\mathcal D_s^j\check{\mathcal J}_n\|_{\mathcal L(\mathcal X)}\leq C_\theta|b_j|Q_n(\theta)^2.
	\end{equation*}
    Similarly, for $h\in H_{2n+1,2n+2}$,
	\begin{equation*}
	\mathcal D_s^j\mathcal A_nh=\int_{2n+1}^{2n+2}\mathcal D_s^j\mathcal J_{r,2n+2}\mathcal B\mathsf P_Nh(r)dr,
	\end{equation*}
    and therefore
	\begin{equation*}
	\|\mathcal D_s^j\mathcal A_n\|_{\mathcal L(H_{2n+1,2n+2};\mathcal X)}+\|\mathcal D_s^j\mathcal A_n^*\|_{\mathcal L(\mathcal X;H_{2n+1,2n+2})}\leq C_\theta|b_j|Q_n(\theta)^2.
	\end{equation*}
    Since $\|\mathcal A_n\|+\|\mathcal A_n^*\|\leq C_\theta Q_n(\theta)$ and $\|\check{\mathcal J}_n\rho_{2n+1}\|_{\mathcal X}+\|\rho_{2n+1}\|_{\mathcal X}\leq C_\theta Q_n(\theta)^2\|\rho_{2n}\|_{\mathcal X}$, the preceding estimates and \eqref{eq ASF13} imply
	\begin{equation*}
	\|\mathcal D_s^jv_n\|_{H}\leq C_{\beta,\theta}|b_j|Q_n(\theta)^5\|\rho_{2n}\|_{\mathcal X}.
	\end{equation*}
    Hence
	\begin{equation*}
	\sum_{j\in\N^+}\int_{2n+1}^{2n+2}\|\mathcal D_s^jv_n\|_{H}^2ds\leq C_{\beta,\theta}Q_n(\theta)^{10}\|\rho_{2n}\|_{\mathcal X}^2.
	\end{equation*}
    Finally, by \eqref{eq L2H2 bdd}, for every $m\geq1$ and $\kappa>0$, choosing $\theta>0$ sufficiently small gives
	\begin{equation*}
	\E\left(Q_n(\theta)^m|\mathcal F_{2n}\right)\leq C\exp\left(\kappa\|U_{2n}\|_{\mathcal X}^2\right).
	\end{equation*}
    Applying this with $m=10$ and $\kappa=\bar{\gamma}/2$, and using \eqref{eq ASF11}, we obtain
	\begin{align}\label{eq ASF16}
	\sum_{n\in\N}\sum_{j\in\N^+}\int_{2n+1}^{2n+2}\E\|\mathcal D_s^jv_n\|_{H}^2ds&\leq C_{\beta,\theta}\sum_{n\in\N}\E\left(Q_n(\theta)^{10}\|\rho_{2n}\|_{\mathcal X}^2\right)\leq C_{\beta,\theta}\sum_{n\in\N}\E\left(e^{\frac{\bar{\gamma}}{2}\|U_{2n}\|_{\mathcal X}^2}\|\rho_{2n}\|_{\mathcal X}^2\right)\notag\\
	&\leq C\exp\left(\gamma\|U_0\|_{\mathcal X}^2\right).
	\end{align}
    Combining \eqref{eq ASF14}, \eqref{eq ASF15}, and \eqref{eq ASF16}, we get
	\begin{equation*}
	\sup_{t\geq0}\E\left|\int_0^tv(s)dW(s)\right|^2\leq C\exp\left(\gamma\|U_0\|_{\mathcal X}^2\right),
	\end{equation*}
    which proves \eqref{eq v3}. This completes the proof of Lemma \ref{lemma ASF1}.

\subsection{Auxiliary proofs for control problems}

In this appendix, we prove two auxiliary estimates used in the stabilization analysis of Section \ref{Sec 3}.

    \begin{lemma}\label{lemma theta}
    For any $g$ satisfying \eqref{eq g}, the weight $\Theta_g$ defined by \eqref{eq weight} extends by zero to a function in $C^\infty([0,1])$ and satisfies
	\begin{equation}\label{eq theta1}
	\partial_r^k\Theta_g(0)=\partial_r^k\Theta_g(1)=0,\quad k\in\N.
	\end{equation}
    Moreover, there exists a constant $C>0$, depending only on $D$, $\omega$ and $\nu$, such that
	\begin{equation}\label{eq theta2}
	|\Theta_g'(r)|^2\leq Cs_g^3\lambda_g^4e^{-2s_g\alpha(x,r)}\xi(x,r)^3,\quad(x,r)\in Q.
	\end{equation}
    \end{lemma}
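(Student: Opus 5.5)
The plan is to compute $\Theta_g$ in closed form. The point is that the space-dependence drops out after taking min and max, so everything reduces to one-variable calculus in $r$.

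\textbf{Step 1: closed form of $\Theta_g$.} Write $\ell(r):=r^4(1-r)^4$ and $L:=e^{\lambda_gK}$. Since $0\le\vartheta\le K$ on $\overline D$ (with $\vartheta=0$ on $\partial D$ and $\max\vartheta=K$), we have
\begin{equation*}
\alpha_*(r)=\frac{L^2-L}{\ell(r)},\qquad
\alpha^*(r)=\frac{L^2-1}{\ell(r)},\qquad
\xi^*(r)=\frac{L}{\ell(r)} .
\end{equation*}
A direct computation then gives
\begin{equation*}
-4\alpha_*+2\alpha^*=-\frac{2(L-1)^2}{\ell},
\end{equation*}
and therefore
\begin{equation*}
\Theta_g(r)=s_g^{15/2}\lambda_g^8L^{15/2}\,\ell(r)^{-15/2}\exp\bigl(-a/\ell(r)\bigr),\qquad a:=2s_g(L-1)^2>0 .
\end{equation*}
The inequality $a>0$ needs $L>1$, which holds since $\lambda_g,K>0$.

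\textbf{Step 2: smoothness and flatness, i.e.\ \eqref{eq theta1}.} On $(0,1)$, $\Theta_g$ is smooth and positive. Every derivative $\partial_r^k\Theta_g$ has the form $P_k(\ell^{-1},\ell',\dots,\ell^{(k)})\,e^{-a/\ell}$, where $P_k$ is a polynomial; this follows by induction using $\frac{d}{dr}e^{-a/\ell}=a\ell'\ell^{-2}e^{-a/\ell}$. As $r\to0^+$ or $r\to1^-$ we have $\ell\to0^+$, and $y^me^{-a y}\to0$ as $y\to\infty$ for every $m$. Hence each derivative tends to $0$ at the endpoints. So the zero extension is $C^\infty([0,1])$ with all derivatives vanishing at $0$ and $1$. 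Boundedness on $[0,1]$ also follows, which is what is used in the proof of Theorem~\ref{thm control1}.

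\textbf{Step 3: the derivative bound \eqref{eq theta2}.} This is the main computation. Differentiating the closed form gives
\begin{equation*}
\Theta_g'=\Theta_g\,\ell'\Bigl(\frac{a}{\ell^2}-\frac{15}{2\ell}\Bigr),\qquad |\ell'|\le C .
\end{equation*}
For the right-hand side, we use $e^{-2s\alpha(x,r)}\ge e^{-2s\alpha^*(r)}$ and $\xi(x,r)\ge\ell(r)^{-1}$. It then suffices to show
\begin{equation*}
\Theta_g^2\Bigl(\frac{a}{\ell^2}+\frac1\ell\Bigr)^2\le C s_g^3\lambda_g^4\,\ell^{-3}e^{-2s_g\alpha^*}.
\end{equation*}
Collecting exponentials, the left side carries the factor $\exp(-2a/\ell+2s_g\alpha^*)$. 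Its exponent equals
\begin{equation*}
\frac{2s_g}{\ell}\bigl[(L^2-1)-2(L-1)^2\bigr]=-\frac{2s_g(L-1)(L-3)}{\ell}.
\end{equation*}
This is the step I expect to be the main obstacle, because it forces a sign condition. The exponent is negative only when $L>3$. Once $L\ge4$, it is bounded above by $-s_gL^2/(2\ell)$. So I would use that $\lambda_g\ge\widehat\lambda$, and enlarge $\widehat\lambda$ in Proposition~\ref{prop Carleman} if needed (depending only on $D$, $\omega$, $\nu$), so that $e^{\widehat\lambda K}\ge4$.

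With this in hand, all the remaining factors are polynomial: powers of $s_g$, $\lambda_g\le L$, $L$, and $\ell^{-1}$. Setting $y:=s_gL^2/\ell\ge1$ (recall $s_g\ge\widehat s\ge1$ after enlarging $\widehat s$), each such factor is bounded by a power of $y$. Since $y^me^{-y/2}\le C_m$, the polynomial factors are absorbed into the decaying exponential. This gives \eqref{eq theta2} with $C$ depending only on $D$, $\omega$ and $\nu$.
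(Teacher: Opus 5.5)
Your proposal is correct and follows essentially the paper's route: the closed form in terms of $\ell=r^4(1-r)^4$ and $L=e^{\lambda_gK}$, flatness at the endpoints coming from $e^{-a/\ell}$, and discarding the $x$-dependence via $e^{\lambda_g\vartheta}\ge1$. This turns the exponent into $-2s_g(L-1)(L-3)/\ell$, which is the paper's $B(x)\ge cE^2$ after enlarging $\widehat\lambda$, and all polynomial factors are then absorbed into $e^{-c\,s_gL^2/\ell}$.

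One harmless slip: $L\ge4$ does not give the bound $-s_gL^2/(2\ell)$, since at $L=4$ one has $2(L-1)(L-3)=6<8=L^2/2$. Either require $L\ge5$, or use the weaker bound $-s_gL^2/(8\ell)$, which does hold for $L\ge4$. Likewise, $\lambda_g\le L$ needs $\widehat\lambda$ large (or a $K$-dependent constant), and the half-integer power $\ell^{-15/2}$ should be included in your Step~2 description of the derivatives; both are cosmetic fixes.
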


    \begin{proof}
    Set $\ell(r)=r^4(1-r)^4$ and $E=e^{\lambda_gK}$. Since the minimum and maximum of $\vartheta$ on $\overline D$ are $0$ and $K$, respectively, the definitions of the weights give
	\begin{equation*}
	\alpha_*(r)=\frac{E^2-E}{\ell(r)},\quad\alpha^*(r)=\frac{E^2-1}{\ell(r)},\quad\xi^*(r)=\frac{E}{\ell(r)}.
	\end{equation*}
    Substituting these expressions into \eqref{eq weight}, we obtain
	\begin{equation*}
	\Theta_g(r)=s_g^{15/2}\lambda_g^8E^{15/2}\ell(r)^{-15/2}\exp\left(-\frac{2s_g(E-1)^2}{\ell(r)}\right),\quad r\in(0,1).
	\end{equation*}
    The exponential factor decays faster than any negative power of $\ell(r)$ as $r$ tends to $0$ or $1$. Hence this expression extends by zero to a smooth function on $[0,1]$, and all its derivatives vanish at the endpoints. This proves \eqref{eq theta1}.

    Differentiating the preceding expression, we have
	\begin{equation*}
	\Theta_g'(r)=\Theta_g(r)\frac{\ell'(r)}{\ell(r)}\left(\frac{2s_g(E-1)^2}{\ell(r)}-\frac{15}{2}\right).
	\end{equation*}
    For $x\in\overline D$, set
	\begin{equation*}
	B(x)=4(E-1)^2-2\left(E^2-e^{\lambda_g\vartheta(x)}\right).
	\end{equation*}
    Since $|\ell'|^2\leq C\ell^{3/2}$ and $e^{\lambda_g\vartheta(x)}\geq1$, the quotient of $|\Theta_g'|^2$ by $s_g^3\lambda_g^4e^{-2s_g\alpha}\xi^3$ is bounded by $R_1+R_2$, where
	\begin{align*}
	R_1&\leq Cs_g^{14}\lambda_g^{12}E^{15}(E-1)^4\ell(r)^{-29/2}\exp\left(-\frac{s_gB(x)}{\ell(r)}\right),\\
	R_2&\leq Cs_g^{12}\lambda_g^{12}E^{15}\ell(r)^{-25/2}\exp\left(-\frac{s_gB(x)}{\ell(r)}\right).
	\end{align*}
    After increasing $\widehat\lambda$ if necessary, we have
	\begin{equation*}
	B(x)=2E^2-8E+4+2e^{\lambda_g\vartheta(x)}\geq cE^2,\quad x\in\overline D.
	\end{equation*}
    We now use
	\begin{equation*}
	\sup_{\rho>0}\rho^{-p}e^{-a/\rho}\leq C_pa^{-p},\quad a>0,
	\end{equation*}
    with $a=cs_gE^2$. Since $(E-1)^4\leq E^4$ and $s_g=\widehat sE^8$, it follows that
	\begin{align*}
	R_1\leq C\lambda_g^{12}s_g^{14}E^{19}(s_gE^2)^{-29/2}\leq C\lambda_g^{12}E^{-14},\quad R_2\leq C\lambda_g^{12}s_g^{12}E^{15}(s_gE^2)^{-25/2}\leq C\lambda_g^{12}E^{-14}.
	\end{align*}
    Finally, since $E=e^{\lambda_gK}$, the quantity $\lambda_g^{12}E^{-14}$ is uniformly bounded for $\lambda_g\geq\widehat\lambda$. This proves \eqref{eq theta2}.
    \end{proof}

    \begin{lemma}\label{lemma PN}
    There exists a constant $C>0$, depending only on $\omega$, such that for any $N\in\N^+$ and $f\in H^1(\omega;\R^2)$,
	\begin{equation}\label{eq PN}
	\|(I-\mathsf P_N)f\|_{L^2(\omega)}\leq C\lambda_{N+1}^{-1/4}\|f\|_{H^1(\omega)},
	\end{equation}
    where $\mathsf P_N$ denotes the orthogonal projection in $L^2(\omega;\R^2)$ onto ${\rm span}\{\psi_j:1\leq j\leq N\}$.
    \end{lemma}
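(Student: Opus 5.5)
The plan is a boundary-layer cut-off argument. The only delicate point is that $f\in H^1(\omega;\R^2)$ need not vanish on $\partial\omega$, so $f$ generally does not lie in $D((-\Delta_\omega)^{1/4})$. The naive spectral bound $\|(I-\mathsf P_N)f\|\le\lambda_{N+1}^{-1/4}\|(-\Delta_\omega)^{1/4}f\|$ is therefore unavailable. Instead I would split $f$ into an $H_0^1$ part, for which the Dirichlet spectral gap gives the full rate $\lambda_{N+1}^{-1/2}$, and a thin boundary-layer remainder that is small in $L^2$. I would then optimize the width of the layer against $N$.

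\emph{Step 1 (trivial regime).} Since $\|(I-\mathsf P_N)f\|\le\|f\|$, the estimate \eqref{eq PN} is immediate whenever $\lambda_{N+1}\le\lambda_*$ for a fixed threshold $\lambda_*$ depending only on $\omega$. So I assume $\lambda_{N+1}$ is large and set $\delta:=\lambda_{N+1}^{-1/2}\le\delta_0$. Here $\delta_0$ is small enough that the tubular neighbourhood $\omega_\delta:=\{x\in\omega:\dist(x,\partial\omega)<\delta\}$ admits smooth normal coordinates.

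\emph{Step 2 (layer estimate and cut-off).} The key inequality is
\begin{equation*}
\int_{\omega_\delta}|f|^2dx\le C\delta\|f\|_{H^1(\omega)}^2,\qquad 0<\delta\le\delta_0 .
\end{equation*}
I would prove it first for smooth $f$ and conclude by density. In normal coordinates $(\sigma,t)\in\partial\omega\times(0,\delta_0)$, the one-dimensional bound $\sup_t|f(\sigma,t)|^2\le C\int_0^{\delta_0}(|f|^2+|\partial_tf|^2)dt$ holds. Integrating it over $t\in(0,\delta)$ and $\sigma\in\partial\omega$ gives the inequality, since the Jacobian is bounded above and below. Next I take $\chi_\delta\in C^\infty(\overline\omega;[0,1])$ with $\chi_\delta=0$ on $\omega_{\delta/2}$, $\chi_\delta=1$ off $\omega_\delta$, and $|\nabla\chi_\delta|\le C\delta^{-1}$, and put $f_\delta=\chi_\delta f\in H_0^1(\omega;\R^2)$. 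By the layer estimate,
\begin{equation*}
\|f-f_\delta\|\le C\delta^{1/2}\|f\|_{H^1(\omega)},\qquad\|\nabla f_\delta\|\le\|\nabla f\|+C\delta^{-1}\Big(\int_{\omega_\delta}|f|^2\Big)^{1/2}\le C\delta^{-1/2}\|f\|_{H^1(\omega)} .
\end{equation*}

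\emph{Step 3 (spectral estimate and optimization).} For $g\in H_0^1(\omega;\R^2)$, expanding in the eigenbasis $\{\psi_j\}$ gives $\|(I-\mathsf P_N)g\|^2=\sum_{j>N}\langle g,\psi_j\rangle^2\le\lambda_{N+1}^{-1}\sum_j\lambda_j\langle g,\psi_j\rangle^2=\lambda_{N+1}^{-1}\|\nabla g\|^2$. Since $\|I-\mathsf P_N\|\le1$,
\begin{equation*}
\|(I-\mathsf P_N)f\|\le\|f-f_\delta\|+\lambda_{N+1}^{-1/2}\|\nabla f_\delta\|\le C\left(\delta^{1/2}+\lambda_{N+1}^{-1/2}\delta^{-1/2}\right)\|f\|_{H^1(\omega)} .
\end{equation*}
With $\delta=\lambda_{N+1}^{-1/2}$ both terms equal $C\lambda_{N+1}^{-1/4}\|f\|_{H^1(\omega)}$, which is \eqref{eq PN}.

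The main obstacle is Step 2, namely the layer estimate with the sharp factor $\delta$. This is exactly what compensates for the missing boundary vanishing. Its proof only uses the smoothness of $\partial\omega$ and the one-dimensional trace inequality, so I expect no real difficulty beyond setting up the tubular coordinates uniformly.
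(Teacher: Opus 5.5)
Your proposal is correct and follows essentially the same argument as the paper. The paper also uses a cut-off $\chi_\varepsilon f\in H_0^1(\omega)$ at scale $\varepsilon$, the boundary-collar estimate $\int_{\{d<2\varepsilon\}}|f|^2\le C\varepsilon\|f\|_{H^1(\omega)}^2$, the Dirichlet spectral bound $\|(I-\mathsf P_N)f_\varepsilon\|^2\le\lambda_{N+1}^{-1}\|\nabla f_\varepsilon\|^2$, and the choice $\varepsilon\sim\lambda_{N+1}^{-1/2}$. The only cosmetic difference is that the paper avoids your case split by taking $\varepsilon=c_0\lambda_{N+1}^{-1/2}$ with $c_0<\varepsilon_0\lambda_1^{1/2}$, which keeps the layer inside the collar for every $N$.
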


    \begin{proof}
    Set $d(x)=\dist(x,\partial\omega)$. Since $\partial\omega$ is smooth, there exists $\varepsilon_0\in(0,1)$ such that for any $\varepsilon\in(0,\varepsilon_0)$, we can choose $\chi_\varepsilon\in C_c^\infty(\omega)$ satisfying
	\begin{equation*}
	\chi_\varepsilon=0\quad{\rm if}\ d\leq\varepsilon,\quad\chi_\varepsilon=1\quad{\rm if}\ d\geq2\varepsilon,\quad|\nabla\chi_\varepsilon|\leq C\varepsilon^{-1}.
	\end{equation*}
    Normal coordinates in a boundary collar and the trace theorem give
	\begin{align*}
	\int_{\{d<2\varepsilon\}}|f|^2dx\leq C\varepsilon\|f\|_{L^2(\partial\omega)}^2+C\varepsilon^2\|\nabla f\|_{L^2(\{d<2\varepsilon\})}^2\leq C\varepsilon\|f\|_{H^1(\omega)}^2.
	\end{align*}
    Set $f_\varepsilon=\chi_\varepsilon f$. Then $f_\varepsilon\in H_0^1(\omega;\R^2)$ and
	\begin{align*}
	\|f-f_\varepsilon\|_{L^2(\omega)}\leq C\varepsilon^{1/2}\|f\|_{H^1(\omega)},\quad\|\nabla f_\varepsilon\|_{L^2(\omega)}\leq C\varepsilon^{-1/2}\|f\|_{H^1(\omega)}.
	\end{align*}
    Moreover, the spectral theorem yields
	\begin{align*}
	\|(I-\mathsf P_N)f_\varepsilon\|_{L^2(\omega)}^2&=\sum_{j>N}|\langle f_\varepsilon,\psi_j\rangle_{L^2(\omega)}|^2\leq\lambda_{N+1}^{-1}\sum_{j\in\N^+}\lambda_j|\langle f_\varepsilon,\psi_j\rangle_{L^2(\omega)}|^2=\lambda_{N+1}^{-1}\|\nabla f_\varepsilon\|_{L^2(\omega)}^2.
	\end{align*}
    Consequently,
	\begin{equation*}
	\|(I-\mathsf P_N)f\|_{L^2(\omega)}\leq C\left(\varepsilon^{1/2}+\lambda_{N+1}^{-1/2}\varepsilon^{-1/2}\right)\|f\|_{H^1(\omega)}.
	\end{equation*}
    Fix $c_0\in(0,\varepsilon_0\lambda_1^{1/2})$ and take $\varepsilon=c_0\lambda_{N+1}^{-1/2}$. Then $\varepsilon\in(0,\varepsilon_0)$ for every $N\in\N^+$, and the preceding estimate gives \eqref{eq PN}.
    \end{proof}

    \subsection{Index of symbols}

In this appendix, we collect the most used symbols of the paper.

{\footnotesize
\begin{longtable}{@{}r@{\hspace{0.8em}}|@{\hspace{0.8em}}p{0.72\textwidth}@{}}
    \hline
    Basic notation & Meaning \\
    \hline
    $D$, $\partial D$, $\omega$ & 2D domain, its smooth boundary, and the localized forcing region $\omega\Subset D$ \\
    $n$ & outward unit normal vector on $\partial D$ \\
    $\|\cdot\|$, $\langle\cdot,\cdot\rangle$ & $L^2$-norm and inner product on $D$ or $\omega$, depending on the context \\
    $\mathcal L(X;Y)$, $\mathcal L(X)$ & bounded linear maps between Banach spaces, with $\mathcal L(X)=\mathcal L(X;X)$ \\
    $B_X(x,r)$ & closed ball in the Banach space $X$ centered at $x$ with radius $r$ \\
    $\mathcal H$, $\mathcal H^1$ & $\mathcal H=\left\{u\in L^2(D;\R^2):\dvg u=0\ \text{in }D,\;u\cdot n=0\ \text{on }\partial D\right\}$, $\mathcal H^1=H^1(D;\R^2)\cap \mathcal H$\\
    $\mathcal H^2$ & $H^2(D;\R^2)\cap \mathcal H^1$ with $\curl u=0$ on $\partial D$ \\
    $(\mathcal X,\|\cdot\|_{\mathcal X})$ & Hilbert phase space $\mathcal X=\mathcal H^1\times L^2(\omega;\R^2)$ with norm $\|(u,\eta)\|_{\mathcal X}^2=\|u\|_{\mathcal H^1}^2+\|\eta\|^2$ \\
    $\widehat{\mathcal X}$, $\|\cdot\|_{\widehat{\mathcal X}}$ & equivalent Hilbert structure of $\mathcal X$; $\|(u,\eta)\|_{\widehat{\mathcal X}}^2=\|\curl u\|^2+K\|\eta\|^2$ \\
    $\Pi$ & Leray projector from $L^2(D;\R^2)$ onto $\mathcal H$ \\
    $A$ & slip Stokes operator $Au=-\nu\Pi\Delta u$ \\
    $B(u,v)$ & Navier--Stokes bilinear term $\Pi((u\cdot\nabla)v)$ \\
    $-\Delta_\omega$ & Dirichlet Laplacian on $L^2(\omega;\R^2)$ \\
    $\{\psi_j\}_{j\in\N^+}$, $\{\lambda_j\}_{j\in\N^+}$ & eigenvectors and eigenvalues of $-\Delta_\omega$ \\
    $\mathsf P_N$ & orthogonal projection in $L^2(\omega;\R^2)$ onto ${\rm span}\{\psi_j:1\leq j\leq N\}$ \\
    $C$, $C_\gamma$ & generic positive constants, with $C_\gamma$ allowed to depend on $\gamma$ \\
    \hline
    Stochastic system & \\
    \hline
    $U(t,U_0,W)$, $U_t,(u_t,\eta_t)$ & solution of \eqref{eq NS},\eqref{eq eta} \\
    $u$, $\pi$, $\eta$ & velocity, pressure, and localized heat component \\
    $\eta^{s}$, $\mu_s$ & localized stationary heat process and its one-time law; see \eqref{eq eta_s} \\
    $\{\hat\beta_j\}_{j\in\N^+}$, $\{\beta_j\}_{j\in\N^+}$ & independent two-sided and one-sided standard Brownian motions, respectively \\
    $\{b_j\}_{j\in\N^+}$, $\widehat b_N$ & real coefficients and $\widehat b_N=\max_{1\leq j\leq N}|b_j|^{-1}$ \\
    $W(t)$ & cylindrical Brownian motion $W(t)=\sum_{j\in\N^+}\beta_j(t)\psi_j$ \\
    $\Lambda$ & diagonal operator $\Lambda h=\sum_{j\in\N^+}b_j\langle h,\psi_j\rangle_{L^2(\omega)}\psi_j$ \\
    $H$, $H_{s,t}$ & $H=L^2(\R^+;L^2(\omega;\R^2))$ and $H_{s,t}=L^2(s,t;L^2(\omega;\R^2))$ \\
    $\mathcal D$, $\mathbb D^{1,2}(\mathcal X)$ & Malliavin derivative and Malliavin--Sobolev space; see \eqref{eq D def} \\
    $P_t$, $P_t^*$ & Markov semigroup and its dual associated with the coupled system \\
    $B_b(\mathcal X)$, $\mathcal B(\mathcal X)$, $\mathcal P(\mathcal X)$ & bounded Borel functions, Borel sets, and Borel probability measures on $\mathcal X$ \\
    $\mu$, $\mu_*$ & invariant probability measures in Theorems \ref{thm 1} and \ref{thm 2}, respectively \\
    $\mathcal O_{\mathcal H^1,\gamma}$, $\mathcal O_{\mathcal X,\gamma}$ & weighted observable classes with norms $\|\cdot\|_{\mathcal H^1,\gamma}$ and $\|\cdot\|_{\mathcal X,\gamma}$; see \eqref{eq mixing1}, \eqref{eq mixing2} \\
    \hline
    Linearized/control operators &  \\
    \hline
    $V_s=(v_s,p_s)$ & initial tangent vector in $\mathcal X$ at time $s$ \\
    $\mathcal J_{s,t}$ & homogeneous linearized flow; see \eqref{eq J} \\
    $\mathcal J_{s,t}^*$ & adjoint of $\mathcal J_{s,t}$ on $\mathcal X$ \\
    $\mathcal J^{(2)}_{s,t}$ & second derivative of the stochastic flow; see \eqref{eq J-2} \\
    $\mathcal B$, $\mathcal B^*$ & noise injection operator $\mathcal Bh=(0,\Lambda h)$ and its adjoint \\
    $\mathcal A_{s,t}$ & Malliavin control map $\mathcal A_{s,t}h=\int_s^t\mathcal J_{r,t}\mathcal Bh(r)dr$ \\
    $\mathcal A_{s,t,N}$ & truncated control map $\mathcal A_{s,t}\mathsf P_N$ \\
    $\mathcal M_{s,t,N}$ & truncated Malliavin matrix $\mathcal A_{s,t,N}\mathcal A_{s,t,N}^*$ \\
    $\mathcal J_{s,t,g}$ & deterministic homogeneous linearized coupled flow with reference velocity $g$ \\
    $\mathcal R_{N,g}$ & deterministic low-frequency control map before inserting $\Lambda$ \\
    $\mathcal A_{N,g}$, $\mathcal G_{N,g}$ & deterministic noise-compatible control map and Gramian $\mathcal A_{N,g}\mathcal A_{N,g}^*$ \\
    
    \hline
    Stabilization weights &  \\
    \hline
    $Q$ & space-time cylinder $D\times(0,1)$ \\
    $g$ & reference velocity in the deterministic linearized equations satisfying \eqref{eq g} \\
    $G_0$, $G_1$ & $G_0=\|g\|_{L^\infty(Q)}$ and $G_1=\|\partial_rg\|_{L^2(0,1;L^4(D))}$ \\
    $\alpha$, $\xi$ & Carleman weights used in Section 3.1 \\
    $\alpha_*$, $\alpha^*$, $\xi^*$ & extrema of the Carleman weights over $\overline D$ \\
    $\mathcal I(s,\lambda;\varphi)$ & weighted Carleman functional for the adjoint velocity $\varphi$ \\
    $\lambda_g$, $s_g$ & Carleman parameters fixed by the reference field $g$ \\
    $\Theta_g$ & time weight in observability; see \eqref{eq weight} \\
    $\mathcal C_g$ & constant depending on the reference field through $G_0$ and $G_1$ \\
    \hline
\end{longtable}
}

\vspace{4mm}

\noindent\textbf{Acknowledgement}\; 
We thank Víctor Hernández-Santamaría, Kévin Le Balc'h and  Liliana Peralta for informing us about their ongoing work, for sharing their perspective on the problem, and for stimulating discussions. Ziyu Liu is partially  supported by Fundamental Research Funds for the Central Universities FRF-TP-26-054. Shengquan Xiang is partially  supported by NSFC 12571474. Zhifei Zhang is partially supported by NSFC 12288101.

    \bibliographystyle{plain}
    \bibliography{References}

    \end{document}